\numberwithin{equation}{section}
\theoremstyle{plain}
\newtheorem{theorem}{Theorem}[section]
\newtheorem{proposition}[theorem]{Proposition}
\theoremstyle{definition}
\newtheorem{definition}[theorem]{Definition}
\newtheorem{remark}[theorem]{Remark}
\definecolor{ddorange}{rgb}{1,0.5,0}
\definecolor{ddcyan}{rgb}{0,0.2,1.0}
\newcommand{\e}{\varepsilon}
\newcommand{\mrestr}{%
	\,\raisebox{-.127ex}{\reflectbox{\rotatebox[origin=br]{-90}{$\lnot$}}}\,%
}
\newcommand{\numberset}{\mathbb}
\newcommand{\R}{\numberset{R}}
\newcommand{\N}{\numberset{N}}
\newcommand{\Z}{\numberset{Z}}
\title[Discrete and Continuum Area-Preserving Mean-Curvature Flow of Rectangles]{Discrete and Continuum Area-Preserving Mean-Curvature Flow of Rectangles}
\author[M. Cicalese]
{M. Cicalese}
\address[Marco Cicalese]{
	Zentrum Mathematik - M7, Technische Universitat M\"unchen, Boltzmannstrasse 3, 85748 Garching, Germany	
}
\email[M. Cicalese]{cicalese@ma.tum.de}
\author[A. Kubin]
{A. kubin}
\address[Andrea Kubin]{
	Zentrum Mathematik - M7, Technische Universitat M\"unchen, Boltzmannstrasse 3, 85748 Garching, Germany	
}
\email[A. Kubin]{andrea.kubin@tum.de}
\begin{document}

	\begin{abstract}	
We investigate the area-preserving mean-curvature-type motion of a two-dimensional lattice crystal obtained by coupling constrained minimizing movements scheme introduced by Almgren, Taylor and Wang in \cite{ATW} with a discrete-to-continuous analysis. We first examine the continuum counterpart of the model and establish the existence and uniqueness of the flat flow, originating from a rectangle. Additionally, we characterize the governing system of ordinary differential equations. Subsequently, in the atomistic setting, we identify geometric properties of the discrete-in-time flow and describe the governing system of finite-difference inclusions. Finally, in the limit where both spatial and time scales vanish at the same rate, we prove that a discrete-to-continuum evolution is expressed through a system of differential inclusions which does never reduce to a system of ODEs.
	\end{abstract}

\maketitle

	{\bf Keywords:}  Discrete energies, anisotropic perimeter, area preserving motion by curvature\\
	{\bf MSC:} 82C24, 35K60, 74A50, 82B24

	\tableofcontents

	\section*{Introduction}
	
Comprehending the interplay between the energetic and kinematic aspects of crystal deformation over time is a formidable task, presenting significant challenges in the quest for simplified models that can capture the essential features of this intricate problem. The complexity is compounded by the emergence of geometrically defected microstructures, microscopic atomic arrangements whose flow deviate from the simple geometric description of the motion observed at mesoscopic or continuum scales. Establishing meaningful connections between microscopic and macroscopic observables in this context proves to be a big challenge. \\
In this paper we provide a simple geometric answer to the problem starting from a well-known microscopic lattice model and considering at several spatial and temporal scales the motion of a two-dimensional crystal which preserves its area along the flow. We first investigate the problem at a continuum level; i.e., when the crystal is modelled as a continuum. In this setting the evolution of the crystal is obtained implementing a time-discrete iterative minimization scheme proposed by Almgren, Taylor and Wang in \cite{ATW} and by Luckhaus and Sturzenhecker in \cite{LS}, here modified to include the conservation of area constraint. In this case, passing to the limit as the time step vanishes, we find a system of forced mean-curvature equations governing the evolution of the crystal. 
Drawing from our experience in addressing the continuum problem in the second part of the paper we turn our attention to the atomistic setting. In this case the crystal is modelled as a finite union of point masses occupying the position of a subset of the square lattice and the minimizing movement scheme above needs to be adapted to this framework (see \cite{BGN} for the unconstrained problem and \cite{BS} for an introduction to the geometric motion of lattice systems). In this framework we first prove several geometric properties of the solution of the iterative scheme when the number of particles is kept fixed in the time-discrete iteration. Then, we provide existence of the flow and characterize it in terms of a system of finite difference inclusions when the lattice spacing and the time step vanish at the same rate. We finally conjecture the discrete-to-continuum evolution of the crystal and propose a regularized version of the flow in which case the convergence to an explicit system of ordinary differential inclusions can be proved. We emphasize that while most of our arguments could be extended to higher dimensions or to more general lattices, albeit at the cost of significantly increased computational complexity, for the sake of readers' convenience, we limit our consideration to the two-dimensional case.\\

The simplest geometric model of a two-dimensional crystal assumes its atoms to occupy some of the nodes of a simple periodic lattice that we fix to be $\Z^2$. The optimal shape of the crystal is then obtained by minimizing an energy depending on the position of the atoms in the lattice. Energies with geometric flavours have been proposed by Cahn and coauthors in several papers (see for instance \cite{Taylor1, Taylor2}). The simplest energy functional one can think about is a perimeter-like energy obtained by referring to its ground states a classical nearest-neighbors ferromagnetic Ising energy (see \cite{ABC, CDL} and refer to \cite{ABCS} for a more comprehensive variational treatment of spin systems). The latter can be written as
\begin{equation}\label{intro-en}
P_\e(u)=\frac14\sum_{n.n.}\e(u^i-u^j)^2,
\end{equation}
where $u:i\in\e\Z^2\mapsto u^i\in\{-1,+1\}$ is an Ising variable and the sum is extended to all nearest neighboring $(n.n.)$ points of the lattice $\e\Z^2$, that is to those $i,j\in\e\Z^2$ such that $|i-j|=\e$. In this model the atoms of the crystal are those lattice points $i\in\e\Z^2$ where $u^i=1$ and the region occupied by the crystal can be thought of as the union of those elementary cells of the dual lattice of $\e\Z^2$ centred at those points, namely $E:=\bigcup_{\{i\in{\e\Z^2}:\,u^i=1\}}(i+\e[-1/2, 1/2)^2)$. In the rest of this introduction we generically refer to $E$ as to a $\e\Z^2$ crystal. With this identification between functions and sets the energy in \eqref{intro-en} (also known as {\it edge perimeter} energy) can be regarded as defined on sets of finite perimeter. In this sense the energy of the $\e\Z^2$ crystal $E$ is defined by $P_\e(E):=P_\e(u)$ and one can carry out the variational coarse-graining of such energies as $\e\to 0$ in terms of $\Gamma$-convergence with respect to the $L^1$ distance between sets. In \cite{ABC} (see also \cite{ACR, BC, BP, BSpenrose} for generalizations covering also the case of non-periodic lattices) it has been proved that, as $\e\to 0$, the  $\Gamma$-limit of the functionals $P_\e$ is the anisotropic perimeter functional defined on sets of finite perimeter as
\begin{equation*}
\mathrm{P}_{|\cdot|_1}(E):= \int_{\partial^* E} |\nu_E|_1 d \mathcal{H}^{1},
\end{equation*}
where for $\nu=(\nu_1,\nu_2)$, $|\nu|_1:=|\nu_1|+|\nu_2|$ is the $1$-norm of $\nu$. Note that in the functional above the symmetry properties of the anisotropic surface tension $|\cdot|_1$ reflects the effect of geometric symmetries of the lattice $\Z^2$ on the continuum limit. \\


\noindent {\bf The continuum setting}\\
According to the preceding discussion, within the continuum framework, the area-preserving evolution by mean curvature of a crystal will be modelled as the solution of the following iterative scheme proposed in \cite{ATW, LS}, here modified to enforce the area constraint at each minimization step (see \cite{BCCN} for a similar result using the MBO scheme in the case of convex sets).  Let us assume that the initial set is a rectangle, say $R$, of unitary area $|R|=1$ and let us fix a time step $\tau>0$. We introduce the functional $ \mathcal{D}(E,\,F):= \int_{E \Delta F} d_{\infty}(z,\, \partial F) \, dz$, where $d_\infty(z,\,\partial F):= \inf \left\{\vert z-\hat{z} \vert_{\infty} \colon \,\hat{z} \in \partial F\right\}$ and $|z|_{\infty}=\max\{|z_1|,|z_2|\}$ for $z=(z_1,z_2)$. The functional $ \mathcal{D}(E,\,F)$ is interpreted as the energy cost dissipated for the crystal to evolve from the shape $E$ to the shape $F$. According to Definition \ref{12092023def1}, an approximate ${\mathrm P}_{|\cdot|_1}$ area-preserving mean-curvature flow consists of a family of sets $\{E_{t}^{(\tau)}\}_{t\geq 0}$, where
$ E_{t}^{(\tau)}:= E_{k \tau}^{(\tau)} \, \text{for any } t \in [k \tau,\, (k+1)\tau)$ and where $\{ E_{k\tau}^{(\tau)}\}_{k \in \mathbb{N}}$ are obtained by solving the following iterative scheme 
\begin{equation}\label{introscheme}
E_0^{(\tau)}=R, \qquad
E_{k \tau}^{(\tau)}\in\underset{E \subset \mathbb{R}^2,\, \vert E \vert=1}{\arg \min} \left\{\mathrm{P}_{\vert \cdot \vert_1}(E)+\frac{1}{\tau} \mathcal{D}(E,\,E_{(k-1)\tau}^{(\tau)}) \right\},\quad  k\geq 1. 
\end{equation}
The flat $P_{|\cdot|_1}$ area-preserving mean-curvature flow is then obtained passing to the limit as $\tau\to 0$ (see Definition \ref{defflat}).
We emphasize that our iterative scheme requires the area to remain constant at each iteration step. Hence, the discrete evolution is area preserving, a requirement that translates the idea that the time step $\tau$ holds a physical meaning. Specifically, it  is larger or equal than  the timescale required for energy relaxation to occur. If instead the time-discrete evolution were merely regarded as an approximation of the time-continuum one, we could relax the area constraint and make it active only in the limit as $\tau\to 0$ as it is the case in \cite{MSS} (see also \cite{JMPS, MPS}). As it will become evident later, keeping the area constraint at each time step significantly complicates our analysis in the continuum setting and even more in the lattice setting. This complexity is also the primary reason why we confine our investigation to initial sets with rectangular shapes, leaving the case of more general initial shapes for future investigations. In Theorem \ref{thm12092023} we prove that for every $R \subset \R^2$ rectangle with $\vert R \vert=1$ there exists a unique area-preserving $P_{|\cdot|_1}$ flat flow which is given by a family of rectangles $R(t)=[-\frac{a(t)}{2},\frac{a(t)}{2}]\times[-\frac{b(t)}{2},\frac{b(t)}{2}]$ such that $R(0)=R$. Moreover their side lengths $a$ and $b$ solve the following system of ODEs:
\begin{equation}\begin{cases}\displaystyle
 \frac{d}{d t} a(t)= -\frac{4}{b(t)}+\frac{8}{a(t)+b(t)},&\\ \\\displaystyle\frac{d}{d t} b(t)= -\frac{4}{ a(t)}+\frac{8}{a(t)+b(t)}. &
 \end{cases}
\end{equation}
As a byproduct of that, in Remark \ref{remexp} we show that $R(t)$ converge exponentially fast to the square $[-\frac{1}{2},\frac{1}{2}]\times[-\frac{1}{2},\frac{1}{2}]$. The proof of Theorem \ref{thm12092023} is obtained combining several results. In Proposition \ref{steinersim} and in Proposition \ref{decresdisp} we prove that both perimeter and dissipation decrease under Steiner symmetrization in the coordinate directions. Combining this information with the specific geometry of symmetrized sets, in Theorem \ref{thm-equiv} we prove that in \ref{introscheme} one can reduce the class of competitors to rectangles with fixed baricenter. As a consequence, we eventually find the solution of the approximate flat flow. Making use of uniform Lipschitz estimates obtained in the step-by-step minimization on the side lengths of the evolving rectangles, we eventually conclude the proof of Theorem \ref{thm12092023} taking the limit as the time step vanishes.\\

\noindent {\bf The discrete setting}\\
In the second part of the paper, we examine the evolution of $\varepsilon \mathbb{Z}^2$ crystals. In the case when the area is not preserved, Braides, Gelli and Novaga in \cite{BGN} were the first to combine a variational discrete iterative scheme as the one in \eqref{introscheme} with a discrete-to-continuum procedure to obtain a corse-grained version of the flow (see also \cite{BCY, BMN, BST, BS, CDGM, MN} for similar results in this context). In the present case we are interested to the area-preserving evolution of $\varepsilon \mathbb{Z}^2$ crystals. We denote the class of such crystals with unitary area by $\mathcal{AD}_{\varepsilon}$ and determine their evolution by solving an iterative scheme similar to the one presented in \eqref{introscheme}. Interestingly, due to potential incompatibilities between the lattice spacing and the area constraint, the shape of the solution of the minimization problem may not be a rectangle in successive iterations. Instead, it typically transforms into a quasi-rectangle (see Definition \eqref{quasirettangolo}). Therefore, the problem we address in the discrete setting is formulated as follows: given a quasi-rectangle $QR_\varepsilon$, solve the iterative minimization problem:
\begin{equation}\label{introscheme-eps}
E_0^{(\e, \tau)}=QR_\e, \qquad
E_{k \tau}^{(\e,\tau)}\in{\arg \min} \left\{\mathrm{P}_{\e}(E)+\frac{1}{\tau} D_\e(E,\,E_{(k-1)\tau}^{(\e,\tau)}), E\in{\mathcal AD}_\e \right\},\quad  k\geq 1,.
\end{equation}
Here, the discrete dissipation $D_\varepsilon$ is introduced in \eqref{dissipazioneD_epsilon} and it is a slight variation of the one introduced in the previous paragraph, more suited for the computation needed for $\varepsilon \mathbb{Z}^2$ crystals. Following an approach similar to the one used in the continuum setting, in Section \ref{Section:Steiner-eps}, we introduce a discrete Steiner-like symmetrization technique and use it to prove that if $QR_\varepsilon$ satisfies an additional symmetry condition, referred to as pseudo-axial symmetry (see Definition \eqref{paquasirettangolo}), then the solution of the minimization problem in \eqref{introscheme-eps} also shares the same symmetry property. As a consequence, without loss of generality, one can restrict the class of competitors in \eqref{introscheme-eps} to that of symmetric quasi rectangles. However, within this class of shapes, the process of sending $\varepsilon$ and $\tau$ to zero and characterizing a coarse-grained continuum flow proves to be an exceedingly challenging endeavor, primarily due to the too many degrees of freedom involved. To have an idea of the difficulties arising in this situation, it is worth pointing out that at each step of the iterative scheme one needs to optimize a fourth order polynomial equation (in the unconstrained case considered in \cite{BGN} the polynomial is of second order) whose coefficients depend on the shape of the crystal at the previous step. The optimization necessarily goes through a very long and tedious case-by-case study that we have decided to simplify by reducing the class of competitors as much as possible while still keeping the main features of the general flow. Within this class we consider the following coarse-grained procedure. We take a family $QR_{\varepsilon}$ of quasirectangles with pseudo-axial symmetry and assume that, as $ \varepsilon \rightarrow 0$ $QR_\e$ converge in the Hausdorff distance to the rectangle $[-\frac{a}{2}, \frac{a}{2}]\times [-\frac{b}{2}, \frac{b}{2}]$. We want to prove existence of the flow and obtain some geometric information, and possibly even the governing equations of the flow $E_t^{\varepsilon,\tau}$ with initial datum $QR_\e$ as both $\e$ and $\tau$ vanish. As explained in \cite{BS}, in this kind of problems the interaction between time and space discretization parameters plays a crucial role, the limit motion depending strongly on their relative rate of convergence. If $ \varepsilon \ll \tau$ we expect that, as already observed in \cite{BGN}, the limit flow is identified by first letting $ \varepsilon \rightarrow 0$ and then taking the limit as $\tau\to 0$. At a heuristic level for fixed $\tau$, $P_\varepsilon$ would be substituted by the anisotropic perimeter $\mathrm{P}_{\vert \cdot \vert_1}$ and $D_{\varepsilon}$ by $ \mathcal{D}$. The flat flow would then correspond to the one studied in the continuum setting.  If $\varepsilon\gg \tau$ we expect no motion, namely $E_k^{\varepsilon,\tau}= QR_{\varepsilon}$ at every step $k$. Indeed, a heuristic argument shows that for $E\neq QR_{\varepsilon}$ and for $\tau$ sufficiently small we have
\begin{equation*}\frac{1}{\tau}D_{\varepsilon}(E, QR_{\varepsilon})= \frac{1}{\tau}\int_{E \Delta QR_{\varepsilon}} d^{\varepsilon}_{\infty}((x,y), \, \partial QR_{\varepsilon})\,dxdy \geq c \frac{\varepsilon }{\tau} \geq \mathrm{P}(QR_{\varepsilon}),
\end{equation*}
which implies that the limit motion is the constant set $[-\frac{a}{2}, \frac{a}{2}]\times [-\frac{b}{2}, \frac{b}{2}]$. The previous two arguments suggest the most interesting regime to be $ \tau=\alpha\e$, for some $\alpha>0$. This is the case we focus on in the last part of the paper. More precisely, we compute the minimizer of the incremental problem in \eqref{introscheme-eps} for $E$ belongin to the special class $\mathcal{SQR}_{\varepsilon}(QR)$ defined in \eqref{defSQR(QR)} and describe the asymptotic behavior, as $ \varepsilon \rightarrow 0$, of the approximate flat solution of the area-preserving mean-curvature flow in the lattice $\varepsilon \mathbb{Z}^2$ within this subclass. In Theorem \ref{theorem:approxflat-eps} we prove the existence of the time-discrete flat flow and characterize it in terms of a system of finite difference inclusions. At this stage of our investigation, even if one can prove the existence of of a continuous-in-time flat flow (again given by a family of rectangles), the characterization of the limit equations is not clear. We conjecture the limit equations in Remark \ref{conjectured-flow}. We conclude the paper by introducing a regularized version of the flow which we call rectangular flow and that at each step of the evolution is $O(\e)$ close to the previous one. For such a flow, which however preserves the volume only in the continuum limit, we can write down explicitly the  equation of motion in the form of a system of differential inclusions. It is worth observing (see Remark \ref{ultimopomeriggio}) that, unlike the unconstrained case considered in \cite{BGN}, here the system of differential inclusions that characterizes the regularized flow never reduces to a single system of ODEs. As a result, the pinning phenomenon, that is when the flow is given by the family of sets identically equal to the initial datum, can only be one of the possible motions. In Remark \ref{ultimopomeriggio} we explicitly write the condition on $\alpha,\, a$ and $b$ under which the phenomenon occurs. 
\section{Notation}
We use the convention that the set $ \N$ contains the zero.
	Let $\mathbb{R}^2$ be the Euclidean plane and let $\{e_1,e_2\}$ denote the canonical basis of $\mathbb{R}^2$. Given $(x,y)\in\R^2$, by $\vert (x,y) \vert_1= \vert x \vert + \vert y \vert$ and $\vert (x,y) \vert_{\infty}=\max \{ \vert x \vert,\, \vert y \vert\}$ we denote its $1-$ and the $\infty-$ norms, respectively. The set $  \mathcal{S}^1$ stands for the unit circle in $\R^2$. Given $\eta \in \mathcal{S}^1$, we denote by $ \eta^{\bot} \in \mathcal{S}^1$ the vector orthogonal to $\eta$ such that $ \{\eta,\, \eta^{\bot}\}$ has the same orientation as $ \{ e_1,e_2\}$. For every $\eta \in \mathcal{S}^1$ we denote by $\mathbf{p}_{\eta}\colon \mathbb{R}^2 \rightarrow \mathbb{R}^2$ the projection on $\eta$. In the case $\eta=e_1,e_2$ we also use the notation $\Pi_{i}$ in place of $\mathbf{p}_{e_i}$.
We denote the n-dimensional Lebesgue measure of a measurable set $E\subset\R^n$ by $\mathcal{L}^n(E)$. In the case $n=2$ we also use the notation $|E|:=\mathcal{L}^2(E)$.
	 Given a measurable set $E \subset \mathbb{R}^2$ we denote its barycenter as $\mathrm{Bar}(E):= \int_{E} x \,d x.$
	The symmetric difference of $E$ and $F$ is denoted $ E \triangle F$, their Hausdorff distance by $ d_{\mathcal{H}}(E,\,F)$.
	If $E$ is a set of finite perimeter we denote by $\partial^* E$ its reduced boundary. The perimeter of $E$ is denoted by $\mathrm{P}(E)= \mathcal{H}^1(\partial^* E)$ where $\mathcal{H}^1$ stands for the one dimensional Hausdorff measure. For all $x \in \partial^* E $ we denote by $\nu_E(x)$ the measure theoretic outer normal vector field to $E$ at the point $x$. We denote by $ \left[\mathcal{M}(\R^2)\right]^m$ the space of the $\R^m$-valued Radon measure on $\R^2$. If $ \mu \in \left[\mathcal{M}(\R^2)\right]^m$ we set $\vert \mu \vert$ the total variation measure associated to $\mu$. We denote by $BV(\R^2)$ the space of functions with bounded variation in $\R^2$. If $F \subset \mathbb{R}^2$ we denote its complement $F^c= \mathbb{R}^2 \setminus F$. If $n,\, m \in \mathbb{Z}$ we write $n \equiv_{2} m$ if $n, \, m$ have the same remainder with respect to the Euclidean division by $2$. Given $x \rightarrow g(x)$ a real valued function, we denote by $O(g)$ the class of all functions $ x \rightarrow f(x)$ such that for all $x$: $\vert f(x) \vert \leq C \vert g(x) \vert $ for some constant $C>0$.
	Finally we denote by $C(\star,\cdots,\star)$ a constant that depends on $\star,\cdots,\star$; this constant can change from line to line. 

	\section{The continuum setting}\label{Sec1}
	In this section we explain the area-preserving crystalline mean-curvature flow of a rectangle in the Euclidean space $\mathbb{R}^2$. We introduce a notion of global
	flat solution to the area-preserving crystalline mean-curvature flow which is based on the definition introduced independently by Almgren, Taylor \& Wang in \cite{ATW} and by Luckhaus \& Sturzenhecker in \cite{LS}. 
	
	Given $E, F \subset \R^2$ of finite perimeter we define
	\begin{equation}\label{energiaincrementale}
		\mathcal{F}_{\tau}(E,\,F):= \mathrm{P}_{\vert \cdot \vert_1}(E)+ \frac{1}{\tau} \int_{E \Delta F} d_{\infty}(z,\, \partial F) \,dz.
	\end{equation}
	In the previous formula the functional $\mathrm{P}_{\vert \cdot \vert_1}$ is the anisotropic perimeter defined for any set $E \subset \mathbb{R}^2$ of finite perimeter as
	$$  \mathrm{P}_{\vert \cdot \vert_1}(E):= \int_{\partial^* E} \vert \nu_E(z) \vert_1 d \mathcal{H}^1(z),$$ 
	or equivalently as
	\begin{equation}\label{per1divformula}
		\mathrm{P}_{\vert\cdot \vert_1}(E)= \sup \left\{\int_{E} \mathrm{div}(T)(z)dz\colon \, T \in C^1_c(\R^2,\R^2), \, \vert T \vert_{\infty}(z) \leq 1, \, \forall z \in \R^2\right\}.
	\end{equation}
In what follows we will make use of the following notation: given an open set $\Omega \subset \R^2$, for all set of finite perimeter $E \subset \R^2$  we denote the relative perimeter of $E$ in $\Omega$ by
$$\mathrm{P}_{\vert \cdot \vert_1}(E, \Omega)= \int_{\partial^* E \cap \Omega } \vert \nu_E(z) \vert_1 d \mathcal{H}^1(z).$$
In \eqref{energiaincrementale} $ \mathcal{D}(E,\,F)$ denotes the dissipation of  the
	 two sets $E\,, F\subset \mathbb{R}^2$, that we define as
	$$ \mathcal{D}(E,\,F):= \int_{E \Delta F} d_{\infty}(z,\, \partial F) \, dz$$
	where 
	$$d_\infty(z,\,\partial F)= \inf \left\{\vert z-\hat{z} \vert_{\infty}	\colon \,\hat{z} \in \partial F\right\}.$$
	\subsection{Approximate flat $\mathrm{P}_{\vert \cdot \vert_1}$ area-preserving mean-curvature flow}
	In this subsection we compute the approximate flat solution of the area-preserving crystalline mean-curvature flow with initial datum $R_0$ a rectangle of unitary area.
	\begin{definition}[Approximate flat $\mathrm{P}_{\vert \cdot \vert_1}$ area-preserving mean-curvature flow] \label{12092023def1}
		Let $R_0 \subset \mathbb{R}^2$ be a rectangle with $\vert R_0 \vert =1$ and $\mathrm{Bar}(R_0)=(0,0)$, and $\tau>0$. Let $\{ E_{k\tau}^{(\tau)}\}_{k \in \mathbb{N}}$ be a family of sets defined iteratively as
		$$ E_0^{(\tau)}=R_0 \quad \text{and} \quad E_{k \tau}^{(\tau)} \in \underset{E \subset \mathbb{R}^2,\, \vert E \vert=1}{\arg \min} \left\{\mathcal{F}_{\tau}(E,\, E_{(k-1)\tau}^{(\tau)})\right\} \quad k\geq 1. $$ 
		We define 
		$$ E_{t}^{(\tau)}:= E_{k \tau}^{(\tau)} \quad \text{for any } t \in [k \tau,\, (k+1)\tau).$$
		We call $\{E_t^{(\tau)}\}_{t \geq 0}$ an approximate flat $\mathrm{P}_{\vert \cdot \vert_1}$ solution of the area-preserving mean-curvature flow with initial datum $R_0$ and time step $\tau$. 
	\end{definition}
In the next theorems we prove that for every $R \subset \R^2$ rectangle with $\vert R \vert=1$ and $\mathrm{Bar}(R)=0$ there is a unique minimizer of the problem
\begin{equation*}\label{minimoprimadirettangolo}
\min \left\{ \mathrm{P}_{\vert \cdot \vert_{1}}(E)+ \frac{1}{\tau}\mathcal{D}(E,R)\colon \, E \subset \R^2, \,\vert E \vert=1 \right\} \tag{$\mathcal{P}_1$}
\end{equation*}
and it is a rectangle. We start by proving that the problem \eqref{minimoprimadirettangolo} is equivalent to
\begin{equation*}\label{aaaaminimoprimadirettangolo}
	\min \left\{ \mathrm{P}_{\vert \cdot \vert_{1}}(\overline{R})+ \frac{1}{\tau}\mathcal{D}(\overline{R},R)\colon \, \overline{R} \subset \R^2 \text{ be a rectangle and} \,\vert \overline{R} \vert=1, \, \mathrm{Bar}(\overline{R})=(0,0) \right\}. \tag{$\mathcal{P}_2$} 
\end{equation*}
	\begin{definition}
	Let $E\subset \mathbb{R}^2$ be a measurable set and let $\eta \in \mathcal{S}^1$. For every $z \in \R^2$ we define 
	the section $E_{z}^{\eta}\subset \mathbb{R}$ of $E$ as  $$E_{z}^{\eta}:=\left\{t \in \mathbb{R} \colon \mathbf{p}_{\eta^{\bot}}z+ t \eta \in E\right\}.$$
	The Steiner symmetrization $E^\eta$ of $E$ in direction $\eta$ is then defined as
	$$ E^{\eta}:= \left\{z \in \mathbb{R}^2 \colon \, \vert \mathbf{p}_{\eta} z \vert \leq  \frac{\mathcal{L}^1(E_{z}^{\eta})}{2}\right\}.$$
\end{definition}
We recall the following classical proposition (for a proof see for instance \cite{Maggibook}, \cite{Fusco2015}).
\begin{proposition}\label{proprietàsimsteiner}
	Let $E \subset \R^2$ be a measurable set. The following statements hold true.
	\begin{itemize}
		\item[i.] For all $  \eta \in \mathcal{S}^1$ and $\mathcal{H}^1$-a.e. $z \in \mathrm{ker}(\mathbf{p}_{\eta})$ the set $E_{z}^{\eta}$ is $\mathcal{H}^1$-measurable (the statement is true for all $z \in \mathrm{ker}(\mathbf{p}_{\eta})$ if $E$ is a Borel set).
		\item[ii.] The set $E^{\eta}\subset \R^2$ is $\mathcal{L}^2$-measurable.
		\item[iii.]  For all $\eta \in \mathcal{S}^1$ the function $z \in \mathrm{ker}(\mathbf{p}_{\eta}) \rightarrow \mathcal{L}^1(E_z^{\eta})$ is $\mathcal{H}^1$-measurable. 
		\item[iv.] We set $\pi_{1}(E):= \{z \in \R \colon \mathcal{L}^1(E_{(x,y)}^{e_2})>0\}$ and $\pi_{2}(E):= \{y \in \R\colon \mathcal{L}^1(E_{(x,y)}^{e_1})>0\}$ are $\mathcal{L}^1$-measurable and $ \vert E \setminus \pi_{1}(E) \times \pi_{2}(E) \vert=0$. 
	\end{itemize}
\end{proposition}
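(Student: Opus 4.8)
The statement is a standard measure-theoretic fact, and its entire content is a repackaging of the Fubini--Tonelli theorem together with the behaviour of sections under the Lebesgue completion; I would organize the argument as follows.

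\textbf{Reduction to an axis-parallel direction.} Since every $\eta\in\mathcal{S}^1$ is the image of $e_2$ under a rotation $R_\eta\in SO(2)$, and rotations are isometries preserving Lebesgue measurability, $\mathcal{L}^2$, $\mathcal{H}^1$ and the Borel $\sigma$-algebra, I would first reduce to $\eta=e_2$ by replacing $E$ with $R_\eta^{-1}E$. With this choice $\ker(\mathbf{p}_\eta)$ is the horizontal axis, $|\mathbf{p}_{e_2}(x,y)|=|y|$, and for $z=(x,y)$ the section reads $E_z^{e_2}=\{t\in\R:(x,t)\in E\}$, i.e.\ the vertical slice of $E$ over the abscissa $x$, independent of $y$. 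All four claims thus become statements about vertical slices of a planar measurable set.

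\textbf{Parts (i) and (iii).} Writing $E=B\cup N$ with $B\subset E$ Borel and $N=E\setminus B$ an $\mathcal{L}^2$-null set (possible by inner regularity of Lebesgue measure), the slice $B_x$ is the preimage of $B$ under the continuous map $t\mapsto(x,t)$, hence Borel for \emph{every} $x$; this gives the parenthetical Borel statement in (i). In general, Fubini's theorem applied to $\chi_N$ shows that $N_x$ is $\mathcal{L}^1$-null, hence measurable, for $\mathcal{L}^1$-a.e.\ $x$, so $E_x=B_x\cup N_x$ is measurable for a.e.\ $x$, which is (i). For (iii), Tonelli's theorem applied to the nonnegative measurable function $\chi_E$ guarantees that $x\mapsto\int_\R\chi_E(x,t)\,dt=\mathcal{L}^1(E_x^{e_2})$ is measurable. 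The \emph{only genuinely delicate point} of the whole proposition lives here: for a merely Lebesgue-measurable (non-Borel) set the slice can fail to be measurable on an exceptional null set of abscissae, which is exactly why (i) is stated ``for a.e.\ $z$'' in general but ``for all $z$'' in the Borel case; I would make this distinction explicit rather than gloss over it.

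\textbf{Part (ii).} Set $f(x):=\tfrac12\mathcal{L}^1(E_x^{e_2})$, which is measurable by (iii). In the reduced coordinates $E^{e_2}=\{(x,y):|y|\le f(x)\}$ is the sublevel set $\{g\le 0\}$ of $g(x,y):=|y|-f(x)$. Since $(x,y)\mapsto f(x)$ is $\mathcal{L}^2$-measurable (being $f$ composed with the continuous projection $(x,y)\mapsto x$) and $(x,y)\mapsto|y|$ is continuous, $g$ is $\mathcal{L}^2$-measurable, whence $E^{e_2}=g^{-1}((-\infty,0])$ is $\mathcal{L}^2$-measurable; undoing the rotation gives (ii) for general $\eta$.

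\textbf{Part (iv).} By the symmetry of the two coordinate directions it suffices to treat $\pi_1(E)=\{x:\mathcal{L}^1(E_{(x,y)}^{e_2})>0\}$, which is the superlevel set $\{h>0\}$ of the measurable function $h(x):=\mathcal{L}^1(E_x^{e_2})$ and hence $\mathcal{L}^1$-measurable; likewise $\pi_2(E)$. For the null-set claim I would use the set inclusion
\[
E\setminus\bigl(\pi_1(E)\times\pi_2(E)\bigr)\subset\bigl(E\cap(\pi_1(E)^c\times\R)\bigr)\cup\bigl(E\cap(\R\times\pi_2(E)^c)\bigr),
\]
coming from $(\pi_1(E)\times\pi_2(E))^c=(\pi_1(E)^c\times\R)\cup(\R\times\pi_2(E)^c)$, and bound each piece by Fubini: $|E\cap(\pi_1(E)^c\times\R)|=\int_{\pi_1(E)^c}\mathcal{L}^1(E_x^{e_2})\,dx=0$, since $\mathcal{L}^1(E_x^{e_2})=0$ for every $x\in\pi_1(E)^c$, and symmetrically for the second piece. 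Adding the two null contributions yields $|E\setminus(\pi_1(E)\times\pi_2(E))|=0$, completing the proof. Apart from the Lebesgue-versus-Borel subtlety flagged above, every step is a routine application of Fubini--Tonelli, so I do not anticipate any substantial further obstacle.
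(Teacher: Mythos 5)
Your proof is correct; the paper itself does not prove this proposition but simply cites \cite{Maggibook} and \cite{Fusco2015}, and your Fubini--Tonelli argument (rotation to an axis direction, Borel-plus-null decomposition for the slices, sublevel-set description of $E^\eta$, and the product decomposition of $(\pi_1(E)\times\pi_2(E))^c$ for the null-set claim) is exactly the standard argument those references contain. The only point worth tightening is that in part (iv) the function $x\mapsto\mathcal{L}^1(E_x^{e_2})$ is a priori defined only for a.e.\ $x$ (where the slice is measurable), but since modifying a function on an $\mathcal{L}^1$-null set does not affect measurability of its superlevel sets, this does not create a gap.
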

In the next theorem we state the classical Steiner inequality. For readers' convenience we provide a proof of the inequality regarding the anisotropic perimeter $P_{|\cdot|_1}$. 
	\begin{proposition}\label{steinersim}
		Let $E \subset \mathbb{R}^2$  be a set of finite perimeter. Then for every $ \eta \in \mathcal{S}^1$ the set $ E^{\eta}$ is a set of finite perimeter with $ \vert E \vert = \vert E^{\eta}\vert$ and
		$$ \mathrm{P}(E) \geq \mathrm{P}(E^{\eta}).$$
		Moreover if $ \eta \in \{ \pm e_1,\pm\, e_2\}$ it holds that
		$$ \mathrm{P}_{\vert \cdot \vert_1}(E )\geq \mathrm{P}_{\vert \cdot \vert_1}(E^{\eta} ).$$
	\end{proposition}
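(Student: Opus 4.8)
The plan is to recall the classical facts and then give a self-contained argument only for the anisotropic inequality in a coordinate direction, exploiting that the $\ell^1$ anisotropy decouples along the axes. The equality $|E|=|E^\eta|$ is immediate from Proposition \ref{proprietàsimsteiner} and Cavalieri's principle, since $E$ and $E^\eta$ have $\eta$-sections of equal length $\mathcal{L}^1(E_z^\eta)$. The isotropic inequality $\mathrm{P}(E)\ge \mathrm{P}(E^\eta)$, together with the finite perimeter of $E^\eta$, is classical, and I would simply cite \cite{Maggibook, Fusco2015}. It then remains to prove the anisotropic inequality for $\eta\in\{\pm e_1,\pm e_2\}$; since $E^\eta=E^{-\eta}$ and the two axes play symmetric roles, it suffices to treat $\eta=e_2$. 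I may also assume $|E|<\infty$: by the isoperimetric inequality a planar set of finite perimeter has $|E|$ or $|E^c|$ finite, and the latter case is not the one of interest.

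The structural point is the splitting
\[
\mathrm{P}_{|\cdot|_1}(E)=\big|D_1\mathbf{1}_E\big|(\R^2)+\big|D_2\mathbf{1}_E\big|(\R^2)=:V_1(E)+V_2(E),
\]
which follows from $|\nu_E|_1=|(\nu_E)_1|+|(\nu_E)_2|$ together with the identity $\int_{\partial^*E}|(\nu_E)_i|\,d\mathcal{H}^1=|D_i\mathbf{1}_E|(\R^2)$. Thanks to this decoupling the claim reduces to the two independent inequalities $V_i(E)\ge V_i(E^{e_2})$ for $i=1,2$, each of which I would obtain by one-dimensional slicing of the $BV$ function $\mathbf{1}_E$.

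For the direction of symmetrization, write $E_x:=\{y:(x,y)\in E\}$ and $\ell(x):=\mathcal{L}^1(E_x)$. Slicing gives $V_2(E)=\int_\R|D\mathbf{1}_{E_x}|(\R)\,dx$, and an elementary one-dimensional fact states that any $A\subset\R$ of positive finite measure has $|D\mathbf{1}_A|(\R)\ge 2$, with equality when $A$ is an interval. Since the vertical section of $E^{e_2}$ at $x$ is precisely the interval $(-\ell(x)/2,\ell(x)/2)$, this yields $V_2(E^{e_2})=\int_\R 2\,\mathbf{1}_{\{\ell>0\}}\,dx\le V_2(E)$. For the transverse direction I would establish the chain $V_1(E^{e_2})=|D\ell|(\R)\le V_1(E)$. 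The equality comes from slicing $E^{e_2}$ horizontally, whose section at height $y$ is the superlevel set $\{x:\ell(x)>2|y|\}$, so that by the coarea formula $V_1(E^{e_2})=\int_\R|D\mathbf{1}_{\{\ell>2|y|\}}|(\R)\,dy=\int_0^\infty|D\mathbf{1}_{\{\ell>s\}}|(\R)\,ds=|D\ell|(\R)$; the inequality expresses that integrating a $BV$ function in one variable does not increase its variation in the other, applied to $\ell(x)=\int_\R\mathbf{1}_E(x,y)\,dy$. Summing the two bounds gives $\mathrm{P}_{|\cdot|_1}(E^{e_2})=V_1(E^{e_2})+V_2(E^{e_2})\le V_1(E)+V_2(E)=\mathrm{P}_{|\cdot|_1}(E)$.

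The main obstacle is the transverse estimate: making rigorous both that $\ell$ is $BV$ with $|D\ell|(\R)\le V_1(E)$ and the identity $V_1(E^{e_2})=|D\ell|(\R)$, which rest on the slicing theory of $BV$ functions (measurability of sections from Proposition \ref{proprietàsimsteiner}, the structure of one-variable $BV$ functions, and the coarea formula), together with the mild reduction to $|E|<\infty$ that guarantees $\ell\in L^1$ and hence finite sections almost everywhere. The estimate in the symmetrization direction, by contrast, is routine once the slicing representation of $V_2$ is in place.
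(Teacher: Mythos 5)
Your argument is correct in outline, and it takes a genuinely different route from the paper's. The paper first treats open bounded sets with polyhedral boundary by an explicit computation: it writes $\partial^* E$ as a union of graphs of affine functions $u_h^k, v_h^k$ over intervals plus vertical segments, evaluates $\mathrm{P}_{\vert\cdot\vert_1}(E)$ and $\mathrm{P}_{\vert\cdot\vert_1}(E^{e_2})$ through the area formula for graphs (the integrands being $2+\vert \tfrac{du_h^k}{dx}\vert+\vert\tfrac{dv_h^k}{dx}\vert$ versus $2+\vert\sum_k(\tfrac{du_h^k}{dx}-\tfrac{dv_h^k}{dx})\vert$), and concludes by the triangle inequality; the general case then follows by polyhedral approximation, Reshetnyak continuity to pass the anisotropic perimeters to the limit, an $L^1$-contraction estimate for the symmetrized sets, and lower semicontinuity. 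You instead work directly on the general set via the decoupling $\mathrm{P}_{\vert\cdot\vert_1}(E)=\vert D_1\chi_E\vert(\R^2)+\vert D_2\chi_E\vert(\R^2)$ and one-dimensional slicing: in the symmetrization direction the bound is the elementary fact that a one-dimensional set of positive finite measure has perimeter at least $2$, and in the transverse direction it is the chain $\vert D_1\chi_{E^{e_2}}\vert(\R^2)=\vert D\ell\vert(\R)\le \vert D_1\chi_E\vert(\R^2)$ with $\ell(x)=\mathcal{L}^1(E_x)$, obtained from the coarea formula and the contraction of the directional variation under integration in the orthogonal variable. Your route avoids the approximation step and Reshetnyak's theorem altogether and gives the finiteness of $\mathrm{P}(E^{e_2})$ as a byproduct, at the price of invoking the full slicing theory of $BV$ functions and the coarea formula; the paper's route is computationally heavier but rests only on the density of polyhedral sets and the graph area formula. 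Two minor points to tighten: the reduction to $\vert E\vert<\infty$ should be made a standing assumption rather than dismissed (the paper does exactly this in its Step 2, and it is harmless since the proposition is only applied to sets of unit area), and the horizontal section of $E^{e_2}$ at height $y$ is $\{\ell\ge 2\vert y\vert\}$ rather than $\{\ell>2\vert y\vert\}$; these differ by a Lebesgue-null set for all but countably many $y$, so your coarea computation goes through, but the discrepancy deserves a sentence.
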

	\begin{proof}
		For a proof of the first statement see \cite{Maggibook}[chapter 14]. We present only the proof of the second statement when $\eta= e_2$, the other case being analogous. Given $u \colon \R \rightarrow \R$ and $H \subset \R$ we denote the graph of $u $ over $H$ by
		$$\Gamma(u,H):= \left\{(x,y) \in \R^2\colon x \in H, \, y=u(x)\right\}.$$
		We divide the proof in two steps. \\
		\textit{Step 1)}\\
		Let $E\subset \R^2$ be an open bounded set with polyhedral boundary.
 By this assumption there exists a finite partition of the set $G:= \{x \in \R \colon  \mathcal{L}^1(E_{(x,0)}^{e_2})>0\}$ into intervals $ \{(a_h,b_h)\}_{h=1}^{M}$ and into points $\{c_l\}_{l=1}^{L}$ in $\R$  i.e., $$ G= \bigcup_{h=1}^{M}(a_h,b_h) \cup \bigcup_{l=1}^{L} \{c_l\},$$
		and there exist affine functions $v_{h}^{k},u_h^{k}\colon (a_h,b_h) \rightarrow \mathbb{R}$, $1 \leq h \leq M$, $ 1 \leq k \leq N(h) $ with $ N(h) \in \mathbb{N}$ and there exist numbers $t_l^j,w_l^j \in \R$, $1 \leq l \leq L$, $ 1 \leq j \leq S(l)$ with $S(l) \in \N$ such that
		\begin{equation*}
			\partial^* E= \bigcup_{h=1}^{M} \bigcup_{k=1}^{N(h)} \Gamma(u_h^k,(a_h,b_h))\cup \Gamma(v_h^k,(a_h,b_h)) \cup \bigcup_{l=1}^{L} \bigcup_{j=1}^{S(l)} \{c_l\} \times (t_l^j,w_l^j)
		\end{equation*}
		and 
		\begin{equation}\label{asdad!!}
			E= \bigcup_{h=1}^{M} \left\{(x,y) \in (a_h,b_h) \times \R \colon \, y \in \bigcup_{k=1}^{N(h)}\left(v_h^k(x),u_h^k(x)\right)\right\}.	
		\end{equation} 
	By formula \eqref{asdad!!} we have that $ \forall h\in\left\{1,\dots,M\right\} \text{ and } \forall x \in (a_h,\,b_h)$
	$$  \mathcal{L}^1(E_{(x,0)}^{e_2})= \sum_{k=1}^{N(h)}u_h^k(x)-v_h^k(x)$$
	and $\forall h \in \left\{1,\dots,L\right\} \text{ and }  x =c_l$ 
	$$ \mathcal{L}^1(E_{(x,0)}^{e_2})= \sum_{j=1}^{S(l)}\vert w_l^j-t_l^j \vert.$$
	We set $m(x):= \mathcal{L}^1(E_{(x,0)}^{e_2})$ for every $x \in \R$ and we observe that such an $m$ is an affine function on every $ (a_h,b_h)$.  Since $ E^{e_2}= \left\{(x,y) \in G \times \R\colon \vert y \vert \leq \frac{m(x)}{2}\right\}$ is a
	bounded set with polyhedral boundary, its perimeter is finite. We denote by $\mu_E, \, \mu_{E^{e_2}}\in \left[\mathcal{M}(\R^2)\right]^2$ the distributional derivatives of the functions $\chi_E$ and $ \chi_{E^{e_2}}$, respectively. By the De Giorgi structure theorem we have that $\vert \mu_E \vert= \mathcal{H}^1\mrestr \partial^{*} E $ and $\vert \mu_{E^{e_2}} \vert= \mathcal{H}^1\mrestr \partial^{*} E^{e_2} $. For every $h =1,\dots,M$, for every $k =1,\dots, N(h)$ and for all $x \in (a_h,b_h)$ we have that the exterior normal vector field to $E$ is 
	\begin{equation*}
		\nu_{E}((x,u_h^k(x)))= \frac{\left(-\frac{d u_h^k}{dx}(x), 1\right)}{\sqrt{1+ \left| \frac{d u_h^k}{dx}(x) \right|^2 }},\quad  \nu_{E}((x,v_h^k(x)))= \frac{\left(\frac{d v_h^k}{dx}(x), -1\right)}{\sqrt{1+ \left| \frac{d v_h^k}{dx}(x) \right|^2 }}
	\end{equation*} 
and similarly
\begin{equation*}
	\nu_{E^{e_2}}((x,\frac{m(x)}{2}))= \frac{\left(-\frac{1}{2}\frac{d m}{dx}(x), 1\right)}{\sqrt{1+ \left| \frac{1}{2}\frac{d m}{dx}(x) \right|^2 }},\quad \nu_{E^{e_2}}((x,-\frac{m(x)}{2}))= \frac{\left(\frac{1}{2}\frac{d m}{dx}(x), -1\right)}{\sqrt{1+ \left| \frac{1}{2} \frac{d m}{dx}(x) \right|^2 }}.
\end{equation*} 
Moreover for every $l=1,\dots,L$ and for every $j=1,\dots,S(l)$ and for all $(x,y) \in \{c_l\}\times(t_l^j,w_l^j)$ we have that
\begin{equation*}
	\nu_{E}((x,y)) \in \{-e_1,e_1\},
\end{equation*}
and for $(x,y) \in \{c_l\} \times (- \frac{m(c_l)}{2},\frac{m(c_l)}{2})$ we have that
\begin{equation*}
	\nu_{E^{e_2}}((x,y)\in \{-e_1,e_1\}.
 \end{equation*}
    Using the
	formula for the area of a graph, the formulas above and by the very definition of the function $m$ it holds that
	\begin{equation}\label{sabform1}
		\begin{split}
		\mathrm{P}_{\vert \cdot \vert_1}(E)=&  \int_{\partial^* E} \vert \nu_{E}(z)  \vert_{1} d \mathcal{H}^1(z)\\
		=& \sum_{h=1}^M \mathrm{P}_{\vert \cdot \vert_1}(E, (a_h,b_h) \times \R) + \sum_{l=1}^{L} \sum_{j=1}^{S(l)} \int_{\{c_l\}\times (t_l^j,w_l^j)} \vert \nu_{E}(z)\vert_1 d \mathcal{H}^1(z)\\
		= & \sum_{h=1}^{M} \int_{a_h}^{b_h} \sum_{k=1}^{N(h)} \vert \nu_{E}(x,u^k_h(x)) \vert_1 \sqrt{1+ \vert \frac{d u_h^k}{d x}(x)\vert^2}+\vert \nu_{E}(x,v^k_h(x)) \vert_1 \sqrt{1+ \vert \frac{d v^k_h}{d x}(x)\vert^2}dx \\
		&+ \sum_{l=1}^{L} \sum_{j=1}^{S(l)} \vert t_l^j-w_l^j \vert\\
		=& \sum_{h=1}^{M} \int_{a_h}^{b_h} \sum_{k=1}^{N(h)} 2+ \left| \frac{du_h^k}{d x} (x)\right|+ \left| \frac{dv_h^k}{d x} (x)\right| +\sum_{l=1}^{L}m(c_l)
	\end{split}
	\end{equation}
and we have
\begin{equation}\label{sabform2}
	\begin{split}
		 \mathrm{P}_{\vert \cdot \vert_1}(E^{e_2})= & \int_{\partial^* E^{e_2}} \vert \nu_{E^{e_2}}(z) \vert_1 d \mathcal{H}^1(z) \\
		= &\mathrm{P}_{\vert \cdot \vert_1}(E^{e_2},G \times \R)+ \sum_{l=1}^L  \int_{\{c_l\} \times (- \frac{m(c_l)}{2},\frac{m(c_l)}{2})}\vert \nu_{E^{e_2}}(z) \vert_1 d \mathcal{H}^1(z) \\
		= &\int_{G} \sqrt{1+\left| \frac{1}{2} \frac{d m}{d x}(x)\right|^2} \left|\nu_{E^{e_2}}\left(x,\frac{m(x)}{2}\right) \right|_1+ \sqrt{1+\left| \frac{1}{2} \frac{d m}{d x}(x)\right|^2} \left|\nu_{E^{e_2}}\left(x,-\frac{m(x)}{2}\right) \right|_1dx \\
		&+ \sum_{l=1}^{L}m(c_l) 
		= \int_{G} 2 + \left| \frac{d m}{d x}(x) \right| + \sum_{l=1}^{L}m(c_l) \\
		=& \sum_{h=1}^M \int_{a_h}^{b_h} 2+ \left| \sum_{k=1}^{N(h)} \frac{du_h^k}{d x} (x)- \frac{dv_h^k}{d x} (x)   \right|+ \sum_{l=1}^{L}m(c_l).
	\end{split}
\end{equation}
	Therefore by the formulas \eqref{sabform1} and \eqref{sabform2} and from the triangular inequality it follows that 
	\begin{equation}\label{dissteiner}
	\mathrm{P}_{\vert \cdot \vert_1}(E) \geq \mathrm{P}_{\vert \cdot \vert_1}(E^{e_2}).
    \end{equation} 
\textit{Step 2)}\\
Let $E \subset \R^2$ be a set of finite perimeter and finite measure. There exists a sequence $\{E_h\}_{h \in \N} $ of open bounded sets with polyhedral
boundary such that 
\begin{equation}\label{limdensitàsab11}
 \lim_{h \rightarrow + \infty} \vert E_h \Delta E \vert=0 ,\quad \lim_{h \rightarrow + \infty} \mathrm{P}(E_h)= \mathrm{P}(E).
\end{equation}
Hence by the formula \eqref{limdensitàsab11} and the Reshetnyak continuity theorem we have 
\begin{equation}\label{contperansab11}
 \lim_{h \rightarrow + \infty} \mathrm{P}_{\vert \cdot \vert_1}(E_h)= \mathrm{P}_{\vert \cdot \vert_1}(E).
\end{equation}
Let us set $m_h(x):= \mathcal{L}^1((E_h)_{(x,0)}^{e_2})$ and  $G_h:= \{x \in \R\colon m_h(x)>0\}$,
by Fubini’s theorem, we have
\begin{equation*}\label{difsimmsab11}
	\vert E_h \Delta E \vert = \int_{\R} \mathcal{L}^1((E_h)_{(x,0)}^{e_2}\Delta E_x)dx \geq \int_{\R} \vert m_h(x)-m(x) \vert dx= \vert E_h^{e_2} \Delta E^{e_2} \vert.
\end{equation*}
Hence by the above formula and \eqref{limdensitàsab11} we have
\begin{equation}\label{asdasd99}
	\lim_{h \rightarrow +\infty} \vert E_h^{e_2} \Delta E^{e_2} \vert=0.
\end{equation} 
Therefore by \eqref{dissteiner}, \eqref{contperansab11}, \eqref{asdasd99} and by the lower semicontinuity of the function $ E \rightarrow \mathrm{P}_{\vert \cdot \vert_1}(E)$ with respect to the $ 	\mathrm{L^1(\R^2)}$ topology we obtain
\begin{equation*}
	\mathrm{P}_{\vert \cdot\vert_1}(E)= \lim_{h \rightarrow + \infty} \mathrm{P}_{\vert \cdot \vert_1}(E_h) \geq \liminf_{h \rightarrow + \infty} \mathrm{P}_{\vert \cdot \vert_1}((E_h)^{e_2}) \geq \mathrm{P}_{\vert \cdot \vert_1}(E^{e_2})
\end{equation*}
i.e. the thesis.
	\end{proof}
\begin{proposition}\label{decresdisp}
	Let $E \subset \R^2$ be a set of finite perimeter with $ \vert E \vert=1$ and let $R \subset \R^2$ be a rectangle with $\mathrm{Bar}(R)=(0,0)$ and $\vert R \vert=1$ with horizontal sidelength equal to $a>0$ and vertical sidelength equal to $b>0$. Then for every $ \eta \in \{ \pm e_1,\pm e_2\}$ 
	\begin{equation}
		\mathcal{D}(E,R) \geq \mathcal{D}(E^{\eta}, R).
	\end{equation}
\end{proposition}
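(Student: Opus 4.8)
The plan is to fix $\eta=e_2$ and argue section by section along vertical lines; the case $\eta=e_1$ is identical after exchanging the two coordinates, and $\eta=-e_i$ produces the same symmetrized set $E^{\eta}$ and so needs no separate treatment. Write $R=[-\tfrac a2,\tfrac a2]\times[-\tfrac b2,\tfrac b2]$ and set $g(z):=d_{\infty}(z,\partial R)$. By Fubini,
$$ \mathcal{D}(E,R)=\int_{\R}\Big(\int_{(E\Delta R)_{x}} g(x,y)\,dy\Big)\,dx,\qquad (E\Delta R)_x=E_x\,\Delta\,R_x, $$
where $E_x:=\{y:(x,y)\in E\}$ and the sections are measurable for a.e.\ $x$ by Proposition \ref{proprietàsimsteiner}. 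Since $E^{e_2}_x=[-\tfrac{\ell(x)}2,\tfrac{\ell(x)}2]$ with $\ell(x):=\mathcal{L}^1(E_x)=\mathcal{L}^1(E^{e_2}_x)<\infty$ for a.e.\ $x$, it suffices to prove the one–dimensional inequality
$$ \int_{E_x\Delta R_x} g(x,y)\,dy\;\ge\;\int_{E^{e_2}_x\Delta R_x} g(x,y)\,dy \qquad\text{for a.e. }x, $$
and then integrate in $x$.

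The geometric input is the profile of $g(x,\cdot)$. A direct computation of the $\infty$–distance to $\partial R$ shows that, setting $\beta(x):=\tfrac b2$ for $|x|<\tfrac a2$ and $\beta(x):=0$ for $|x|>\tfrac a2$, the function $y\mapsto g(x,y)$ is \emph{even with a single valley} at $|y|=\beta(x)$: it is non-increasing on $[0,\beta(x)]$ and non-decreasing on $[\beta(x),\infty)$, while $R_x=[-\beta(x),\beta(x)]$ up to an $\mathcal{L}^1$–null set. Indeed, for $|x|<\tfrac a2$ one finds $g(x,y)=\min(\tfrac a2-|x|,\,\tfrac b2-|y|)$ when $|y|\le\tfrac b2$ and $g(x,y)=|y|-\tfrac b2$ when $|y|>\tfrac b2$; for $|x|>\tfrac a2$ one finds $g(x,y)=\max\!\big(|x|-\tfrac a2,\,(|y|-\tfrac b2)_{+}\big)$, which is non-decreasing in $|y|$ with valley at $0$.

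The decisive algebraic step is, with $B:=R_x$ and $A:=E_x$, the identity
$$ \int_{A\Delta B} g(x,y)\,dy=\int_{B} g(x,y)\,dy-\int_{A} h_x(y)\,dy,\qquad h_x:=(\chi_{B}-\chi_{B^{c}})\,g(x,\cdot), $$
which follows from $\chi_{A\Delta B}=\chi_A+\chi_B-2\chi_{A\cap B}$ and $1-2\chi_B=\chi_{B^c}-\chi_B$. The valley structure together with $g\ge0$ makes $h_x$ even and non-increasing in $|y|$: it equals the non-increasing $g(x,\cdot)$ on $B$, it jumps downward across $\partial B$ (since $g\ge0$ there), and it equals $-g(x,\cdot)$ — again non-increasing in $|y|$ — on $B^{c}$. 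Thus $h_x$ is symmetric-decreasing. As $\int_B g(x,\cdot)$ is independent of $A$ and $\mathcal{L}^1(A)=\ell(x)$ is fixed, minimizing the left-hand side amounts to \emph{maximizing} $\int_A h_x$ at fixed measure; by the bathtub principle the maximizer is the super-level set of $h_x$ of measure $\ell(x)$, namely the symmetric interval $A^{*}=[-\tfrac{\ell(x)}2,\tfrac{\ell(x)}2]=E^{e_2}_x$. Hence $\int_A h_x\le\int_{A^{*}}h_x$, which is exactly the sought sectional inequality.

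I expect the only genuine obstacle to be the verification of the valley profile of $g(x,\cdot)$ and of the identification $R_x=[-\beta(x),\beta(x)]$, i.e.\ the elementary but slightly tedious case analysis of $d_{\infty}(\cdot,\partial R)$ across the two regimes $|x|\lessgtr\tfrac a2$ (including the degenerate case $\beta(x)=0$, where $B$ is $\mathcal{L}^1$–null, $h_x=-g(x,\cdot)$, and the statement reduces to minimizing $\int_A g(x,\cdot)$ for a symmetric-increasing weight). Once the profile is in place, the reduction to the bathtub principle is automatic, and integrating the sectional inequalities in $x$ via Fubini yields $\mathcal{D}(E,R)\ge\mathcal{D}(E^{e_2},R)$, completing the proof.
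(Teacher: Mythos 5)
Your argument is correct. The opening move — Fubini plus the reduction to a one-dimensional inequality on each vertical slice — is exactly the paper's (cf.\ its formulas \eqref{23032023pom1}--\eqref{23032023pom3}), but your treatment of the sliced inequality is genuinely different. The paper splits into the cases $\mathcal{L}^1(E_x)<\mathcal{L}^1(R_x)$ and $\mathcal{L}^1(E_x)\geq\mathcal{L}^1(R_x)$, in each case first "moving" the part of $E_x$ lying on the wrong side of $R_x$ so as to reduce to $E_x\subset R_x$ (resp.\ $R_x\subset E_x$), and then concludes with an explicit swap of the sets $\tilde E_x\setminus E_x$ and $E_x\setminus\tilde E_x$ using a pointwise bound on $d_\infty(\cdot,\partial R)$. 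You instead absorb all of this into the single identity $\int_{A\Delta B}g=\int_B g-\int_A h_x$ with $h_x=(\chi_B-\chi_{B^c})\,g(x,\cdot)$, verify that the valley of $g(x,\cdot)$ sits exactly on $\partial R_x$ so that $h_x$ is symmetric-decreasing, and invoke the bathtub principle. What your route buys is a uniform argument with no case distinction and no preliminary mass-moving reductions; moreover the comparison $h_x(y)\geq h_x(y')$ for $|y|\leq \ell/2<|y'|$ follows directly from monotonicity of $h_x$ in $|y|$, which is cleaner than the paper's intermediate constant $\tfrac{b-\mathcal{L}^1(E_x)}{2}$ in \eqref{27032023mattina5} (that constant bounds $b/2-|y|$ but not necessarily the minimum with $a/2-|x|$; the paper's conclusion \eqref{27032023mattina6} is still correct because only the direct comparison of the two distance values is actually used). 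What the paper's route buys is complete self-containedness: it never appeals to a rearrangement lemma, only to elementary set manipulations. Your computation of the profile of $d_\infty(\cdot,\partial R)$ in the two regimes $|x|\lessgtr a/2$, including the degenerate slice $R_x=\emptyset$, is accurate, so the only remaining work is the routine verification you already flag.
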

\begin{proof}
	We only prove the case $\eta= e_2$. For $\mathcal{H}^1$-a.e. $(x,0) \in \R^2$ let as set $ E_x:=E_{(x,0)}^{e_2}$ and $ \tilde{E}_x:=  (-\frac{\mathcal{L}^1(E_x)}{2},\frac{\mathcal{L}^1(E_x)}{2})$. We denote by $ R_x$ the interval $ (-\frac{b}{2},\frac{b}{2})$. By the Fubini-Tonelli theorem we can write
	\begin{equation}\label{23032023pom1}
		\mathcal{D}(E,R)= \int_{E \Delta R} d_{\infty}(z,\partial R)dz= \int_{\R}dx \int_{E_x \Delta R_x} d_{\infty}((x,y),\partial R) dy
	\end{equation}
and 
\begin{equation}\label{23032023pom2}
	\mathcal{D}(E^{e_2},R)= \int_{E^{e_1} \Delta R} d_{\infty}(z,\partial R)dz= \int_{\R}dx \int_{\tilde{E}_x \Delta R_x} d_{\infty}((x,y),\partial R) dy.
\end{equation}
We claim that 
\begin{equation}\label{23032023pom3}
	 \int_{E_x \Delta R_x} d_{\infty}((x,y),\partial R) dy\geq \int_{\tilde{E}_x \Delta R_x} d_{\infty}((x,y),\partial R) dy. \end{equation}
 We first study the case $\mathcal{L}^1(E_x) <  \mathcal{L}^1(R_x)$. 
	If moreover $\mathcal{L}^1(E_x \cap R_x^c) \neq 0$ then we have $\mathcal{L}^1(R_x \setminus E_x)>\mathcal{L}^1(E_x \cap R^c)$. Hence there exists a $\mathcal{L}^1$-measurable set $M \subset R_x \setminus E_x$ such that $\mathcal{L}^1(M)= \mathcal{L}^1(E_x \cap R^c)$. 
	Indeed the function $f(t):= \mathcal{L}^1( (-t,t) \cap R_x \setminus E_x)$ is $C^0(\R)$ 
	  and such that $f(0)=0$ and  for $ f(\frac{b}{2})= \mathcal{L}^1(R_x \setminus E_x)$. Therefore there exist $s \in (-t,t)$ such that $ f(s)=\mathcal{L}^1( (-s,s) \cap R_x \setminus E_x)= \mathcal{L}^1(E_x \cap R^c)$ and hence we define $M= : (-s,s) \cap R_x \setminus E_x$. Therefore we have 
	  \begin{equation}\label{27032023mattina1}
	  	\begin{split}
	  	&\int_{E_{x} \Delta R_{x}} d_{\infty}((x,y), \partial R ) dy = \int_{E_x \setminus R_x } d_{\infty}((x,y), \partial R ) dy+ \int_{R_x \setminus E_x } d_{\infty}((x,y), \partial R ) dy\\
	  	&\geq \int_{R_x \setminus E_x } d_{\infty}((x,y), \partial R ) dy \geq \int_{R_x \setminus (E_x \cup M) } d_{\infty}((x,y), \partial R ) dy.
  	\end{split}
	  \end{equation}
  By the last formula it suffices to study the case $\mathcal{L}^1(E_x) < \mathcal{L}^1(R_x)$ and $ E_x \subset R_x$. We observe that 
  \begin{equation}\label{27032023mattina2}
  	R_x \setminus E_x =\tilde{E}_x \setminus E_x \cup R_x \setminus (\tilde{E}_x \cup E_x) \text{ and }  R_x \setminus \tilde{E}_x = E_x \setminus \tilde{E}_x \cup R_x \setminus (\tilde{E}_x \cup E_x),
  \end{equation}
  hence we obtain that
  \begin{equation}\label{27032023mattina4}
  \vert R_x \setminus E_x \vert=  \vert R_x \setminus \tilde{E}_x \vert \implies \vert \tilde{E}_x \setminus E_x \vert = \vert E_x \setminus \tilde{E}_x \vert. 	
\end{equation} 
  Moreover for all $ y' \in E_x \setminus \tilde{E}_x $ and  $ y \in \tilde{E}_x \setminus E_x$ it holds 
  \begin{equation}\label{27032023mattina5}
  d_{\infty}((x,y'),\partial R) \leq \frac{b- \mathcal{L}^1(E_x)}{2} \leq d_{\infty}((x,y),\partial R). 
  \end{equation} 
Therefore thanks to \eqref{27032023mattina2}, \eqref{27032023mattina4} and \eqref{27032023mattina5} we have
\begin{equation}\label{27032023mattina6}
	\begin{split}
	&	\int_{E_x \Delta R_x} d_{\infty}((x,y),\partial R) dy = \int_{R_x \setminus E_x }d_{\infty}((x,y),\partial R) dy\\
	& = \int_{\tilde{E}_x \setminus E_x }d_{\infty}((x,y),\partial R) dy +\int_{ R_x \setminus (\tilde{E}_x \cup E_x )}d_{\infty}((x,y),\partial R) dy  \\
	& \geq 	\int_{E_x \setminus \tilde{E}_x }d_{\infty}((x,y),\partial R) dy +\int_{ R_x \setminus ( \tilde{E}_x\cup  E_x )}d_{\infty}((x,y),\partial R) dy 	\\
	&= \int_{R_x \setminus \tilde{E}_x }d_{\infty}((x,y),\partial R) dy=\int_{\tilde{E}_x \Delta R_x} d_{\infty}((x,y),\partial R) dy.
	\end{split}
\end{equation}
If instead $ \mathcal{L}^1(E_x) \geq \mathcal{L}^1(R_x)$ and $\mathcal{L}^1( R_x \setminus E_x) \neq 0$ then we have that there exist $M \subset E_x \setminus R_x$ measurable set such that $\vert M \vert= \vert R_x \setminus E_x \vert $. Therefore we define 
$$E_x^1:= R_x \cup \left(E_x \setminus (M \cup R_x)\right) $$
and we obtain
$$ E^1_x \Delta R_x = E_x \setminus (M \cup R_x) \subset E_x \setminus R_x \subset E_x \setminus R_x \cup R_x \setminus E_x   =E_x \Delta R_x,$$
hence
\begin{equation}\label{27032023mattina7}
	\int_{E_x \Delta R_x} d_{\infty}((x,y),\partial R ) dy \geq \int_{E^1_x \Delta R_x } d_{\infty}((x,y),\partial R)dy .
\end{equation}
The last formula show that it suffices to analyze the case $\mathcal{L}^1(E_x) \geq \mathcal{L}^1(R_x)$ and $ R_x \subset E_x$.
We observe that
\begin{equation}\label{27032023mattina8}
	E_x \Delta R_x = E_x \setminus R_x = \left[ (E_x \setminus R_x) \cap \tilde{E}_x\right] \cup \left[ (E_x \setminus R_x)\setminus \tilde{E}_x\right]
\end{equation}
and 
\begin{equation}\label{27032023mattina9}
	\tilde{E}_x \Delta R_x = \tilde{E}_x \setminus R_x = \left[ (E_x \setminus R_x) \cap \tilde{E}_x\right] \cup \left[  \tilde{E}_x \setminus E_x \right].
\end{equation}
Moreover we have that for all $y \in (E_x \setminus R_x)\setminus \tilde{E}_x$ and for all $y' \in \tilde{E}_x \setminus E_x $
\begin{equation}\label{27032023mattina10}
	d_{\infty}((x,y),\partial R) \geq \frac{\mathcal{L}^1(E_x)-b}{2} \geq d_{\infty}((x,y'),\partial R).
\end{equation}
Thank to \eqref{27032023mattina8}, \eqref{27032023mattina9} and \eqref{27032023mattina10}, following the same arguments leading to  \eqref{27032023mattina6} we have
\begin{equation}\label{27032023mattina11}
	\int_{E_x \Delta R_x} d_{\infty}((x,y),\partial R)dy \geq \int_{\tilde{E}_x \Delta R_x} d_{\infty}((x,y),\partial R)dy. 
\end{equation} 
The formula above together with \eqref{27032023mattina6} gives \eqref{23032023pom3}. The thesis follow from \eqref{23032023pom3}, \eqref{23032023pom1}, \eqref{23032023pom2}.
\end{proof}

\begin{proposition}\label{Pminpi1pi2}
	For every $E\subset \R^2$ set of finite perimeter it holds that 
	\begin{equation}\label{140220231}
		\mathrm{P}_{\vert \cdot\vert_1}(E) \geq 2 \mathcal{H}^1(\pi_{1}(E))+  2 \mathcal{H}^1(\pi_{2}(E)).
	\end{equation}
\end{proposition}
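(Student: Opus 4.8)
The plan is to split the anisotropic perimeter into the two contributions coming from the horizontal and vertical components of the outer normal, and then to bound each contribution from below by twice the length of the corresponding projection via one-dimensional slicing.

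Since $|\nu|_1 = |\nu_1| + |\nu_2|$ for $\nu = (\nu_1, \nu_2)$, and since by De Giorgi's structure theorem $D\chi_E = \nu_E\, \mathcal{H}^1 \mrestr \partial^* E$, I would first write
\[
\mathrm{P}_{\vert\cdot\vert_1}(E) = \int_{\partial^* E} |\nu_{E,1}|\, d\mathcal{H}^1 + \int_{\partial^* E} |\nu_{E,2}|\, d\mathcal{H}^1 = |D_1 \chi_E|(\R^2) + |D_2\chi_E|(\R^2),
\]
where $D_i\chi_E$ denotes the $i$-th component of the vector measure $D\chi_E$. The same identity can be read off the sup-formula \eqref{per1divformula}: the constraint $|T|_\infty \le 1$ decouples as $|T_1|\le 1$ and $|T_2|\le 1$, so the supremum splits into two independent optimizations over fields of the form $T=(T_1,0)$ and $T=(0,T_2)$, each returning one of the two total variations above.

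Next I would slice each component. Writing $E_x := E^{e_2}_{(x,0)}$ for the vertical section and $E^y := E^{e_1}_{(0,y)}$ for the horizontal one (as in the Steiner subsection), the one-dimensional slicing theorem for $BV$ functions gives
\[
|D_2\chi_E|(\R^2) = \int_{\R} \mathrm{P}(E_x)\, dx, \qquad |D_1\chi_E|(\R^2) = \int_{\R} \mathrm{P}(E^y)\, dy,
\]
with $\mathrm{P}(\cdot)$ the one-dimensional perimeter, the integrands being measurable and the slices $E_x$, $E^y$ being sets of finite perimeter in $\R$ for a.e. $x$, resp. $y$.

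The core pointwise estimate is that $\mathrm{P}(E_x) \ge 2$ for a.e. $x$ with $\mathcal{L}^1(E_x) > 0$. Since $|E| = \int_\R \mathcal{L}^1(E_x)\,dx < \infty$ (in all our applications $|E|=1$), the slice $E_x$ has finite measure for a.e. $x$; being also of finite perimeter, it agrees up to a null set with a finite disjoint union of bounded open intervals, which is nonempty precisely when $\mathcal{L}^1(E_x)>0$, and in that case has at least two boundary points, i.e.\ $\mathrm{P}(E_x) \ge 2$. Recalling that $\pi_1(E) = \{x : \mathcal{L}^1(E_x) > 0\}$ is measurable by Proposition \ref{proprietàsimsteiner}, I would then estimate
\[
|D_2\chi_E|(\R^2) = \int_\R \mathrm{P}(E_x)\, dx \ge \int_{\pi_1(E)} 2\, dx = 2\,\mathcal{H}^1(\pi_1(E)),
\]
and symmetrically $|D_1\chi_E|(\R^2) \ge 2\,\mathcal{H}^1(\pi_2(E))$. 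Summing the two bounds yields \eqref{140220231}. The delicate point — and the only place where something can genuinely fail — is exactly this one-dimensional bound $\mathrm{P}(E_x)\ge 2$: it breaks for unbounded slices (a half-line contributes only $1$ and a full line $0$), so the finiteness of $|E|$, which forces a.e.\ slice to be bounded, is essential (without it the inequality is false, e.g.\ for the complement of a ball). The remaining points — measurability of $x\mapsto \mathrm{P}(E_x)$ and the a.e.\ validity of the slicing identities — are routine facts of $BV$ slicing theory.
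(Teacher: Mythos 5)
Your proof is correct and follows essentially the same route as the paper: both split $\mathrm{P}_{|\cdot|_1}$ into the two directional total variations (the paper via the dual sup-formula with product test fields $T_1(x,y)=f(x)g(y)$, you via the $BV$ slicing theorem, which amounts to the same thing) and then apply the one-dimensional bound that a nonempty slice has perimeter at least $2$. Your explicit remark that this last bound requires the slices to have finite measure, hence that $|E|<\infty$ is needed (the statement fails, e.g., for the complement of a ball), is a point the paper's proof glosses over, and is worth keeping.
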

\begin{proof}
	By \eqref{per1divformula} we have that
	\begin{equation}\label{formulaprepranzo}
	\begin{split}
		\mathrm{P}_{\vert \cdot \vert_1}(E)=& \sup \left\{ \int_{E} \mathrm{div}(T)(x,y)dxdy\colon T \in C^1_{c}(\R^2,\R^2), \, \vert T \vert_{\infty}(x,y) \leq 1, \, \forall x \in \R^2\right\}\\
		=&\sup \left\{ \int_{E} \frac{\partial T_1}{\partial x}+ \frac{\partial T_2}{\partial y}dxdy\colon T_i \in C^1_{c}(\R^2,\R), \, \vert T_i \vert(x,y) \leq 1, \, \forall x \in \R^2, \, \forall i=1,2\right\} \\
		=&  \sup \left\{ \int_{E} \frac{\partial T_1}{\partial x} dxdy\colon T_1 \in C^1_{c}(\R^2,\R), \, \vert T_1 \vert(x,y) \leq 1, \, \forall x \in \R^2\right\}\\
		&+ \sup \left\{ \int_{E} \frac{\partial T_2}{\partial y} dxdy\colon T_2 \in C^1_{c}(\R^2,\R), \, \vert T_2 \vert(x,y) \leq 1, \, \forall x \in \R^2\right\}.
	\end{split}
	\end{equation}
On he other hand we have 
\begin{equation}\label{03032023formulaprop1}
	\begin{split}
		 \sup \left\{ \int_{E} \frac{\partial T_1}{\partial x} dxdy\colon T_1 \in C^1_{c}(\R^2,\R), \, \vert T_1 (z)\vert \leq 1, \, \forall z \in \R^2\right\}\\
		&\hspace{-9cm}\geq  \sup \left\{ \int_{E} \frac{\partial T_1}{\partial x} dxdy\colon T_1(x,y)=f(x)g(y), \, f,g \in C^1_{c}(\R,\R), \, \vert f \vert, \, \vert g \vert \leq 1, \, \text{in } \R\right\}\\
		&\hspace{-9cm}= \sup \left\{ \int_{\pi_{2}(E)}g(y)dy \int_{ E^{e_1}_{(0,y)}} \frac{ d  f}{d x}(x)dx\colon  f,g \in C^1_{c}(\R,\R), \, \vert f \vert, \, \vert g \vert \leq 1, \, \text{in } \R\right\}\\ &\hspace{-9cm}\geq 2 \mathcal{H}^{1}(\pi_{2}(E))
	\end{split}
\end{equation}
where in the last inequality we have used that the slice $ E_{(0,y)}^{e_1}$ is a set of finite perimeter in $\R$  for $\mathcal{H}^1$-a.e. $ y \in \R$, and for every $y \in \pi_{2}(E)$ the zero dimensional perimeter of the set $E_{(0,y)}^{e_1}$ is greater that $2$, see \cite{Fusco2015}[Theorem 2.5]. With the same argument we obtain that  
\begin{equation}\label{03032023porpform123}
	\sup \left\{ \int_{E} \frac{\partial T_2}{\partial y} dxdy\colon T_2 \in C^1_{c}(\R^2,\R), \, \vert T_2 (z)\vert \leq 1, \, \forall z \in \R^2\right\}\geq 2 \mathcal{H}^{1}(\pi_{1}(E)),
\end{equation}
Hence the thesis.
\end{proof} 
We are now in a position to prove the equivalence of the problem \eqref{aaaaminimoprimadirettangolo} and \eqref{minimoprimadirettangolo}.
\begin{theorem}\label{thm-equiv}
	Let $R \subset \R^2$ be a rectangle such that $ \vert R \vert=1$.
	The problem \eqref{aaaaminimoprimadirettangolo} and \eqref{minimoprimadirettangolo} are equivalent.
\end{theorem}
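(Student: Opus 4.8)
The plan is to establish the two inequalities between the minimum values of \eqref{minimoprimadirettangolo} and \eqref{aaaaminimoprimadirettangolo}, keeping track of the fact that an optimal competitor in \eqref{minimoprimadirettangolo} may be taken to be a centered rectangle. One inequality is immediate, since every rectangle $\overline R$ with $\vert\overline R\vert=1$ and $\mathrm{Bar}(\overline R)=(0,0)$ is admissible in \eqref{minimoprimadirettangolo}; hence the minimum of \eqref{minimoprimadirettangolo} is at most that of \eqref{aaaaminimoprimadirettangolo}. The whole content is the reverse inequality, and for this I would show that to every admissible $E$ in \eqref{minimoprimadirettangolo} there corresponds a centered unit-area rectangle $\overline R$ with $\mathcal F_\tau(\overline R,R)\le\mathcal F_\tau(E,R)$.

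First I would symmetrize $E$ in the two coordinate directions. By Proposition \ref{steinersim} the anisotropic perimeter does not increase under Steiner symmetrization in the directions $\pm e_1,\pm e_2$, and by Proposition \ref{decresdisp} the dissipation $\mathcal D(\cdot,R)$ against the centered rectangle $R$ does not increase either, while Steiner symmetrization preserves the measure. Applying it successively in $e_1$ and $e_2$ produces a set $E^\ast$ with $\vert E^\ast\vert=1$ that is symmetric under both $x\mapsto-x$ and $y\mapsto-y$, so $\mathrm{Bar}(E^\ast)=(0,0)$, and with $\mathcal F_\tau(E^\ast,R)\le\mathcal F_\tau(E,R)$. (Symmetrizing in $e_1$ does not destroy the symmetry already gained in $e_2$, since the horizontal slices of an $e_2$-symmetric set at heights $\pm y$ coincide.) It therefore suffices to treat doubly symmetric competitors.

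The geometric key is that a doubly symmetric set has a unimodal vertical profile: setting $m(x)=\mathcal L^1(\{y:(x,y)\in E^\ast\})$, double symmetry forces $m$ to be even and nonincreasing in $\vert x\vert$, with $\{m>0\}=(-a/2,a/2)$, where $a=\mathcal H^1(\pi_{1}(E^\ast))$, and $\max m=b=\mathcal H^1(\pi_{2}(E^\ast))$. For such a monotone profile the bound of Proposition \ref{Pminpi1pi2} is attained, $\mathrm P_{|\cdot|_1}(E^\ast)=2a+2b$, i.e. the perimeter equals that of the bounding rectangle $Q=[-a/2,a/2]\times[-b/2,b/2]$. Since $E^\ast\subseteq Q$ and $\vert E^\ast\vert=1$ we get $ab\ge1$; if $ab=1$ then $\vert Q\setminus E^\ast\vert=0$ and $E^\ast=Q$ is already the required rectangle. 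In the remaining case $ab>1$ the set $E^\ast$ lies strictly inside $Q$, and we dispose of a perimeter surplus $2(b-1/a)=2(ab-1)/a>0$ over the inscribed centered unit rectangle $\overline R_a=(-a/2,a/2)\times(-\tfrac1{2a},\tfrac1{2a})$.

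It remains to trade this perimeter surplus against the dissipation, which I expect to be the main obstacle. Writing $R=[-a_0/2,a_0/2]\times[-b_0/2,b_0/2]$ and introducing the signed weight $w(z)=d_\infty(z,\partial R)$ for $z\notin R$ and $w(z)=-d_\infty(z,\partial R)$ for $z\in R$, one checks that $\mathcal D(F,R)=\int_R d_\infty(z,\partial R)\,dz+\int_F w(z)\,dz$ for every unit-area $F$, so the required inequality $\mathcal F_\tau(\overline R,R)\le\mathcal F_\tau(E^\ast,R)$ reduces to estimating $\int_{\overline R}w-\int_{E^\ast}w$ by $\tau$ times the perimeter gap. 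The difficulty is that the area constraint couples the vertical slices: passing from $E^\ast$ to a rectangle redistributes mass between the tall central columns and the outer columns, and whether this lowers or raises $\int w$ depends on the location of that mass relative to $\partial R$, hence on the shape of $R$ itself; in particular the correct comparison rectangle need not be $\overline R_a$ and must be chosen according to the relative geometry of $E^\ast$ and $R$. I would resolve this using the explicit form $d_\infty((x,y),\partial R)=\min\{a_0/2-\vert x\vert,\,b_0/2-\vert y\vert\}$ on $R$, slicing the dissipation in the $x$-variable as in Proposition \ref{decresdisp} and exploiting the monotonicity of $m$, through a short case analysis on the position of $R$ relative to $Q$. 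Once a centered unit rectangle $\overline R$ with $\mathcal F_\tau(\overline R,R)\le\mathcal F_\tau(E^\ast,R)\le\mathcal F_\tau(E,R)$ is produced, the reverse inequality, and hence the equivalence of the two problems, follows.
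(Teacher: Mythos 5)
Your first half---the reduction to the doubly Steiner-symmetrized set $E^{*}$ and the perimeter comparison with the centered unit-area rectangle $\overline R=\pi_1(E^{*})\times[-\tfrac L2,\tfrac L2]$, $L=1/\mathcal H^1(\pi_1(E^{*}))$---is essentially the paper's argument (Propositions \ref{steinersim}, \ref{decresdisp}, \ref{Pminpi1pi2}, together with $E^{*}\subset\pi_1(E^{*})\times\pi_2(E^{*})$ to get $L\le\mathcal H^1(\pi_2(E^{*}))$). The genuine gap is the dissipation comparison $\mathcal D(E^{*},R)\ge\mathcal D(\overline R,R)$. You correctly single it out as the crux, but you only propose a strategy (a signed weight, slicing in $x$, an unspecified case analysis) and you even leave open whether the natural rectangle is the right competitor; as written, the reverse inequality between the two minima is not established. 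In the paper this step is closed by a direct mass rearrangement, not by trading the perimeter surplus against the dissipation. With $L\ge b$ ($b$ the vertical side of $R$), let $\mathcal A$ be the part of $E^{*}$ outside the strip $\R\times[-\tfrac L2,\tfrac L2]$, let $\mathcal C=\overline R\cap R\setminus E^{*}$ and let $\mathcal E=\bigl(\pi_1(E^{*})\times\bigl((-\tfrac L2,-\tfrac b2)\cup(\tfrac b2,\tfrac L2)\bigr)\bigr)\setminus E^{*}$; area preservation gives $\vert\mathcal A\vert=\vert\mathcal C\vert+\vert\mathcal E\vert$. The portion of $\mathcal A$ transported into $\mathcal C\subset R$ simply leaves the symmetric difference, so that part of the dissipation (an integral of a nonnegative function over $E\Delta R$) can only drop, while for the portion transported into $\mathcal E$ one uses the pointwise bounds $d_\infty(\cdot,\partial R)\ge\tfrac{L-b}2$ on $\mathcal A$ and $d_\infty(\cdot,\partial R)\le\tfrac{L-b}2$ on $\mathcal E$. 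Some case analysis is indeed unavoidable, but it is mild: when both projections of $E^{*}$ exceed the corresponding sides of $R$ one compares directly with $R$ itself via Proposition \ref{Pminpi1pi2}, and when $\mathcal H^1(\pi_2(E^{*}))$ rather than $\mathcal H^1(\pi_1(E^{*}))$ is the short projection one runs the symmetric argument in the other variable.

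A secondary inaccuracy: double symmetry does not force $m(x)=\mathcal L^1(\{y:(x,y)\in E^{*}\})$ to be nonincreasing in $\vert x\vert$ (four congruent squares centered at $(\pm1,\pm1)$ form a doubly symmetric set with $m(0)=0$). The monotonicity is true for your $E^{*}$, but it comes from the order of the symmetrizations: after symmetrizing in $e_1$, the vertical slice at $x$ is the superlevel set $\{y:\,w(y)\ge 2\vert x\vert\}$ of the width function $w$, and these sets are nested in $\vert x\vert$. This should be derived from the construction, not from the symmetry. Note also that the exact identity $\mathrm P_{\vert\cdot\vert_1}(E^{*})=2a+2b$ is not needed; the one-sided bound of Proposition \ref{Pminpi1pi2} already suffices for the perimeter comparison.
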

\begin{proof}
	Let $E \subset \mathbb{R}^2$ be such that $ \vert E \vert=1$. We define the set $E^s$  as the set obtained through the Steiner symmetrization with respect to $e_1$ and $e_2$; i.e., $ E^s= (E^{e_1})^{e_2}$. By Proposition \ref{steinersim} and  Proposition \ref{decresdisp} we obtain 
	\begin{equation}
		\mathrm{P}_{\vert \cdot \vert_1}(E^s ) + \frac{1}{\tau} \mathcal{D}(E^s, R) \leq \mathrm{P}_{\vert \cdot \vert_1}(E ) + \frac{1}{\tau} \mathcal{D}(E, R). 
	\end{equation} 
We first observe that if $ \mathcal{H}^1(\pi_{1}(E^s)) >a $ and $\mathcal{H}^1(\pi_{2}(E^s)) >b  $ then by Proposition \ref{Pminpi1pi2}
\begin{equation}
	 \mathrm{P}_{\vert \cdot \vert_1}(E^s ) + \frac{1}{\tau} \mathcal{D}(E^s, R) \geq 2 \mathcal{H}^1(\pi_{1}(E^s))+  2 \mathcal{H}^1(\pi_{2}(E^s)) \geq \mathrm{P}_{\vert \cdot \vert_1}(R ) + \frac{1}{\tau} \mathcal{D}(R, R).  
\end{equation}
We complete the proof fo the result only in the case
 $ \mathcal{H}^1(\pi_{1}(E^s)) <a $, the case   $ \mathcal{H}^1(\pi_{2}(E^s)) <b $ being analogous.
We define the rectangle
$$\overline{R}:= \pi_{1}(E^s) \times \left[-\frac{L}{2},\frac{L}{2} \right] \quad \text{ with $L$ such that  } \quad L \mathcal{H}^1(\pi_{1}(E^s))=1.$$
Note that $ \mathrm{Bar}(\overline{R})= \mathrm{Bar}(E_s)=(0,0)$.
We claim that 
\begin{equation}\label{27032023pomclaim1}
	\mathrm{P}_{\vert \cdot \vert_1}(E^s) \geq 2 \mathcal{H}^1(\pi_{1}(E^s))+  2 L= \mathrm{P}_{\vert \cdot \vert_1}(\overline{R}).
\end{equation}
By the Proposition \ref{Pminpi1pi2} we have
\begin{equation}\label{27032023pom09}
	\mathrm{P}_{\vert \cdot \vert_1}(E^s) \geq 2 \mathcal{H}^1(\pi_{1}(E^s))+  2 \mathcal{H}^1(\pi_{2}(E^s)).
\end{equation}
 Up to a  null set we have that 
$$E^s \subset \pi_1(E^s) \times \pi_{2}(E^s)$$
 and then 
\begin{equation}\label{27032023p1608}
 \mathcal{H}^1(\pi_{1}(E^s)) \mathcal{H}^1(\pi_{2}(E^s)) \geq \vert E^s \vert =1= \mathcal{H}^{1}(\pi_{1}(E^s)) L \implies L \leq \mathcal{H}^1(\pi_{2}(E^s))	.
\end{equation}
Hence \eqref{27032023pomclaim1} follows from \eqref{27032023p1608} and \eqref{27032023pom09}. 
We now claim that 
\begin{equation}\label{tc270320231640}
	\mathcal{D}(E^s,R) \geq \mathcal{D}(\overline{R},R).
\end{equation}
We define the sets
\begin{equation}
	\mathcal{A}:= E^s \cap   \R \times \bigg(-\infty,-\frac{L}{2}\bigg) \cup \bigg(\frac{L}{2}, +\infty\bigg) \quad \mathcal{B}:= E^s \cap \R \times \bigg(-\frac{L}{2},-\frac{b}{2}\bigg) \cup \bigg(\frac{b}{2},\frac{L}{2}\bigg)
\end{equation}
and 
\begin{equation}
	\mathcal{C}:= \overline{R} \cap R \setminus E^s  \quad \mathcal{D} := R \setminus \overline{R}  \quad \mathcal{E}:= \pi_1(E^s) \times\bigg( -\frac{L}{2}, -\frac{b}{2}\bigg) \cup \bigg( \frac{b}{2}, \frac{L}{2} \bigg) \setminus E^s
\end{equation}
and we have
\begin{equation}\label{27032023sera1}
	R \Delta E^s = \mathcal{A} \cup \mathcal{B} \cup \mathcal{C} \cup \mathcal{D} \quad R \Delta \overline{R}= \mathcal{B} \cup \mathcal{D} \cup \mathcal{E}.
\end{equation}
We observe that $$ \vert E^s \vert= \vert \overline{R}\vert \implies \vert \mathcal{A} \vert= \vert \mathcal{E} \cup  \mathcal{C} \vert,$$
and then we define $ \mathcal{A}' $ and $\mathcal{A}''$ measurable set such that $ \mathcal{A} = \mathcal{A}' \cup \mathcal{A}''$ and $ \vert \mathcal{A}' \vert = \vert \mathcal{C} \vert $, $ \vert \mathcal{A}'' \vert = \vert \mathcal{E} \vert $. Note that the construction of $ \mathcal{A}'$ straightforward and can be obtained as follows. We define the continuous function $f(t):= \vert \mathcal{A} \cap \R \times (-\infty,-t) \cup (t,\infty) \vert $  and observe that $f(\frac{\mathcal{H}^1(\pi_{2}(E^s))}{2})= 0$, $f(\frac{b}{2})= \vert \mathcal{A} \vert $. Hence there exists $s\in \R$ such that $f(s)= \vert \mathcal{C} \vert $ and $\mathcal{A}':= \mathcal{A} \cap \R \times (-\infty,-s) \cup (s,\infty)$, $\mathcal{A}'':= \mathcal{A} \setminus \mathcal{A}'$. 
By the construction of $\mathcal{A}' $ and $ \mathcal{A}''$ it holds that
$$\text{ for all }(x',y') \in \mathcal{A}' \text{ and for all }   (x'',y'') \in\mathcal{A}'' \implies y' \geq y''.$$
We introduce a new set $ \hat{E}:= \mathcal{C} \cup E^s \setminus \mathcal{A}' $ and observe that 
\begin{equation}\label{27032023sera2}
	R \Delta \hat{E}= \mathcal{A}'' \cup \mathcal{B} \cup \mathcal{D} \subset  \mathcal{A} \cup \mathcal{B} \cup \mathcal{C} \cup \mathcal{D}= R \Delta E^s,
\end{equation} 
which in turn implies that  
\begin{equation}
\mathcal{D}(\hat{E},R)	=\int_{R \Delta \hat{E}} d_{\infty}((x,y),\partial R)dxdy \leq \int_{R \Delta E^s} d_{\infty}((x,y),\partial R) = \mathcal{D}(E^s,R).
\end{equation}
We are left to prove that 
\begin{equation}\label{27032023sera4}
	\mathcal{D}(\overline{R},R) \leq \mathcal{D}(\hat{E},R).
\end{equation} 
We observe that
\begin{equation}
	\text{for all } z'' \in \mathcal{A}'' \text{ and for all } z \in\mathcal{E} \implies d_{\infty}(z,\partial R) \leq \frac{L-b}{2} \leq d_{\infty}(z'',\partial R)
\end{equation}
and since $ \vert \mathcal{E} \vert= \vert \mathcal{A}''\vert$ we have
\begin{equation}\label{27032023sera3}
	\int_{\mathcal{E}} d_{\infty}(z,\partial R)dz \leq \int_{\mathcal{A}''} d_{\infty}(z,\partial R) dz.
\end{equation}
Therefore by \eqref{27032023sera2} and \eqref{27032023sera3} we obtain
\begin{equation}
	\begin{split}
		\mathcal{D}(\overline{R},R) =& \int_{\mathcal{D}}d_{\infty}(z,\partial R)dz + \int_{\mathcal{B}}d_{\infty}(z,\partial R)dz+ \int_{\mathcal{E}}d_{\infty}(z,\partial R)dz \\
		 \leq &\int_{\mathcal{D}}d_{\infty}(z,\partial R)dz+ \int_{\mathcal{B}}d_{\infty}(z,\partial R)dz+ \int_{\mathcal{A}''}d_{\infty}(z,\partial R)dz = \mathcal{D}(\hat{E}, R)
	\end{split}
\end{equation}
which prove the claim. Eventually  by  \eqref{27032023pomclaim1}, \eqref{tc270320231640} we obtain the thesis.
\end{proof}

In the next theorem we want to compute the minimizer of the problem \eqref{aaaaminimoprimadirettangolo}, namely \begin{equation*} \min \left\{ \mathrm{P}_{\vert \cdot \vert_{1}}(\overline{R})+ \frac{1}{\tau} \mathcal{D}(\overline{R},R)\colon \, \overline{R}\subset \R^2 \text{ be a rectangle and } \vert \overline{R} \vert=1, \, \mathrm{Bar}(\overline{R})=(0,0)\right\},
\end{equation*}
where we remind the reader that $R \subset \R^2$ is a rectangle with horizontal sidelength $a$, vertical sidelength $b$, $\mathrm{Bar}(R)=(0,0)$, $\vert R \vert=ab=1$. Without loss of generality we moreover assume that $b\leq a$. 
\begin{theorem}\label{THMesistenzadeiminimi}
	Let $a,b \in \R$ be such that $0<b\leq a$ and $ab=1$ and let us consider $R = \left[-\frac{a}{2}, \frac{a}{2}\right] \times \left[-\frac{b}{2}, \frac{b}{2}\right]$.
	 For all $\Lambda>a+b$ there exists $ \tau_0:= \tau_0(\Lambda)$ such that for every $ \tau< \tau_0$ there exists a unique minimizer $R'$ of 
	\begin{equation*} \min \left\{ \mathrm{P}_{\vert \cdot \vert_{1}}(\tilde{R})+ \frac{1}{\tau} \mathcal{D}(\tilde{R},R)\colon \, \tilde{R}\subset \R^2 \text{ be a rectangle and } \vert \tilde{R} \vert=1, \, \mathrm{Bar}(\tilde{R})=0\right\}.
	\end{equation*}
Moreover we have $R^{'}= \left[-\frac{a'}{2}, \frac{a'}{2}\right] \times \left[-\frac{b'}{2}, \frac{b'}{2}\right]$ with
\begin{equation}
\begin{split}
	& a'= a-2x(\tau) \text{ with } x(\tau)= x' \tau+ O(\tau^2) \text{ and } x'=\frac{2(a-b)}{b(a+b)},\\
    &  b'= b+2y (\tau) \text{ with } y(\tau)= \frac{bx(\tau) }{a-2x(\tau)} \\
    &  a'+b'\leq a+b ,\, \vert a'-a \vert \leq \tau C(\Lambda),\, \vert b'-b \vert \leq \tau C(\Lambda) \text{ and } \vert R \Delta R' \vert \leq \tau C(\Lambda).
\end{split}
\end{equation}
\end{theorem}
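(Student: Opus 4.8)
The plan is to use Theorem \ref{thm-equiv} to reduce the problem to the one–parameter family of rectangles with sides parallel to the axes, centred at the origin and of unit area. Writing a competitor as $\tilde R=[-\tilde a/2,\tilde a/2]\times[-\tilde b/2,\tilde b/2]$ with $\tilde a\tilde b=1$, I parametrize it by $x:=(a-\tilde a)/2$, so that $\tilde a=a-2x$; solving the area constraint $(a-2x)(b+2y)=1=ab$ gives $y=\tfrac{bx}{a-2x}$ and $\tilde b=b+2y$, which is exactly the relation $y(\tau)$ claimed in the statement. Since for an axis–parallel rectangle the four outer normals are $\pm e_1,\pm e_2$ with $|\pm e_i|_1=1$, the anisotropic perimeter is simply $\mathrm P_{|\cdot|_1}(\tilde R)=2\tilde a+2\tilde b=2a+2b-4x+\tfrac{4bx}{a-2x}$. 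Note $a\ge 1\ge b$ since $ab=1$ and $a\ge b$.

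Next I compute the dissipation explicitly by slicing. For $x>0$ (the relevant regime, see below) the set $\tilde R\setminus R$ consists of the two horizontal strips where $|z_1|\le\tilde a/2$ and $b/2\le|z_2|\le\tilde b/2$; these lie outside $R$, so $d_\infty(z,\partial R)=|z_2|-b/2$. The set $R\setminus\tilde R$ consists of the two vertical strips where $\tilde a/2\le|z_1|\le a/2$ and $|z_2|\le b/2$; these lie inside $R$, so $d_\infty(z,\partial R)=\min(a/2-|z_1|,\,b/2-|z_2|)$. Integrating (the $\min$ produces cubic corner corrections) yields
\begin{equation*}
\mathcal D(\tilde R,R)=bx^2-\tfrac23 x^3+\tfrac{b^2x^2}{a-2x},
\end{equation*}
so that the functional to minimize is $\Phi(x)=2a+2b-4x+\tfrac{4bx}{a-2x}+\tfrac1\tau\big(bx^2-\tfrac23x^3+\tfrac{b^2x^2}{a-2x}\big)$ on $[0,a/2)$. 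For $x\le 0$ one has $\tilde a\ge a\ge 1$, hence $\mathrm P_{|\cdot|_1}(\tilde R)=2\tilde a+2/\tilde a\ge 2(a+b)=\Phi(0)$ and $\mathcal D\ge 0$, so $\Phi(x)\ge\Phi(0)$; thus any minimizer has $x\ge 0$ and I may restrict to $[0,a/2)$.

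For existence and uniqueness I proceed as follows. A direct differentiation gives the perimeter derivative $-4+4/a^2=4(b^2-1)\le 0$ (strict when $a>b$) and $\mathcal D'(0)=0$, whence $\Phi'(0)\le 0$ and the minimizer satisfies $x>0$. Since $\mathcal D>0$ on $(0,a/2)$ and $\Phi(x)\to+\infty$ as $x\to a/2^-$, on every $[\delta,a/2)$ one has $\mathcal D\ge c_\delta>0$, so $\Phi\ge c_\delta/\tau$ there; as $\Phi(0)=\mathrm P_{|\cdot|_1}(R)=2(a+b)$ is a fixed competitor value, for $\tau$ small the global minimizer must lie in $[0,\delta)$. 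Using the identity $2(a-2x)^2+8x(a-x)=2a^2$ I obtain the clean expression $\mathcal D''(x)=2b-4x+2/(a-2x)^3$, so $\mathcal D''(0)=2b+2b^3>0$; choosing $\delta=\delta_0(a,b)$ small makes $\mathcal D''>0$, hence $\Phi$ strictly convex on $[0,\delta_0)$, which forces a unique minimizer there. I expect the \emph{global} uniqueness to be the main obstacle: because $\mathcal D$ is not monotone when $a-b$ is large ($\mathcal D'$ changes sign at an $O(1)$ value of $x$), $\Phi'$ can have spurious zeros, and these must be ruled out by the energy–scale separation, namely that the genuine minimizer carries energy $2(a+b)+O(\tau)$ while any critical point sitting at $O(1)$ distance carries dissipation energy of order $1/\tau$.

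Finally, to obtain the asymptotics and the estimates I rescale $x=\tau\xi$ and apply the implicit function theorem to $\Phi'(\tau\xi)=0$; the limit equation as $\tau\to 0$ is $4(b^2-1)+2b(1+b^2)\xi=0$, whose $\xi$–derivative $2b(1+b^2)$ is nonzero, giving a unique root $\xi_0=\tfrac{2(1-b^2)}{b(1+b^2)}=\tfrac{2(a-b)}{b(a+b)}=x'$ and hence $x(\tau)=x'\tau+O(\tau^2)$, with $y(\tau)=\tfrac{bx(\tau)}{a-2x(\tau)}$. The inequality $a'+b'\le a+b$ is immediate because $R$ (that is, $x=0$) is admissible, so $\mathrm P_{|\cdot|_1}(R')=2(a'+b')\le\Phi(x(\tau))\le\Phi(0)=2(a+b)$; and $|a'-a|=2x(\tau)$, $|b'-b|=2y(\tau)$, $|R\triangle R'|=2bx(\tau)+2\tilde a\,y(\tau)$ are all $O(\tau)$ with constant $C(\Lambda)$, since $a+b<\Lambda$ and $ab=1$ bound $a,b$ and $1/b$, hence $x'$. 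The threshold $\tau_0(\Lambda)$ is chosen uniformly on this range of $a,b$ so that $x(\tau)<\delta_0$ and the localization argument of the previous paragraph applies.
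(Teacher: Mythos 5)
Your proposal is correct and shares the paper's overall skeleton: the same parametrization by $x$ with $y=bx/(a-2x)$, the same explicit dissipation (your formula $bx^2-\tfrac23x^3+\tfrac{b^2x^2}{a-2x}$ is exactly \eqref{dissipazionebuona} after substituting the area constraint), the same minimality comparison ruling out $x<0$ (the paper's Step 1), and an implicit-function-theorem expansion for the optimal $x$. Where you genuinely diverge is in the existence/uniqueness analysis (the paper's Step 3): the paper studies the quartic numerator \eqref{derivatabuona12} of $\tfrac{d}{dx}E(x,y(x))$ over all of $[0,a/2)$, which forces a separate dissipation formula \eqref{dissipazionecattiva} for $x\geq b/2$ and a case-by-case discussion of the roots of the cubic $p$ according to the sign of $2a-3b$, followed by an energy comparison between $\bar x(\tau)$ and $b/2$. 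You instead localize first — on $[\delta,a/2)$ the dissipation is bounded below by $c_\delta(\Lambda)>0$, so the energy there exceeds the competitor value $2(a+b)$ once $\tau<c_\delta/(2\Lambda)$ — and then get uniqueness on $[0,\delta)$ from strict convexity, using the clean identity $\mathcal D''(x)=2b-4x+2/(a-2x)^3$ (which indeed follows from $(a-2x)^2+4x(a-x)=a^2$ and $a^2b^2=1$) together with $P''=4/(a-2x)^3>0$. This buys you a shorter argument that never needs the $x\geq b/2$ dissipation formula or the root analysis, and it also disposes of the "spurious critical points" you worry about: they carry energy of order $1/\tau$ and so are never global minimizers, which is precisely your localization step. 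Two small points of care: your displayed formula for $\mathcal D$ is valid only for $0\leq x\leq b/2$ (for larger $x$ the corner triangles are truncated, as in \eqref{dissipazionecattiva}), so $\Phi$ should be understood as defined by that formula only on the localized interval, with the crude lower bound $\mathcal D\geq c_\delta$ used elsewhere; and the implicit function theorem in the rescaled variable $\xi$ must be applied with constants uniform over $\{ab=1,\ a+b\leq\Lambda\}$, which holds because $\partial_\xi$ of the limit equation equals $2b(1+b^2)\geq 2/\Lambda$ — this is the uniformity the paper tracks explicitly through its $\hat\delta(\Lambda)$, $\tilde\delta(\Lambda)$.
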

\begin{proof}
	We divide the proof in to several steps. \\
	\textit{Step 1)}\\
	In this step we show that if $R'=\left[-\frac{a'}{2}, \frac{a'}{2}\right] \times \left[-\frac{b'}{2}, \frac{b'}{2}\right]$ is a minimizer 
then
	$$ a'< a \text{ and } b'>b.$$
%
%
%

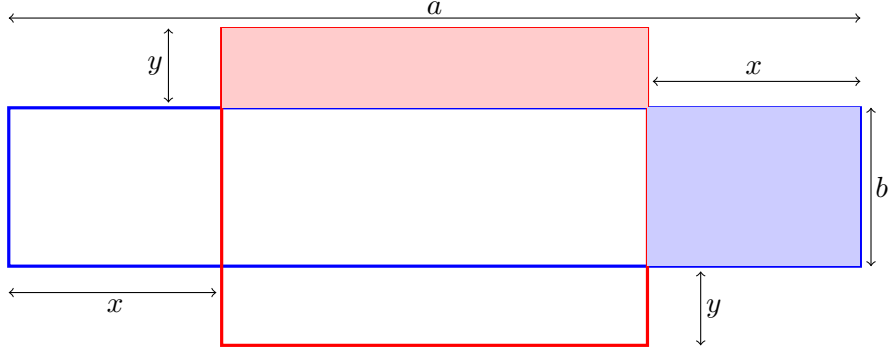
\begin{figure}[h!]
    \centering
    \begin{tikzpicture}[scale=0.7]
        \draw[blue,very thick] (-8,-1.5) rectangle (8,1.5);
        \draw[red,very thick] (-4,-3) rectangle (4,3);
        
        \draw [<->] (-8,-2) -- (-4.1,-2);
        \draw (-6,-2.25) node{$x$};
        
        \draw [<->] (8,2) -- (4.1,2);
        \draw (6,2.25) node{$x$};
        
        \draw [<->] (5,-1.6) -- (5,-3);
        \draw (5.25,-2.3) node{$y$};
        
        \draw [<->] (-5,1.6) -- (-5,3);
        \draw (-5.25,2.3) node{$y$};
        
        \draw [black,very thick] (0,2.1) node[fill=white]{$R_2$};
        \draw [black,very thick] (6,0) node[fill=white]{$R_1$};
        
        \draw [<->] (-8,3.2) -- (8,3.2);
        \draw[black, very thick] (0,3.4) node{$a$};
        
        \draw[<->] (8.2,-1.5) -- (8.2,1.5);
        \draw[black, very thick] (8.4,0) node{$b$};
        
        \fill[red!20,opacity=0.3] (-4,1.5) rectangle (4,3);
        \fill[blue!20,opacity=0.3] (4,-1.5) rectangle (8,1.5);
    \end{tikzpicture}
    \caption{The rectangle $R$, $\tilde{R}$, $R_1$, and $R_2$.}
    \label{fig1}
\end{figure}

	We argue by contradiction. Let us suppose $a'= a+x $ for some $x>0$ and let $b'$ be such that $ab=a'b'$, that is to say $ b'= \frac{ab}{a+x}$. Then, using the assumption $b \leq a$ we have that
	\begin{equation}\label{27032023pomeriggio1}
		  a+b \leq a+ x+ \frac{ab}{a+x}= a'+b'.
	 \end{equation}
Therefore by \eqref{27032023pomeriggio1} we get
	\begin{equation}
\mathrm{P}_{\vert \cdot \vert_1}(R)+ \frac{1}{\tau} \mathcal{D}(R,R) =2a +2b \leq 2a'+2b' < \mathrm{P}_{\vert \cdot \vert_1}(R')+ \frac{1}{\tau} \mathcal{D}(R',R),
	\end{equation}
which contradicts  the minimality of $R'$. \\
	\textit{Step 2)}\\
	In this step we show that our minimum problem can by studied by analyzing a function of two real variables.  From the previous step we can assume that a competitor of our minimum problem can be written as follows
	$$\tilde{R}= \left[-\frac{a}{2}+x, \frac{a}{2}-x\right] \times \left[-\frac{b}{2}-y,\frac{b}{2}+y\right]$$
	 for some $x \in [0, \frac{a}{2})$ and   
	$y= \frac{xb}{a-2x} $.
	The perimeter of the set $\tilde{R}$ is 
	\begin{equation}\label{perimeterinXY}
		\mathrm{P}_{\vert \cdot \vert_1}(\tilde{R})=2(a-2x+b+2y).
	\end{equation} 
	Using the symmetry of the set $ \tilde{R} \Delta R$, the dissipation is 
	\begin{equation}\label{fordisp03032023}
 \mathcal{D}(\tilde{R}, R)= \int_{\tilde{R}\Delta R} d_{\infty}(z,\partial R) dz= 2\int_{R_1} d_{\infty}(z,\partial R) dz+ 2 \int_{R_2} d_{\infty}(z,\partial R)dz
\end{equation}
	where 
	$$ R_1= \left[-\frac{a}{2}+x,\frac{a}{2}-x\right] \times \left[\frac{b}{2},\frac{b}{2}+y\right] \text{ and } R_2= \left[\frac{a}{2}-x,\frac{a}{2}\right] \times \left[-\frac{b}{2},\frac{b}{2}\right].$$
	We now compute explicitly the dissipation as a function of the variables $x $ and $y$.
	The first integral in \eqref{fordisp03032023} is
	\begin{equation}\label{for103032023}
		\int_{R_1} d_{\infty}((z_1,z_2),\partial R) dz_1dz_2= \int_{0}^{y} (a-2x)z_2dz_2= \frac{1}{2} y^2(a-2x).
	\end{equation}
The formula for the second integral in \eqref{fordisp03032023} depends on whether $ x < \frac{b}{2}$ or $x \geq \frac{b}{2}$. \\

\textit{Case $ x < \frac{b}{2}$ }. It holds that

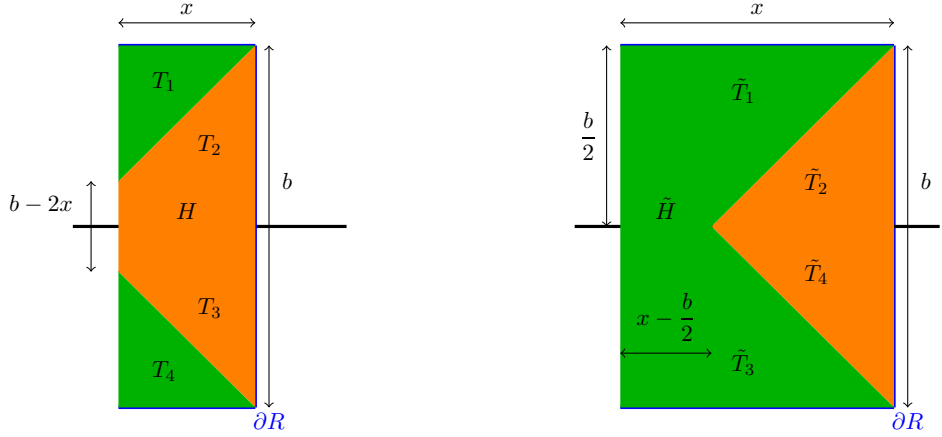
\begin{figure}[h!]
    \centering
    \begin{tikzpicture}[scale=0.6, every node/.style={scale=0.8}]
        \draw[black, very thick] (-12,0) -- (-6,0);
        \draw[blue,very thick] (-11,-4) -- (-8,-4);
        \draw[blue,very thick] (-11,4) -- (-8,4);
        \draw[blue,very thick] (-8,-4) -- (-8,4);
        \draw (-11,1) -- (-8,1);
        \draw (-11,-1) -- (-8,-1);
        
        \fill[orange, opacity=0.2]
            (-11,1) -- (-8,4) --(-8,-4)-- (-11,-1) --cycle;
        \fill[green!70!black, opacity=0.2]
            (-11,1) -- (-11,4) --(-8,4)--cycle;
        \fill[green!70!black, opacity=0.2]
            (-11,-1) -- (-11,-4) --(-8,-4)--cycle;
        
        \draw [black,very thick] (-10,-3.2) node{$T_4$};
        \draw [black,very thick] (-9,-1.8) node{$T_3$};
        \draw [black,very thick] (-10,3.2) node{$T_1$};
        \draw [black,very thick] (-9,1.8) node{$T_2$};
        \draw [black,very thick] (-9.5,0.35) node{$H$};
        \draw [blue,very thick] (-7.7,-4.3) node{$\partial R$};
        
        \draw [<->] (-7.7,-4) -- (-7.7,4);
        \draw (-7.3,1) node{$b$};
        \draw [<->] (-11.6,-1) -- (-11.6,1);
        \draw (-12.7,0.5) node{$b-2x$};
        \draw [<->] (-11,4.5) -- (-8,4.5);
        \draw (-9.5,4.8) node{$x$};
        
        \draw[blue,very thick] (0,-4) -- (6,-4);
        \draw[blue,very thick] (6,-4) -- (6,4);
        \draw[blue,very thick] (0,4) -- (6,4);
        \draw[black,very thick] (-1,0) -- (7,0);
        \draw (2,4)--(2,-4);
        
        \fill[orange, opacity=0.2]
            (6,-4) -- (6,4) --(2,0) --cycle;
        \fill[green!70!black, opacity=0.2]
            (2,0)--(6,4) -- (0,4)--(0,-4) -- (6,-4)--cycle;
        
        \draw [black,very thick] (1,0.4) node{$\tilde{H}$};
        \draw [black,very thick] (2.7,3) node{$\tilde{T}_1$};
        \draw [black,very thick] (2.7,-3) node{$\tilde{T}_3$};
        \draw [black,very thick] (4.3,1) node{$\tilde{T}_2$};
        \draw [black,very thick] (4.3,-1) node{$\tilde{T}_4$};
        
        \draw [<->] (0,4.5) -- (6,4.5);
        \draw (3,4.8) node{$x$};
        \draw [<->] (6.3,4) -- (6.3,-4);
        \draw (6.7,1) node{$b$};
        \draw [<->] (-0.3,4) -- (-0.3,0);
        \draw (-0.7,2) node{$\displaystyle\frac{b}{2}$};
        \draw [<->] (0,-2.8) -- (2,-2.8);
        \draw (1,-2) node{$x-\displaystyle\frac{b}{2}$};
        \draw [blue,very thick] (6.3,-4.3) node{$\partial R$};
    \end{tikzpicture}
    \caption{Nel lato sinistro il caso $x \leq \frac{b}{2}$ e nel lato destro il caso $x > \frac{b}{2}$.}
    \label{fig2}
\end{figure}

\begin{equation}\label{08032023asdasd}
	\begin{split}
	\int_{R_2} d_{\infty}((z_1,z_2),\partial R) dz_1 dz_2=&  \int_{T_1} d_{\infty}(z,\partial R)dz+  \int_{T_2} d_{\infty}(z,\partial R)dz \\
	&+ \int_{T_3} d_{\infty}(z,\partial R)dz+ \int_{T_4} d_{\infty}(z,\partial R)dz+ \int_{H} d_{\infty}(z,\partial R)dz.
\end{split}
\end{equation}
Here $T_1$ denotes the triangle with vertices 
$$\left\{\left(\frac{a}{2}-x, \frac{b}{2}-x\right), \, \left(\frac{a}{2}, \frac{b}{2}\right),\, \left(\frac{a}{2}-x, \frac{b}{2}\right)\right\}, $$
 $T_2 $ denotes the triangle with vertices
$$ \left\{\left(\frac{a}{2}-x, \frac{b}{2}-x\right), \, \left(\frac{a}{2}, \frac{b}{2}\right),\, \left(\frac{a}{2}, \frac{b}{2}-x\right)\right\}, $$
 while $T_3$ and $T_4$ are the triangles obtained reflecting $T_1$ and $T_2$ with respect to the horizontal axis, respectively.
  Finally, $H$ denotes the rectangle with vertices
$$ \left\{\left(\frac{a}{2}-x, \frac{b}{2}-x\right), \, \left(\frac{a}{2}, \frac{b}{2}-x\right),\, \left(\frac{a}{2}-x, -\frac{b}{2}+x\right), \, \left(\frac{a}{2}, -\frac{b}{2}+x\right)\right\}. $$
By a change of variables we observe that for every $i \in \{1,2,3,4\}$
$$  \int_{T_i} d_{\infty}(z,\partial R)dz=\int_{T} d_{\infty}(z,[0,x]\times \{0\}) dz= \int_{T} z_2 dz_1 dz_2$$
where $T$ is a triangle with vertices $(0,0), (x,0),(0,x)$.
 Hence, by \eqref{08032023asdasd} and the formula above, we obtain that
 \begin{equation}\label{08032023ffor1}
 	\begin{split}
 		\int_{R_2}d_{\infty}(z,\partial R)dz&= 4 \int_{T} d_{\infty}(z,\partial R)dz+ \int_{H} d_{\infty}(z,\partial R)dz  \\
 		&= 4\int_{0}^x dz_1 \int_{0}^{x-z_1} z_2 dz_2+ \int_{\frac{a}{2}-x}^{\frac{a}{2}}\int_{-\frac{b}{2}+x}^{\frac{b}{2}-x}(z_1- \frac{a}{2}+x)dz_1d z_2 \\
 		&= \frac{2x^3}{3}+ \frac{x^2(b-2x)}{2}.
 	\end{split}
 \end{equation} 
eventually by \eqref{fordisp03032023}, \eqref{for103032023}, \eqref{08032023ffor1} we have
\begin{equation}\label{dissipazionebuona}
	D(\tilde{R},R)= \int_{\tilde{R}\Delta R}d_{\infty}(z,\partial R)dz= y^2(a-2x)+\frac{4}{3}x^3+x^2(b-2x).
\end{equation}
\textit{Case $x \geq \frac{b}{2}$}.
It holds that
\begin{equation}\label{08032023dopopranzo1}
\begin{split}
	\int_{R_2} d_{\infty}((z_1,z_2),\partial R) dz_1 dz_2=&  \int_{\tilde T_1} d_{\infty}(z,\partial R)dz+  \int_{\tilde T_2} d_{\infty}(z,\partial R)dz \\
	&+ \int_{\tilde T_3} d_{\infty}(z,\partial R)dz+ \int_{\tilde T_4} d_{\infty}(z,\partial R)dz+ \int_{\tilde H} d_{\infty}(z,\partial R)dz.
\end{split}
\end{equation}
Here $\tilde T_1$ denotes the triangle with vertices
$$ \left\{\left(\frac{a}{2}-\frac{b}{2}, 0\right),\,\left(\frac{a}{2}-\frac{b}{2}, \frac{b}{2}\right),\, \left(\frac{a}{2}, \frac{b}{2}\right)  \right\},$$
 $\tilde T_2 $ denotes the triangle with vertices 
$$ \left\{\left(\frac{a}{2}-\frac{b}{2}, 0\right),\,\left(\frac{a}{2}, 0\right),\, \left(\frac{a}{2}, \frac{b}{2}\right)  \right\}$$
and $\tilde T_3, \tilde T_4$ are the triangles obtained reflecting $\tilde T_1$, $ \tilde T_2$ with respect to the horizontal axis, respectively.
 Finally $\tilde H$ is the rectangle with vertices
$$ \left\{\left(\frac{a}{2}-x, -\frac{b}{2}\right),\,\left(\frac{a}{2}-x, \frac{b}{2}\right),\, \left(\frac{a}{2}-\frac{b}{2}, \frac{b}{2}\right), \, \left(\frac{a}{2}-\frac{b}{2}, -\frac{b}{2}\right)   \right\}.$$
By a change of variables for every $ i \in \{1,2,3,4\}$
$$  \int_{\tilde T_i} d_{\infty}(z,\partial R)dz=  \int_{\tilde T} d_{\infty}(z,[0,\frac{b}{2}]\times \{0\}) dz= \int_{\tilde T} z_2 dz_1 dz_2$$
where $\tilde T$ is the triangle with vertices $(0,0), (\frac{b}{2},0),(0,\frac{b}{2})$.
A straightforward computation as in the previous case gives
$$ \int_{R_2}d_{\infty}(z,\partial R)dz=-\frac{b^3}{24}+\frac{x b^2}{4}.$$
Therefore by \eqref{fordisp03032023}, \eqref{for103032023} and the equality above  
\begin{equation}\label{dissipazionecattiva}
	D(\tilde{R},R)= \int_{\tilde{R}\Delta R}d_{\infty}(z,\partial R)dz= y^2(a-2x)-\frac{b^3}{12}+\frac{x b^2}{2}.
\end{equation} 
 Eventually, by \eqref{perimeterinXY}, \eqref{dissipazionebuona}, \eqref{dissipazionecattiva}, we can write the energy $$ \tilde{R} \rightarrow \mathrm{P}_{\vert \cdot \vert_{1}}(\tilde{R})+ \frac{1}{\tau} \mathcal{D}(\tilde{R},R)$$ as a function of the variables $x$ and $y $ as follows
 \begin{equation}\label{energiacontinXY}
 	E(x,y):=
 	\left\{
 	\begin{aligned}
 		& 2(a-2x+b+2y)+\frac{1}{\tau} \left[y^2(a-2x)+\frac{4}{3}x^3+x^2(b-2x)\right], & \text{ if } 0 \leq x < \frac{b}{2} \, , \\
 		& 2(a-2x+b+2y)+\frac{1}{\tau} \left[y^2(a-2x)-\frac{b^3}{12}+\frac{b^2}{2}x\right], & \text{ if }   \frac{b}{2} \leq x < \frac{a}{2} \, . 
 	\end{aligned}
 	\right.
 \end{equation}
\\
\textit{Step 3)}\\
In this step we want to compute the minimzer of the function $ E$ under the 
 area constraint $y=y(x)=\frac{xb}{a-2x}$. We distinguish two cases. \\

\textit{Case $x \in [\frac{b}{2},\frac{a}{2})$}. 
 In this case the energy can be written as
$$E(x,y(x))= \frac{1}{\tau} \left[2a\tau +2b\tau- \frac{b^3}{12}\right]+ \frac{1}{\tau} \left[\frac{x(a(b^2-8\tau)+8\tau b+ 16 \tau x)}{2a-4x}\right].$$
We observe that
\begin{equation}\label{17052023pom1}
	\frac{d}{dx}E(x,y(x))= \frac{1}{\tau} \left[ \frac{a^2b^2-8\tau a^2+8 \tau ab +32 \tau a x -32 \tau x^2}{2a^2-8ax+8x^2}\right].
\end{equation} 
 We claim the existence of $\tau_1:=\tau_1(\Lambda)>0$ such that for all $ \tau < \tau_1$,  $ \frac{d}{dx}E(x,y(x))>0$ for every $x $.
 
To this end, since the denominator is positive, it is enough to prove  the positivity of the numerator in \eqref{17052023pom1} for $\tau$ small enough. The latter follows by the chain of inequalities 
\begin{equation}
	a^2b^2-8\tau a^2+8 \tau ab +32 \tau a x -32 \tau x^2 > a^2 b^2 -(8\tau a^2+8 \tau ab +32 \tau a x +32 \tau x^2) >0
\end{equation}
which holds true 
for $ \tau <\tau_1:= \Lambda^{-k_1}$ with  $k_1 \in \N $ such that
\begin{equation}
	(8\tau a^2+8 \tau ab +32 \tau a x +32 \tau x^2) < Ca^2\Lambda^{-k_1}<\frac{1}{2}< a^2b^2=1,
\end{equation}
where  we have used that $x<a$ and $b< a$. 
By the same argument we have that there exists $ \tau_2(\Lambda)$  such that for all $ \tau < \tau_2(\Lambda) $
\begin{equation*}\label{limiEx>b/2}
	\lim_{x \rightarrow \frac{a}{2}^{-}} E(x,y(x))= +\infty.
\end{equation*}
 By the previous argument the only minimizer of $x \rightarrow E(x,y(x))$ for $ x \in \left[\frac{b}{2},\frac{a}{2}\right) $ is $ x= \frac{b}{2}$ where the energy takes the value 
$$ E\left(\frac{b}{2}, y\left(\frac{b}{2}\right)\right)=\frac{1}{\tau} \left[ \frac{24\tau a^2 +2 a b^3-24\tau ab +b^4+24 \tau b^3}{12a-12b}\right] .$$
\textit{Case $ 0\leq x\leq \frac{b}{2}$}. 
 In this case we have
\begin{equation*}
\begin{split}
	E(x,y(x))=& \frac{1}{3 \tau (a-2x)}[6a^2 \tau +6 ab \tau +3ab x^2 \\ &-24a \tau x-2ax^3+3b^2 x^2-6bx^3+24\tau x^2 +4 x^4]
\end{split} 
\end{equation*}
and
\begin{multline}\label{derivatabuona12}
	\frac{d}{d x}E(x, y(x))= \frac{1}{\tau (a-2x)^2} \big[ -8 x^4 +x^3(2a+8b) + x^2(-8ab -2b^2 -16\tau -2a^2)\\ +x(2a^2b+2ab^2+16a \tau)+4ab \tau -4a^2 \tau\big].
\end{multline}
We now look for stationary points of the energy $E$.
To this end we investigate the zeros of the numerator of \eqref{derivatabuona12} that we denote by 
\begin{equation}\label{11092023pom2}
	f(\tau,x):= -8 x^4 +x^3(2a+8b) + x^2(-8ab -2b^2 -16\tau -2a^2) +x(2a^2b+2ab^2+16a \tau)+4ab \tau -4a^2\tau.
\end{equation}
It holds that
\begin{equation}
	\frac{\partial f}{\partial x} (\tau,x)= -32 x^3 +x^2(6a +24 b)+x(-16ab -4b^2 -32\tau -4 a^2)+(2a^2b+ 2ab^2 +16 a \tau)
\end{equation}
and 
\begin{equation}\label{28032023pome1}
	\frac{\partial^2 f}{\partial^2 x} (\tau,x)= -96 x^2 +6x(2a+8b)+2(-8ab-2b^2-2a^2)-32 \tau.
\end{equation}
We observe that there exists $C_1(\Lambda)>0$ such that
\begin{equation}\label{22032023form1}
	\max_{x \in [0,\frac{b}{2}]} \vert f(\tau,x)-f(0,x) \vert  \leq \tau C_1(\Lambda).
\end{equation}
By \eqref{22032023form1} 
there exists $\tau_3:= \tau_3(\Lambda)$ such that for all $\tau <\tau_3$ the points $(\tau,x)$ such that $f(\tau,x)=0$ are in a neighbourhood of the zeros of
  $ f(0,\cdot)$. 
We study the roots of the polynomial
$$f(0,x)=-8x^4+x^3(2a+8b)+x^2(-8ab-2b^2-2a^2)+x(2a^2b + 2ab^2).$$
We observe that they are either
 $x=0$ or the roots of the polynomial
$$ p(x):=-8 x^3+x^2(2a+8b)+x(-8ab-2b^2-2a^2)+(2a^2b + 2ab^2).$$
We note that thanks to the assumption on $a$ and $b$
$$ \frac{d}{dx} p(x)= -24 x^2 +x(4a+16b)-2a^2-8ab-2b^2 \neq 0.$$
Since  $\frac{d}{d x}p(0)<0 $ we have that $\frac{d}{d x}p(x)<0 $ for all $ x \in \left[0, \frac{b}{2}\right]$, hence
 $p(x)$ is not increasing. We have that $ p(0)= 2a^2b+2ab^2>0$ and that $p(\frac{b}{2})= \frac{1}{2} ab (2a-3b)$. If $2a-3b=0$ then $\frac{b}{2}$ is the unique root of $p$. In order to find the other roots the following two cases need to be
  discussed: $2a-3b>0$ and $2a-3b<0$. In the case $2a-3b>0$ we have
  $p(\frac{b}{2})>0$, hence $p$ has no root in $[0, \frac{b}{2}]$.  
  If instead $ 2a -3b<0$,  $p(\frac{b}{2})<0$.  Using that $ p(0)>0$ and that $p$ is strictly decreasing  there exists a unique $ \bar x \in (0, \frac{b}{2})$ such that $p(\bar x)=0$ and it holds
\begin{equation}\label{11092023pom1}
	p(x)>0 \text{ if } x \in [0,\bar x) \text{ and } p(x)<0 \text{ if } x \in \big(\bar x, \frac{b}{2}\big] .
\end{equation} 
Thanks to the discussion above the roots $x_i$ of $f(0,x)$ are
\begin{itemize}
	\item[i)] if $ 2a-3b<0$ then $x_1=0, \,\,x_2= \bar x$.
	\item[ii)] if $ 2a- 3b=0$ then $x_1=0,\,\, x_2=\frac{b}{2}$,
	\item[iii)] if $ 2a-3b>0$ then $ x_1=0$.
\end{itemize}
We have that 
\begin{equation}
	f(0,0)=0 \quad \text{and} \quad \frac{\partial f}{\partial x}  (0,0)= 2a^2b+2ab^2>0 \quad \text{for all } a,b>0. 
\end{equation}
In what follows we prove the thesis only in the \textit{Case i)}. We first claim that for all $\Lambda>a+b$ there exists $\hat \delta:= \hat\delta(\Lambda)$ such that for all $ a,b>0$ and $a+b< \Lambda$
\begin{equation}\label{21032021derivpos}
	\frac{\partial f}{\partial x}(\tau,x)>0 \quad\text{for all }(\tau,x) \in (-\hat \delta,\hat \delta)\times(-\hat \delta,\hat \delta). 
\end{equation}
As a consequence of this claim and the implicit function Theorem, for every $a,b>0$ and $a+b \leq \Lambda$ there exists $ \bar x: (-\frac{\hat \delta}{2},\frac{\hat \delta}{2}) \rightarrow \R$ smooth function such that $ f(\tau, \bar x(\tau))=0$ and $\bar x(0)=0$.
We begin by observing that for $ \tau,\vert x \vert \leq \delta $ we have
\begin{equation}
	\begin{split}
\frac{\partial f}{\partial x}(\tau,x)=	&	-32 x^3 +x^2(6a +24 b)+x(-16ab -4b^2 -32\tau -4 a^2)+(2a^2b+ 2ab^2 +16 a \tau)   \\
		\geq &-32 x^3 -x^2(6a +24 b)+x(-16ab -4b^2 -32\tau -4 a^2)+(2a^2b+ 2ab^2 )  \\
		\geq &-32 \delta^3 -\delta^2(6a +24 b+32)-\delta(+16ab +4b^2 +4 a^2)+(2a^2b+ 2ab^2 )\\
		&= h(\delta) +2ab(a+b) ,
	\end{split}
\end{equation}
where
\begin{equation}
	h(\delta):= -32 \delta^3 -\delta^2(6a +24 b+32)-\delta(+16ab +4b^2  +4 a^2).
\end{equation}
We check that there exists $\hat \delta:=\hat \delta(\Lambda)>0$ such that for all $ \delta < \hat \delta$ it holds that $-h(\delta) \leq 3  $.
To this ends it is enough to observe that $-h(\delta) \leq C( \delta^3+\Lambda \delta^2 + \delta \Lambda^2)  $.

As a consequence, since $ab=1$ implies $a+b>2$ we have that
\begin{equation}\label{28032023mattina1}
	\frac{\partial f}{\partial x}(\tau,x) \geq 2ab(a+b)-3\geq 1 \quad \text{ for all } \tau \in (-\hat \delta,\hat \delta), \; \text{ and for all } x \in ( -\hat \delta, \hat\delta ).
\end{equation}
Let now $(0,x_2) $ be such that $f(0,x_2)=0$. Hence by the very definition of $p$ we have that for all $ a,b>0$
\begin{equation}
	\frac{\partial f}{\partial x}(0,x_2)=x_2 \frac{d}{dx} p(x_2)<0.
\end{equation} 
Next we claim that for all $\Lambda>a+b$ there exist $\tilde{\delta}:=\tilde{\delta}(\Lambda)$ such that for all $a,b>0$ and $a+b< \Lambda$
\begin{equation}\label{22032023form4}
	\frac{\partial f}{\partial x}(\tau,x)<0 \quad \text{for all }(\tau,x) \in (-\tilde{\delta},\tilde{\delta})\times (x_2-\tilde{\delta},x_2+\tilde{\delta}).
\end{equation}
As consequence of this claim thanks to the implicit function Theorem we have that for every $a,b>0$ and $a+b \leq \Lambda$ there exists a smooth function  $ \tilde x: (-\frac{\tilde{\delta}}{2},\frac{\tilde{\delta}}{2}) \rightarrow \R$ such that $ f(\tau, \tilde x(\tau))=0$ and $\tilde x(0)=x_2$.
We now prove the claim. 
We first observe that for all $a,b$ it holds that $ x_2 > \Lambda^{-k_0}$ for $k_0 \in \N$ big enough. This is a consequence of the following estimate
$$p(\Lambda^{-k_0})>2 (a+b) -(\Lambda^{-3k_0}+\Lambda^{-2k_0}+\Lambda^{-k_0}) C(\Lambda^2+\Lambda)>0$$
and of formula \eqref{11092023pom1}.
We observe that there exists $ C_2(\Lambda)$ 
\begin{equation}\label{18052023mat1}
	\max_{x \in[0,\frac{b}{2}]}  \left| \frac{\partial f}{\partial x}(\tau,x)- \frac{\partial f}{\partial x}(0,x) \right| \leq \tau C_2(\Lambda)
\end{equation}
and that
\begin{equation}\label{18052023mat2}
	\begin{split}
	\frac{\partial f}{\partial x}(0,x_2)=&x_2 \frac{d}{dx} p(x_2)< \Lambda^{-k_0} \max_{x \in[0,\frac{b}{2}]} \frac{d}{d x} p(x) \\&= \frac{\Lambda^{-k_0}}{6}(-11a^2-4ab-4b^2)< \frac{-4 \Lambda^{-k_0}}{6}<0.
\end{split} 
\end{equation}
The claim follows thanks to \eqref{18052023mat1}, \eqref{18052023mat2}. Defining $\tau_0:= \frac{\min\{ \tilde{\delta}, \hat \delta\}}{2}$ we have proved that in \textit{Case i)} for all $ \tau < \tau_0$
\begin{equation}
	f(\tau,x) <0 \text{ for all } x \in (0, \bar x(\tau)) \cup (\tilde x(\tau),\frac{b}{2}) \text{ and } f(\tau,x)>0 \text{ for all } x \in (\bar x(\tau),\tilde x(\tau))) .
\end{equation}

Repeating the same argument as above in \textit{Case ii)} we have that there exists $\tau_0:= \tau_0(\Lambda)$ such that if $ \tau < \tau_0$
 then either $ \tilde x(\tau)< \frac{b}{2}$ and
\begin{equation}
	f(\tau,x) <0 \text{ for all } x \in (0, \bar x(\tau)) \cup (\tilde x(\tau),\frac{b}{2}) \text{ and } f(\tau,x)>0 \text{ for all } x \in (\bar x(\tau),\tilde x(\tau))) ,
\end{equation}
or  $\tilde x(\tau)\geq \frac{b}{2}$ and
\begin{equation}
	f(\tau,x) <0 \text{ for all } x \in (0,\bar x(\tau)) \text{ and } f(\tau,x) >0 \text{ for all } x \in (\bar x(\tau), \frac{b}{2}) .
\end{equation} 
If instead \textit{Case iii)} holds, then for all 
$ \tau < \tau_0$
\begin{equation}
	f(\tau,x) <0 \text{ for all } x \in (0,\bar x(\tau)) \text{ and } f(\tau,x) >0 \text{ for all } x \in (\bar x(\tau), \frac{b}{2}) .
\end{equation}
As a consequence of the previous results we obtain the existence of $ \tau_0$ such that for all $ \tau < \tau_0$ the minimizer of $x \rightarrow E(x,y(x))$ is either $\bar x(\tau)$ or $\frac{b}{2}$.
 We now compute the first coefficient of the Taylor expansion of the function $\tau \rightarrow \bar x(\tau)$ in the interval $ (-\tau_0,\tau_0)$ at the point $\tau=0$. Substituting $\bar x(\tau)= \bar x'(0) \tau +o(\tau)$ in \eqref{derivatabuona12} we have that 
\begin{equation}
	\frac{d}{d x}E(\bar x(\tau),y(\bar x(\tau)))=0 \iff o(\tau)+ \bar x'(0)(2a^2b +2ab^2)\tau+ 4ab\tau-4a^2\tau=0,
\end{equation}
hence
\begin{equation}\label{formulafondamentale1}
\bar x'(0)= \lim_{\tau \rightarrow 0^+}\frac{2(-b+a)}{b(a+b)}+ \frac{o(\tau)}{\tau(2a^2 b+2ab^2)}=\frac{2(-b+a)}{b(a+b)}.
\end{equation}
We check that $\bar x(\tau)$ is the only minimizer of $ x \rightarrow E(x, y(x))$ in $ [0,\frac{a}{2}]$. To this end it is enough to check that
\begin{equation}
	E(\bar x(\tau),y(\bar x(\tau)))< E\left(\frac{b}{2},y\left(\frac{b}{2}\right)\right) 
\end{equation}
or equivalently that
\begin{equation}
	\frac{6a^2 \tau + 6ab \tau + o(\tau)}{3\tau a+ o(\tau)}<\frac{1}{\tau} \left[ \frac{24\tau a^2 +2 a b^3-24\tau ab +b^4+24 \tau b^3}{12a-12b}\right]  \text{ for all } \tau< \tau_0.
\end{equation}
The above inequality holds true provided $\tau_0$ is chosen sufficiently small.
  For the time being we have proved that for all $\Lambda>a+b$ there exists $ \tau_0:= \tau_0(\Lambda)$ such that for all $\tau< \tau_0$ the only minimizer of the energy $ x \rightarrow E(x,y(x))$ is $\bar x(\tau)$ and moreover by \eqref{formulafondamentale1}
\begin{equation}\label{11092023pom4}
	\bar x(\tau)=  \tau\frac{2(-b+a)}{b(a+b)} + o(\tau).
\end{equation}
By \eqref{28032023mattina1} and \eqref{11092023pom2}, applying the implicit function Theorem, we have that for all $\tau \in [0,\tau_0]$
\begin{equation*}
	\left| \frac{d}{d \tau} \bar x(\tau) \right| = \left| \frac{\frac{\partial f}{\partial \tau}(\tau,\bar x(\tau))}{\frac{\partial f}{\partial x}(\tau,\bar x(\tau))}\right| \leq \vert -16(\bar x(\tau))^2+16a\bar x(\tau) +4ab -4a^2 \vert \leq C\Lambda^2.
\end{equation*}
Moreover, using that $ \frac{\partial^2 f}{\partial \tau^2 }=0$, the estimate above together with \eqref{28032023mattina1}, \eqref{28032023pome1} and the implicit function Theorem gives
\begin{equation}\label{11092023pom3}
	\left| \frac{d^2}{d \tau^2} \bar x(\tau) \right| = \left| \frac{-\frac{d}{d \tau} \bar x(\tau) \frac{\partial^2 f}{\partial x^2} (\tau, \bar x(\tau))+\frac{\partial^2 f}{\partial \tau^2} (\tau, \bar x(\tau)) }{ \left(\frac{\partial f}{\partial x}(\tau, \bar x(\tau)) \right)^2} \right| \leq C\Lambda^2.
\end{equation}
Thanks to \eqref{11092023pom3} and \eqref{11092023pom4} we can write that $\bar x(\tau)= \bar x'(0) \tau+ O(\tau^2)$, with $\bar x'(0)$ as in \eqref{formulafondamentale1}.
Recalling the area constraint $y=\frac{bx}{a-2x}$, we now define the function $ y(\tau):= \frac{b\bar x(\tau) }{a-2\bar x(\tau)}$. We eventually have that 
the minimizer $R'= \left[-\frac{a'}{2}, \frac{a'}{2}\right] \times \left[-\frac{b'}{2}, \frac{b'}{2}\right]$ satisfies the following properties:
\begin{equation}
	\begin{split}
		& a'= a-2 \bar x(\tau) ,\\
		&  b'= b+2y (\tau) , \\
		&  \vert a'-a \vert = 2\vert \bar x(\tau) \vert \leq \tau C(\Lambda) \text{ and } \vert b'-b \vert = 2 \vert y(\tau) \vert \leq \frac{b\tau C(\Lambda) }{\vert a-2 \bar \bar x(\tau) \vert }\leq \tau C(\Lambda), \\
		&  \vert R \Delta R' \vert = 2b\bar x(\tau)+ 2(a-2\bar x(\tau))y(\tau)= 4b \bar x(\tau) \leq \tau C(\Lambda).
	\end{split}
\end{equation}
 By the minimality of $R'$ we have
\begin{equation}
	\begin{split}
	2a'+2b' = &\, \mathrm{P}_{\vert \cdot \vert_{1}}(R') \leq  \mathrm{P}_{\vert \cdot \vert_{1}}(R')+ \frac{1}{\tau} \mathcal{D}(R',R) \\
	\leq &\, \mathrm{P}_{\vert \cdot \vert_{1}}(R)+ \frac{1}{\tau} \mathcal{D}(R,R) = 2a+2b \leq 2\Lambda,
\end{split}
\end{equation}
hence $ a'+b'\leq a+b \leq\Lambda$.
\end{proof}
\subsection{Flat $\mathrm{P}_{\vert \cdot \vert_1}$ area-preserving mean-curvature flow}
\begin{definition}\label{defflat}
	Let $R\subset \R^2$ be a rectangle with unitary area.
	By flat $ \mathrm{P}_{\vert \cdot \vert_1}$ area-preserving mean-curvature flow with initial datum $R_0$, we mean any family of sets $ \{E_t\}_{t \geq 0}$ such that   $\chi_{E_t} \in BV(\R^2)$ for all $ t \geq 0$ and
	$$ \lim_{k \rightarrow + \infty} \| \chi_{E_t^{(\tau_k)}}-\chi_{E_t}\|_{L^1(\R^2)}=0$$
	locally uniformly in $t$. In the formula above $ \{ E_{t}^{(\tau_k)}\}_{t\geq 0}$ denotes an approximate flat $\mathrm{P}_{\vert \cdot \vert_1}$ area-preserving mean-curvature flow with initial datum $ R$ and $\{\tau_k\}_{k \in \N}$ is a vanishing  sequence.
\end{definition}
\begin{theorem}\label{thm12092023}
	Let $R$ be a rectangle with horizontal sidelength $a_0$ and vertical sidelength $b_0$ such that $a_0 b_0=1$, $b_0 < a_0$ and $a_0+b_0<\Lambda$, for some $ \Lambda>0$. For any $ 0<\tau< \tau_0$ (here $\tau_0$ is as in Theorem \ref{THMesistenzadeiminimi} for $a=a_0$ and $b=b_0$) let $ \{R_t^{\tau}\}_{t\geq 0}$ be an approximate flat $\mathrm{P}_{\vert \cdot \vert_1}$ area-preserving mean-curvature flow with initial datum $R$.
	Then, for all $k\geq 1$
	\begin{equation}
		R_t^{\tau}= \left[-\frac{a((k+1)\tau)}{2}, \frac{a((k+1) \tau)}{2}\right] \times \left[-\frac{b((k+1)\tau)}{2}, \frac{b((k+1) \tau)}{2}\right] \quad \forall t \in [(k+1)\tau,(k+2)\tau),
	\end{equation}
where $a(k\tau)$ and $ b(k\tau)$ are obtained by recurrence setting $a(0)=a_0$, $b(0)=b_0$ and
\begin{equation}
	\begin{split}
		& a((k+1)\tau )= a(k\tau)-2x(k\tau) \text{ with } x(k\tau)= \frac{2(a(k\tau)-b(k\tau))}{b(k\tau)(a(k\tau)+b(k\tau))} \tau+ O(\tau^2) \\
		&  b((k+1)\tau)= b(k \tau)+2y (k\tau) \text{ with } y(k\tau)= \frac{b(k\tau)x(k\tau) }{a(k\tau)-2x(k\tau)}, 
	\end{split}
\end{equation}
and share the following properties:
\begin{equation}
	\begin{split}
		&  a((k+1)\tau) b((k+1)\tau)=1  \text{, } a((k+1)\tau)+b((k+1)\tau)<a(k\tau)+b(k\tau) < \Lambda, \\
		& \vert a((k+1)\tau)- a(k\tau) \vert \leq \tau C(\Lambda) \text{, }  \vert b((k+1)\tau)- b(k\tau) \vert \leq \tau C(\Lambda).
	\end{split}
\end{equation}
Moreover there exists a unique flat $\mathrm{P}_{\vert \cdot \vert_1}$ area-preserving mean-curvature flow $ \{R_t\}_{t \geq 0}$ such that, for all $t \geq0$, $R_t=\left[-\frac{\tilde a(t)}{2}, \frac{\tilde a(t)}{2}\right] \times \left[-\frac{\tilde b(t)}{2},\frac{\tilde b(t)}{2}\right]$ with $ \tilde{a}(0)=a_0, \tilde{b}(0)=b_0$. The following properties of the flow hold true:
\begin{equation}
	\begin{split}
&	\lim_{\tau \rightarrow 0} \vert R_{t}^{\tau} \Delta R_t \vert=0, \quad \lim_{\tau \rightarrow 0} a^{\tau}(t)=\tilde a(t), \quad \lim_{\tau \rightarrow 0} b^{\tau}(t)=\tilde b(t), \\
 & \frac{d}{d t} \tilde a(t)= \frac{-4}{\tilde b(t)}+\frac{8}{\tilde a(t)+\tilde b(t)}, \quad \frac{d}{d t} \tilde b(t)= \frac{-4}{\tilde a(t)}+\frac{8}{\tilde a(t)+\tilde b(t)} 
\end{split}
\end{equation}
where for all $t\geq0$ we have defined
$ a^\tau (t):=a((k+1)\tau)$ for all $t \in [(k+1)\tau,(k+2)\tau)$ and $ b^\tau (t):=b((k+1)\tau)$ for all $t \in [(k+1)\tau,(k+2)\tau)$.
\end{theorem}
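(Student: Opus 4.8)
The plan is to construct the discrete-in-time flow by iterating the one-step result and then to pass to the limit $\tau\to 0$ through a compactness-plus-uniqueness argument. For the construction I would argue by induction on $k$: suppose $E_{k\tau}^{\tau}$ is a centered rectangle of unit area with sides $a(k\tau)\ge b(k\tau)$ and $a(k\tau)+b(k\tau)<\Lambda$. By Theorem \ref{thm-equiv} the minimization defining $E_{(k+1)\tau}^{\tau}$ over all unit-area sets reduces to the minimization over centered unit-area rectangles, and by Theorem \ref{THMesistenzadeiminimi} — applicable with the \emph{same} threshold $\tau_0=\tau_0(\Lambda)$ at every step, since $a(k\tau)+b(k\tau)$ stays below $\Lambda$ — this has a unique minimizer, a centered rectangle whose sides obey the stated first-order expansions. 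Theorem \ref{THMesistenzadeiminimi} then directly yields area preservation $a((k+1)\tau)b((k+1)\tau)=1$, the perimeter monotonicity $a((k+1)\tau)+b((k+1)\tau)\le a(k\tau)+b(k\tau)$, and the step bounds $|a((k+1)\tau)-a(k\tau)|,\,|b((k+1)\tau)-b(k\tau)|\le \tau C(\Lambda)$. To close the induction one must check that the ordering $a\ge b$ persists: since $ab\equiv 1$, AM--GM gives $a+b\ge 2$ with equality only at the square, while the minimizer moves strictly ($x(k\tau)>0$ whenever $a(k\tau)>b(k\tau)$), so the square is never reached in finitely many steps and $a(k\tau)>b(k\tau)$ holds throughout, with no relabeling needed.

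Next I would extract compactness. Let $a^\tau,b^\tau$ be the interpolants of the side lengths. The uniform increment bound makes the (piecewise-affine versions of the) interpolants uniformly Lipschitz in $t$ with constant $C(\Lambda)$ independent of $\tau$, and they are uniformly bounded, taking values in $[1/\Lambda,\Lambda]$ because $ab=1$ and $a+b\le\Lambda$. By Arzel\`a--Ascoli, along any vanishing sequence $\tau_j$ a subsequence satisfies $a^{\tau_j}\to\tilde a$, $b^{\tau_j}\to\tilde b$ uniformly on compact time intervals, the limits being Lipschitz with values in $[1/\Lambda,\Lambda]$. To identify the limit equation I telescope and insert the first-order expansion of Theorem \ref{THMesistenzadeiminimi}:
\[
a^\tau(t) - a_0 = -\sum_{j} \frac{4\bigl(a(j\tau)-b(j\tau)\bigr)}{b(j\tau)\bigl(a(j\tau)+b(j\tau)\bigr)}\,\tau \;+\; \sum_j O(\tau^2),
\]
where the $O(\tau^2)$ constants are uniform in $j$ (they depend only on $\Lambda$). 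The error sum is $O(\tau)\to 0$, the main sum is a Riemann sum of a continuous integrand, and the purely algebraic identity $-\tfrac{4(a-b)}{b(a+b)}=-\tfrac{4}{b}+\tfrac{8}{a+b}$ lets me pass to the limit to obtain
\[
\tilde a(t) = a_0 + \int_0^t \Bigl(-\tfrac{4}{\tilde b(s)} + \tfrac{8}{\tilde a(s)+\tilde b(s)}\Bigr)\,ds ,
\]
and symmetrically for $\tilde b$. Since the integrand is continuous, $\tilde a,\tilde b$ are $C^1$ and solve the ODE system.

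For uniqueness I would note that on the compact region $\{1/\Lambda\le a,b\le\Lambda\}$ the right-hand side is smooth, hence Lipschitz; one checks $\tfrac{d}{dt}(ab)=0$, so $ab\equiv 1$ is preserved and the trajectory keeps $a+b\in[2,\Lambda]$, never leaving the region. Picard--Lindel\"of then gives a unique solution through $(a_0,b_0)$, which forces the whole family $a^\tau,b^\tau$ (not merely a subsequence) to converge to it, establishing uniqueness of the flat flow. Finally $|R_t^\tau\,\Delta\, R_t|\to 0$ follows from the convergence of the side lengths, since for centered rectangles the symmetric-difference measure is controlled by $|a^\tau(t)-\tilde a(t)|+|b^\tau(t)-\tilde b(t)|$.

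\textbf{Main obstacle.} I expect the delicate point to be making the error control uniform in the step index, i.e.\ that the $O(\tau^2)$ in the one-step expansion carries a constant depending only on $\Lambda$, so that the accumulated error over $\sim t/\tau$ steps is genuinely $o(1)$. This rests on the fact that all relevant quantities remain in the $\Lambda$-dependent compact set together with the second-derivative bound $|\bar x''(\tau)|\le C\Lambda^2$ of Theorem \ref{THMesistenzadeiminimi}. The companion issue — non-degeneracy, namely that $a,b$ stay bounded away from $0$ so that the integrand and its Lipschitz constant remain controlled — follows cleanly from $ab=1$ and AM--GM.
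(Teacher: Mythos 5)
Your proposal is correct and follows essentially the same route as the paper: iterate Theorem \ref{THMesistenzadeiminimi} (via the reduction of Theorem \ref{thm-equiv}) to get the recurrence and the uniform increment bounds, deduce equi-Lipschitz interpolants, pass to the limit by Arzel\`a--Ascoli, and identify the ODE system, whose Lipschitz right-hand side yields uniqueness. You merely spell out more explicitly some steps the paper leaves implicit (the Riemann-sum identification of the limit equation, the persistence of $a>b$, and the Picard--Lindel\"of uniqueness), but the argument is the same.
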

\begin{remark}\label{remexp}
As a consequence of the theorem above, $R_t $ converge to the unit square exponentially fast. These properties can be checked by setting
	$$f(t):= \frac{\tilde a(t)}{\tilde b(t)}-1.$$
	and observing that, by the theorem above, it holds that 
	$$ \frac{d}{d t}f(t)= \frac{\frac{d}{d t} \tilde a(t)}{\tilde b(t)}- \frac{\tilde a(t)\frac{d}{d t}\tilde b(t)}{\tilde b(t)^2}=\frac{-8}{\tilde a(t) \tilde b(t)+\tilde b(t)^2}\left[ \frac{\tilde a(t)}{\tilde b(t)}-1\right]< -4f(t).$$
	The exponential convergence follows from $ f(t) \leq f(0)e^{-4t} $ which is a results of the Gronwall Lemma applied to the previous inequality.
\end{remark}
\begin{proof}[Proof of Theorem \ref{thm12092023}]
	The first part of the statement regarding the properties of the approximate flat flow is a consequence of Theorem \eqref{THMesistenzadeiminimi} applied iteratively to the sets $R= R_t^{\tau}$. By Theorem \ref{THMesistenzadeiminimi} the functions $a^\tau$ and $b^\tau$ are piecewise constant and for all $ s>t$ $ \vert a^{\tau}(t)- a^{\tau}(s) \vert \leq C(\Lambda) \vert s-t+ \tau\vert $. Therefore we have 
	$$ a^{\tau}(\cdot) \rightarrow \tilde a(\cdot)\quad b^{\tau}(\cdot) \rightarrow \tilde b(\cdot) \text{ as $ \tau \rightarrow 0^+$ uniformly on the compact sets of }[0,+\infty).$$
	We observe that $ \tilde a, \tilde b$ are Lipschitz functions. 
Hence for almost every $t\geq 0$ there exist the derivative of $ \tilde a(\cdot)$ and $ \tilde b(\cdot)$ and
\begin{equation}\label{29032023pom1}
	\frac{d}{d t}  \tilde a(t)= \frac{-4}{\tilde b(t)}+\frac{8}{ \tilde a(t)+\tilde b(t)}, \quad \frac{d}{d t} \tilde b(t)= \frac{-4}{\tilde a(t)}+\frac{8}{\tilde a(t)+\tilde b(t)}. 
\end{equation}
Since the right hand sides of both formulas in \eqref{29032023pom1} are  Lipschitz continuous functions we have that \eqref{29032023pom1} holds true for all $t \geq 0$. 
\end{proof}

	
	\section{The discrete setting}\label{Sec2}
	For all $\varepsilon>0$ we consider the square lattice of size $\varepsilon$, namely $\varepsilon \mathbb{Z}^2$. We define its dual lattice as $ (\frac{\varepsilon}{2},\frac{\varepsilon}{2})+ \varepsilon\mathbb{Z}^2$. 
	For all $\mathcal{J} \subset \varepsilon \mathbb{Z}^2$ and for all $p, q \in \mathbb{Z}$ we define the sets
	\begin{equation}\label{Jrighe}
		\mathcal{J}(\cdot,\, q):= \left\{\varepsilon(j_1,\, q) \, \colon \, \varepsilon(j_1,\, q) \in \mathcal{J}\right\}
	\end{equation} 
	and 
	\begin{equation}\label{Jcolonne}
		\mathcal{J}(p,\, \cdot):= \left\{\varepsilon(p,\, j_2) \, \colon \, \varepsilon(p,\, j_2) \in \mathcal{J}\right\}.
	\end{equation} 
	The set $\mathcal{J}(\cdot, \, q)$ (respectively $\mathcal{J}(p, \cdot)$) is the subset of $\mathcal{J}$ whose elements have second entry taking the value $\varepsilon q$ (respectively the first entry taking the value $\varepsilon p$).
	In order to pass form a discrete to a continuous formulation of our problem it is convenient to identify sets $ \mathcal{J} \subset \varepsilon \mathbb{Z}^2$ with subsets of $\mathbb{R}^2$. To this end
	to all $\mathcal{J} \subset \varepsilon \mathbb{Z}^2$ we associate the set 
	\begin{equation}\label{29052023sera1}
		E_{\mathcal{J}}= \bigcup_{j \in \mathcal{J}} Q_\varepsilon(j) \subset \mathbb{R}^2
	\end{equation} 
	where $Q_\varepsilon(j):= j+ \varepsilon Q$ and $ Q=[-\frac{1}{2}, \, \frac{1}{2}]^2$. 
	For all $\varepsilon \in (0,1)$ we define the set
	\begin{equation}\label{insiemeC_varepsilong}
		\mathcal{C}_{\varepsilon }:= \left\{ E \subset \mathbb{R}^2\, \colon \, E= E_{\mathcal{J}} \text{ for some } \mathcal{J}\subset \varepsilon \mathbb{Z}^2  \right\}.
	\end{equation} 
In what follows if $E \in \mathcal{C}_{\varepsilon,}$ 
we write $E= E_{\mathcal{J}}$ to refer to the set $E_{\mathcal{J}}$ in \eqref{insiemeC_varepsilong}.
For all $ \mathcal{J} \subset \varepsilon \mathbb{Z}^2$, we define the discrete $l^\infty$-distance to $\partial \mathcal{J}$ as
\begin{equation}\label{distconsegno}
	d^\varepsilon_{\infty}(i,\, \partial \mathcal{J}):= \text{dist}(i, \, \mathcal{J})+ \text{dist}(i, \, \varepsilon \mathbb{Z}^2 \setminus \mathcal{J}) \quad \text{for all $ i \in \varepsilon\mathbb{Z}^2$},
\end{equation}
where 
$$ \text{dist}(i, \, \mathcal{J}):= \inf \left\{ \| i-j\|_{\infty} \colon \, j\in \mathcal{J} \right\} \quad\text{for all $ i \in \varepsilon\mathbb{Z}^2$.}$$
We observe that, if  $d_{\infty}$ denotes the usual $l^\infty$-distance in $ \mathbb{R}^2$, then
$$ d^{\varepsilon}_{\infty}(i,\, \partial \mathcal{J})= d_{\infty}(i,\, \partial E_{\mathcal{J}})+ \frac{\varepsilon}{2}. $$
The distance defined above can be extended to all $ \mathbb{R}^2 \setminus \partial E_{\mathcal{J}}$ by setting
$$ d^{\varepsilon}_{\infty}(x,\, \partial \mathcal{J}):=d^{\varepsilon}_{\infty}(i,\, \partial \mathcal{J}) \quad \text{if $x \in Q_{\varepsilon}(i)$.}$$
The following two polygons will play an important role in what follows.
\begin{definition}[Quasi-rectangle]\label{quasirettangolo}
	We say that a set $E=E_{\mathcal{J}} \in \mathcal{C}_\varepsilon$ is a quasi-rectangle if it satisfies the following properties. The cardinality of $\mathcal{J}$ is $\# \mathcal{J}= nm+r$ for some $n,\,m,\,r \in \mathbb{N}$ with $r < n= \max\{n,\,m\}$ and $E$ can be written as $E=R\cup Q$ with $R$ a rectangle of horizontal sidelength $\varepsilon n$ and vertical sidelength $ \varepsilon m$ and $Q$ a rectangle with horizontal sidelength $\varepsilon r$ and vertical sidelength $ \varepsilon$ and such that for every $j \in Q$ there exists $\hat{j} \in R $ such that $ \vert j- \hat{j} \vert= \varepsilon$. The set of all quasi rectangles $E \in \mathcal{C}_{\varepsilon}$ is denoted by $\mathcal{QR}_{\varepsilon}$. 
	\end{definition}
	In what follows we will be using the notation $QR$ for the quasi-rectangle $R\cup Q$.
	\begin{definition}[Pseudo-axial quasi-rectangle]\label{paquasirettangolo}
	 We say that a quasi-rectangle $QR=R\cup Q\in\mathcal{QR}_{\varepsilon}$ is pseudo axial symmetric if 
	\begin{eqnarray*}
		\mathrm{Bar}(R)=
		\begin{cases} (0,\,0) &\text{ if $n,\, m$ are odd,} \\ \left(\frac{\varepsilon}{2},\,0\right) &\text{ if $n$ is even and $m$ is odd,} \\
	\left(0,\,\frac{\varepsilon}{2}\right) & \text{ if $n$ is odd and $m$ is even,} \\ \left(\frac{\varepsilon}{2},\,\frac{\varepsilon}{2}\right) &\text{ if $n,\, m$ are even,}
	\end{cases}
	\end{eqnarray*}
	
	and 
	\begin{eqnarray*}
	Q= \begin{cases}
	\left[- \varepsilon\frac{r}{2}, \, \varepsilon\frac{r}{2}  \right] \times \left[L,\, L+\varepsilon\right] &\text{if $ r $ is odd,}\\
	\left[ -\varepsilon \frac{r}{2}+ \frac{\varepsilon}{2}, \, \varepsilon\frac{r}{2}+ \frac{\varepsilon}{2}  \right] \times \left[L,\, L+\varepsilon\right]& \text{if $ r $ is even } 
	\end{cases}
	\end{eqnarray*}
	where $L:=\mathcal{H}^{1}(\Pi_{2}(R)\cap (\{0\}\times[0,\,+\infty)))$.
\end{definition}


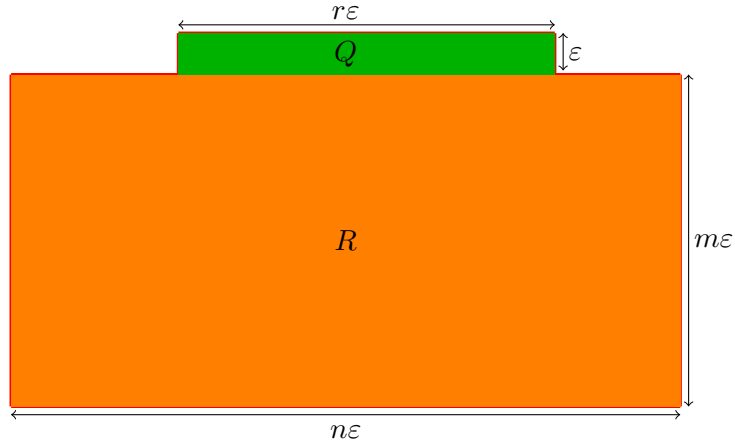
\begin{figure}[h!]
    \centering
    \begin{tikzpicture}[scale=0.55]
        \draw[red,very thick] 
        (-8,-4) -- (8,-4);
        \draw[red,very thick] 
        (-8,-4) -- (-8,4);
        \draw[red,very thick] 
        (8,-4) -- (8,4);
        \draw[red,very thick] 
        (-8,4) -- (-4,4);
        \draw[red,very thick] 
        (-4,4) -- (-4,5);
        \draw[red,very thick] 
        (-4,5) -- (5,5);
        \draw[red,very thick] 
        (5,4) -- (5,5);
        \draw[red,very thick] 
        (8,4) -- (5,4);
        \draw
        (5,4) -- (-4,4);
        \fill[orange, opacity=0.2]
            (-8,-4) rectangle (8,4);
        \fill[green!70!black, opacity=0.2]
            (5,4)  rectangle  (-4,5);
        \draw [black,very thick]    
        (0,0) node{$R$} ;
        \draw [black,very thick]    
        (0,4.51) node{$Q$} ;
        \draw[<->] 
        (-8,-4.2) -- (8,-4.2);
        \draw [black,very thick]     (0,-4.6) node{$n \varepsilon$};
        \draw[<->] 
        (8.2,-4) -- (8.2,4);
        \draw [black,very thick]     (8.8,0) node{$m \varepsilon$};
        \draw[<->] 
        (5.2,4.1) -- (5.2,5);
        \draw [black,very thick]     (5.5,4.5) node{$ \varepsilon$};
        \draw[<->] 
        (-4,5.2) -- (5,5.2);
        \draw [black,very thick]     (0,5.5) node{$  r \varepsilon$};
    \end{tikzpicture}
    \caption{The quasi-rectangle $QR= R \cup Q \in \mathcal{QR}_\varepsilon$}
    \label{fig3}
\end{figure}

\begin{definition}[Rhombus-like Shape]\label{formadelrombo}
	We say that a set $E=E_{\mathcal{J}} \in \mathcal{C}_{\varepsilon}$ has a rhombus-like shape with respect to $(0,0) \in \mathbb{R}^2$ if two functions
	$$ i \in \mathbb{Z} \longrightarrow \# \mathcal{J}^{}(i,\, \cdot) \textrm{ and } i \in \mathbb{Z} \longrightarrow \# \mathcal{J}^{}(\cdot,\, i)$$
	are increasing for $i\leq 0$ and non-increasing for $i \geq 0$.
\end{definition}
We now define the connectedness by rows or by columns.
\begin{definition}
	We say that a set $E=E_{\mathcal{J}}\in \mathcal{C}_{\varepsilon}$ is connected with respect to rows if for all $j \in \mathbb{Z}$ we have that
$$\varepsilon(i,\,j)\in \mathcal{J} \quad\forall (i,\,j) \in \mathbb{Z}^2 \quad \text{such that } \quad i \in \{i_{min}(j),\dots, \, i_{max}(j)\} $$ 
	where 
	$$ i_{min}(j):= \min \{i \in \mathbb{Z}\colon\, \varepsilon(i,\,j) \in \mathcal{J}\} \textrm{ and } i_{max}(j):=\max \{i \in \mathbb{Z}\colon\, \varepsilon(i,\,j) \in \mathcal{J}\}.$$
The connectedness with respect to columns is defined analogously.
\end{definition}
For all $ \mathcal{J} \subset \varepsilon \mathbb{Z}^2$ we define the discrete perimeter as 
\begin{equation}\label{perimetrodiscreto}
	\mathrm{P}^{\varepsilon}(\mathcal{J})= \varepsilon \,\# \left\{   (i,j) \in \mathbb{Z}^2 \times \mathbb{Z}^2 \, \colon \; \e i \in \mathcal{J},\; \e j \in \varepsilon \mathbb{Z}^2 \setminus \mathcal{J},\; \vert i- j \vert= 1 \right\}.
\end{equation}
We remark that for all $ \mathcal{J} \subset \varepsilon \mathbb{Z}^2$
\begin{equation}\label{EnergiaPer}
	\mathrm{P}^{\varepsilon}(\mathcal{J})=\mathrm{P}(E_{\mathcal{J}})=\mathcal{H}^1(\partial E_{\mathcal{J}}).
\end{equation}
In what follows, for $E\in\mathcal{C}_{\varepsilon}$ we will also drop the dependence on $\mathcal{J}$ and simply write $\mathrm{P}(E)=\mathrm{P}(E_{\mathcal{J}})$.
In the following we also make use of the next two definitions of perimeter of the horizontal and vertical projection of a set $E=E_{\mathcal{J}}$, namely
	\begin{equation}\label{P12} 
	\mathrm{P}_1(E):= \mathcal{H}^1(\Pi_{1}(E))=\max_{q \in \mathbb{Z}}\varepsilon\,\#\mathcal{J}(\cdot,\, q) \textrm{ and } \mathrm{P}_2(E):= \mathcal{H}^1(\Pi_{2}(E))=\max_{p \in \mathbb{Z} } \varepsilon\,\#\mathcal{J}(p,\, \cdot)
	\end{equation}
and observe that 
\begin{equation}\label{P12ineq}
	\mathrm{P}(E)\geq 	2\mathrm{P}_1(E)+2\mathrm{P}_2(E).
\end{equation}
For every $E,\,F \in \mathcal{C}_{\varepsilon}$ we introduce the $\e$-dissipation of $E$ and $F$ as
\begin{equation}\label{dissipazioneD_epsilon}
	D_\varepsilon(F,\, E):=\int_{F \Delta E} d^{\varepsilon}_{\infty}(x, \, \partial E)\,dx.
\end{equation}


\subsection{Discrete Steiner-like rearrangement of a set with respect to a quasi rectangle }\label{Section:Steiner-eps}
This section is devoted to the definition of a geometric rearrangement of a set with respect to a quasi-rectangle with pseudo axial symmetry. The procedure will turn the set into a rhombus-like shape with the same area and smaller perimeter. The precise construction is given in the proof of the next Proposition \ref{steinercostrizionelemma}. 
\begin{proposition}\label{steinercostrizionelemma}   Let $QR \in \mathcal{QR}_\varepsilon$ have pseudo axial symmetry and
	let us consider $E=E_{\mathcal{J}} \in \mathcal{C}_{\varepsilon}$. The discrete Steiner-like rearrangement $\mathcal{R}(E)\in \mathcal{C}_{\varepsilon}$ of the set $E$ with respect to $QR$ has a rhombus-like shape and is such that 
	\begin{equation}\label{proprietinedisteinerlike}
		\vert E \vert = \vert \mathcal{R}(E)\vert, \quad\mathrm{P}(E) \geq \mathrm{P}(\mathcal{R}(E)).
	\end{equation}
\end{proposition}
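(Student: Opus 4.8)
The plan is to mirror the continuum argument, where $E^{s}=(E^{e_1})^{e_2}$ and Proposition~\ref{steinersim} were used, by realising $\mathcal{R}(E)$ as a single monotone rearrangement of the rows of $\mathcal{J}$. Writing $r_q:=\#\mathcal{J}(\cdot,q)$ for the number of points of $\mathcal{J}$ in the row of height $\varepsilon q$, I would first record the finite multiset $\{r_q:r_q>0\}$, then re-stack the rows in non-increasing order of length away from the central height dictated by $QR$ (height $0$ or $\varepsilon/2$ according to the parity of $m$), and finally replace each reordered row by the lattice segment of the same cardinality, centred at the horizontal position prescribed by the pseudo-axial symmetry of $QR$ (at $0$ or $\varepsilon/2$ according to parity). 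Both the vertical stacking and the horizontal alignment are thus inherited from $QR$, which is what rearrangement \emph{with respect to} $QR$ should mean; since $\#\mathcal{R}(\mathcal{J})=\#\mathcal{J}$, the identity $|\mathcal{R}(E)|=|E|$ is immediate.

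The reason for centring all segments consistently is that they become nested: the centred lattice segment of length $\ell$ is contained in that of length $\ell'$ whenever $\ell\le\ell'$, for every combination of parities. Since the re-stacked sequence $q\mapsto r_q$ is by construction unimodal (non-decreasing up to the central row, then non-increasing), nesting forces each column $\mathcal{R}(\mathcal{J})(p,\cdot)$ to be occupied exactly on a contiguous block of rows about the centre. Hence every column of $\mathcal{R}(E)$ is again a single lattice segment and the column-count $p\mapsto\#\mathcal{R}(\mathcal{J})(p,\cdot)$ is itself unimodal; together with the unimodality of $q\mapsto r_q$ this is precisely the rhombus-like shape of Definition~\ref{formadelrombo}. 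Checking the nesting, and the induced column unimodality, across all parity combinations is the most delicate bookkeeping of the proof and is where the pseudo-axial normalisation of $QR$ is genuinely used.

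For the perimeter estimate I would first establish the exact edge decomposition following from~\eqref{perimetrodiscreto}: counting, along each row and each column, the ordered adjacent pairs with one point inside and one outside $\mathcal{J}$,
\[
\mathrm{P}(E)=\varepsilon\Big(\sum_{q}2\,\rho_q(E)+\sum_{p}2\,\gamma_p(E)\Big),
\]
where $\rho_q(E)$ and $\gamma_p(E)$ denote the numbers of maximal runs of $\mathcal{J}$ in row $q$ and in column $p$. Every nonempty row and column carries at least one run, so $\sum_q\rho_q(E)\ge\#\{q:r_q>0\}$ and $\sum_p\gamma_p(E)\ge\#\{p:\#\mathcal{J}(p,\cdot)>0\}$; moreover the longest row of $E$ already occupies $\max_q r_q$ distinct columns, whence $\#\{p:\#\mathcal{J}(p,\cdot)>0\}\ge\max_q r_q$. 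On the other hand $\mathcal{R}(E)$ has a single run in each nonempty row and column, the reordering preserving the number of nonempty rows and the width equalling $\max_q r_q$, so that
\[
\mathrm{P}(\mathcal{R}(E))=\varepsilon\big(2\,\#\{q:r_q>0\}+2\max_q r_q\big).
\]
Combining the two displays yields $\mathrm{P}(E)\ge\mathrm{P}(\mathcal{R}(E))$.

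The main obstacle is combinatorial rather than analytic: it is the verification in the second step that the parity-dependent centred stacking really produces nested segments, hence single-run and unimodal columns, and that the alignment conventions match those of the pseudo-axial $QR$ so that $\mathcal{R}$ is well defined. Once the shape is controlled the perimeter inequality is the short counting argument above and the area identity is automatic; I would isolate the run decomposition of~\eqref{perimetrodiscreto} as a preliminary lemma to keep the final estimate clean.
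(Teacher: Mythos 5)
Your argument is correct, and it arrives at (essentially) the same rearranged set as the paper, but it is organized differently in both of its halves. For the construction, the paper does not sort the rows: it first replaces each row $\mathcal{J}(\cdot,q)$ \emph{in place} by a centred segment of the same cardinality (producing an intermediate set $E'$ whose column counts are automatically unimodal by the nesting of centred segments), and only then centres each column of $E'$; your single-pass ``sort the rows by length, re-stack them unimodally about the centre, then centre each'' produces the same final shape, since centring the columns in the paper's second step is exactly what your vertical re-stacking achieves. For the perimeter, the paper simply invokes the projection inequality $\mathrm{P}(E)\geq 2\mathrm{P}_1(E)+2\mathrm{P}_2(E)$ of \eqref{P12ineq} together with the identity $\mathrm{P}(\mathcal{R}(E))=2\mathrm{P}_1(E)+2\mathrm{P}_2(E)$, whereas you prove the needed inequality from scratch via the run decomposition $\mathrm{P}(E)=2\varepsilon\bigl(\sum_q\rho_q+\sum_p\gamma_p\bigr)$; your version is more self-contained (it in effect supplies the proof of \eqref{P12ineq}, which the paper states without argument) and makes transparent where the inequality can be strict. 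One caveat worth recording: the specific two-pass, in-place symmetrization is not an arbitrary choice in the paper, because the subsequent Proposition \ref{menodissipazione} proves $D_\varepsilon(E,QR)\geq D_\varepsilon(\mathcal{R}(E),QR)$ by showing that \emph{each elementary step} $E'_q\mapsto E'_{q+1}$ and $E''_p\mapsto E''_{p+1}$ decreases the dissipation; with your sorted re-stacking, which permutes rows vertically before touching the columns, that step-by-step dissipation argument would have to be redone, so for the purposes of the paper as a whole the two constructions are not interchangeable even though they prove the present proposition equally well.
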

\begin{proof}
Let $QR= R \cup Q$ be a quasi-rectangle with pseudo axial symmetry. Let $n,\, m,\, r \in \mathbb{N}$ be such that $ \varepsilon n$ and $\varepsilon m$ are the horizontal and vertical sidelengths of $R$ and $ \varepsilon r$ and $\varepsilon$ are the horizontal and vertical sidelengths of $Q$.
	 We define the integer numbers $\underline p,\, \overline p, \, \overline q$ as
	\begin{equation*}
		 \underline p:= \min \left\{ p\colon\, \varepsilon(p,\,q) \in Q  \right\},\,
	 \overline p:= \max \left\{ p\colon\, \varepsilon(p,\,q) \in Q \right\},\,  \overline q:= \max \left\{ q \colon\, \varepsilon(p,\,q) \in QR \right\}. 
 \end{equation*}
	
	We divide the proof in two steps.	\\
	\textit{Step 1)}\\
	In this step we construct the set $E^{'}$ obtained via a symmetrization procedure of the rows of the set $E$.  
	Let $\mathcal{J}(p,\cdot),\, \mathcal{J}(\cdot,\, q)$ be the sets defined in \eqref{Jcolonne} and \eqref{Jrighe}.
	For all $q \in \mathbb{Z} $ we define the symmetrized
	set of $ \mathcal{J}(\cdot,\,q)$ with respect to the vertical axis as 
	$$\mathcal{J}^{'}(q):= \left\{ \varepsilon (j,\,q)\colon \, j \in \left\{ -\frac{\# \mathcal{J}(\cdot,\, q)-1 }{2}, \dots , \frac{\# \mathcal{J}(\cdot,\, q)-1 }{2} \right\}\right\} $$ 
	if $ \#  \mathcal{J}(\cdot,\, q)$ is odd, 
	and we define 
	$$\mathcal{J}^{'}(q):= \left\{ \varepsilon (j,\,q)\colon \, j \in \left\{ -\frac{\# \mathcal{J}(\cdot,\, q) }{2}+1, \dots , \frac{\# \mathcal{J}(\cdot,\, q) }{2} \right\}\right\} $$ 
	if $ \#  \mathcal{J}(\cdot,\, q)$ is even.

	We define the subsets $\mathcal{J}^{'} \subset \varepsilon \mathbb{Z}^2$ and $ E^{'} \subset \mathbb{R}^2$ as
	\begin{equation}\label{insiemeJ'def}
		\mathcal{J}^{'}:=  \bigcup_{q \in \mathbb{Z} } \mathcal{J}^{'}(q)\quad \text{and} \quad E^{'}:= E_{\mathcal{J}^{'}}.
	\end{equation}
	We notice that the set $E^{'}$ can be defined by recurrence as
	\begin{equation}\label{defE^{'}_{q+1}}
		\left\{
		\begin{aligned}
			& E_{q+1}^{'}:=\left(E^{'}_q \setminus E_{\mathcal{J}(\cdot,\,q)}\right) \cup E_{\mathcal{J}^{'}(q)} & \text{if $ q \geq q_{min} $,} \\
			& E_{q_{min}}^{'}=E & \text{if $q=q_{min}$}
		\end{aligned}
		\right.
	\end{equation}
	where 
	\begin{equation}\label{ordinataminimalediE}
		q_{min}:= \inf \left\{  q \in \mathbb{Z} \, \colon\, \, \mathcal{J}(\cdot,\,q) \neq \emptyset \right\}.
	\end{equation}
	We now observe that the function
	$$ i  \in \mathbb{Z} \longrightarrow \# \mathcal{J}^{'}(i,\, \cdot)$$ 
	is increasing for $ i\leq 0$ and non-increasing for $ i \geq 0$. Indeed,
	by construction, the set $ \mathcal{J}^{'}$ is connected with respect to rows, hence for all $j,\, q \in \N$ 
	$$\text{if} \quad \varepsilon(j+1,\, q) \in \mathcal{J}^{'} \quad \text{then} \quad \varepsilon(j,\, q) \in \mathcal{J}^{'} $$ which proves the monotonicity of the function for positive $i$. An analogous argument shows the monotonicity for $i$ negative. 

\begin{figure}[h!]
    \centering
    \begin{tikzpicture}[scale=0.5]
        \draw[red,ultra thick] 
        (-16,-3) -- (-4,-3);
        \draw[red,ultra thick] 
        (-16,3) -- (-14,3);
        \draw[red,ultra thick] 
        (-14,3) -- (-14,4);
        \draw[red,ultra thick] 
        (-14,4) -- (-6,4);
        \draw[red,ultra thick] 
        (-6,4) -- (-6,3);
        \draw[red,ultra thick] 
        (-6,3) -- (-4,3);
        \draw[red,ultra thick] 
        (-4,3) -- (-4,-3);
        \draw[red,ultra thick] 
        (-16,3) -- (-16,-3);
        \draw [red,ultra thick]     (-6,-3.6) node{$\partial QR $};
        \draw[blue,very thick] 
        (-17,1) -- (-17,5)--(-12,5) --(-12,1)-- cycle;
        \draw [blue,ultra thick]     (-12,5.5) node{$E $};
        \fill[blue, opacity=0.1]
        (-17,1) -- (-17,5)--(-12,5) --(-12,1)-- cycle;
        \draw[blue,very thick] 
        (-18,5)--(-20,5)--(-20,-3)--(-19,-3)--(-19,-4)--(-18,-4)-- cycle;
        \draw [blue,ultra thick]     (-18,5.5) node{$E $};
        \fill[blue, opacity=0.1]
        (-18,5)--(-20,5)--(-20,-3)--(-19,-3)--(-19,-4)--(-18,-4)-- cycle;
        \draw[blue,very thick] 
        (-11,-2)--(-11,0)--(-9,0)--(-9,-2)-- cycle;
        \draw [blue,ultra thick]     (-9,0.5) node{$E $};
        \fill[blue, opacity=0.1]
        (-11,-2)--(-11,0)--(-9,0)--(-9,-2)-- cycle;
        \draw[blue,very thick] 
        (-7,-2)--(-7,-1)--(-6,-1)--(-6,0)--(1,0)--(1,-2)-- cycle;
        \draw [blue,ultra thick]     (1,0.5) node{$E $};
        \fill[blue, opacity=0.1]
        (-7,-2)--(-7,-1)--(-6,-1)--(-6,0)--(1,0)--(1,-2)-- cycle;
        \draw[blue,very thick] 
        (-7,1)--(-7,5)--(-2,5)--(-2,1)-- cycle;
        \draw [blue,ultra thick]     (-2,5.5) node{$E $};
        \fill[blue, opacity=0.1]
        (-7,1)--(-7,5)--(-2,5)--(-2,1)-- cycle;
    \end{tikzpicture}
    \caption{The quasi-rectangle $QR $ and the set $E$}
    \label{fig4}
\end{figure}
	\textit{Step 2)}\\
		In this step we construct the set $\mathcal{R}(E)$ obtained from the set $E^{'}$ via a symmetrization of its columns. Also this operation  depends on the parity of $m$. In what follows we keep denoting by $p \in \mathbb{Z}$ the column index.
We first consider the case $m \in 2 \mathbb{N}$ . If $ p \in \{\underline p,\cdots,\overline p\}$ we define the symmetrized column according to the following two cases. If $ \# \mathcal{J}^{'}(p,\cdot) \in 2 \mathbb{N}+1$, then we define 
    	$$\mathcal{J}^{''}(p):= \left\{ \varepsilon (p,\,j)\colon \, j \in \left\{ 1-\frac{\# \mathcal{J}^{'}(p,\,\cdot)-1 }{2}, \dots ,1 + \frac{\# \mathcal{J}^{'}(p,\,\cdot)-1 }{2} \right\}\right\}. $$  
	If instead $\# \mathcal{J}^{'}(p,\cdot) \in 2 \mathbb{N}$, then we define 
    	$$\mathcal{J}^{''}(p):= \left\{ \varepsilon (p,\,j)\colon \, j \in \left\{ -\frac{\# \mathcal{J}^{'}(p,\,\cdot) }{2}+1, \dots , \frac{\# \mathcal{J}^{'}(p,\, \cdot) }{2} \right\}\right\}. $$

In the case $ p \in\mathbb{N} \setminus \{ \underline p,\cdots,\overline p\}$ we define the symmetrized column according to the following two cases.
If $ \# \mathcal{J}^{'}(p,\cdot) \in 2 \mathbb{N}+1$, then we define 
		$$\mathcal{J}^{''}(p):= \left\{ \varepsilon (p,\,j)\colon \, j \in \left\{ 1-\frac{\# \mathcal{J}^{'}(p,\,\cdot)-1 }{2}, \dots , 1+\frac{\# \mathcal{J}^{'}(p,\,\cdot)-1 }{2} \right\}\right\}. $$ 
If instead $\# \mathcal{J}^{'}(p,\cdot) \in 2 \mathbb{N}$, then we define 
		$$\mathcal{J}^{''}(p):= \left\{ \varepsilon (p,\,j)\colon \, j \in \left\{ -\frac{\# \mathcal{J}^{'}(p,\,\cdot) }{2}+1, \dots , \frac{\# \mathcal{J}^{'}(p,\, \cdot) }{2} \right\}\right\}. $$

Now we consider the case $m \in 2 \mathbb{N}+1$ . If $ p \in \{\underline p,\cdots,\overline p\}$ we have two possible cases.
If $ \# \mathcal{J}^{'}(p,\cdot) \in 2 \mathbb{N}+1$, then we define 
		$$\mathcal{J}^{''}(p):= \left\{ \varepsilon (p,\,j)\colon \, j \in \left\{ -\frac{\# \mathcal{J}^{'}(p,\,\cdot)-1 }{2}, \dots ,\frac{\# \mathcal{J}^{'}(p,\,\cdot)-1 }{2} \right\}\right\}.$$ 
If instead $\# \mathcal{J}^{'}(p,\cdot) \in 2 \mathbb{N}$, then we define 
		$$\mathcal{J}^{''}(p):= \left\{ \varepsilon (p,\,j)\colon \, j \in \left\{ -\frac{\# \mathcal{J}^{'}(p,\,\cdot) }{2}+1, \dots , \frac{\# \mathcal{J}^{'}(p,\, \cdot) }{2} \right\}\right\}. $$
%
\begin{figure}[h!]
    \centering
    \begin{tikzpicture}[scale=0.5]
        \draw[red,ultra thick] 
        (-16,-3) -- (-4,-3);
        \draw[red,ultra thick] 
        (-16,3) -- (-14,3);
        \draw[red,ultra thick] 
        (-14,3) -- (-14,4);
        \draw[red,ultra thick] 
        (-14,4) -- (-6,4);
        \draw[red,ultra thick] 
        (-6,4) -- (-6,3);
        \draw[red,ultra thick] 
        (-6,3) -- (-4,3);
        \draw[red,ultra thick] 
        (-4,3) -- (-4,-3);
        \draw[red,ultra thick] 
        (-16,3) -- (-16,-3);
        \draw [red,ultra thick]     (-6,-3.6) node{$\partial QR $};
        \draw[blue,very thick] 
        (-9,-4)--(-9,-2)--(-4,-2)--(-4,0)--(-9,0)--(-9,1)--(-3,1)--(-3,5)--(-16,5)--(-16,1)--(-11,1)--(-11,0)--(-15,0)--(-15,-1)--(-16,-1)--(-16,-2)--(-11,-2)--(-11,-3)--(-10,-3)--(-10,-4)--cycle;
        \fill[blue, opacity=0.1]
        (-9,-4)--(-9,-2)--(-4,-2)--(-4,0)--(-9,0)--(-9,1)--(-3,1)--(-3,5)--(-16,5)--(-16,1)--(-11,1)--(-11,0)--(-15,0)--(-15,-1)--(-16,-1)--(-16,-2)--(-11,-2)--(-11,-3)--(-10,-3)--(-10,-4)--cycle;
        \draw [blue,ultra thick]     (-2.5,0.8) node{$E' $};
    \end{tikzpicture}
    \caption{The quasi-rectangle $QR $ and the set $E'$}
    \label{fig5}
\end{figure}
In the alternative case that $ p \in\mathbb{N} \setminus \{ \underline p,\cdots,\overline p\}$ we define the new columns as follows.
If $ \# \mathcal{J}^{'}(p,\cdot) \in 2 \mathbb{N}+1$ then 
		$$\mathcal{J}^{''}(p):= \left\{ \varepsilon (p,\,j)\colon \, j \in \left\{ -\frac{\# \mathcal{J}^{'}(p,\,\cdot)-1 }{2}, \dots , \frac{\# \mathcal{J}^{'}(p,\,\cdot)-1 }{2} \right\}\right\}. $$ 
If instead $\# \mathcal{J}^{'}(p,\cdot) \in 2 \mathbb{N}$, then
		$$\mathcal{J}^{''}(p):= \left\{ \varepsilon (p,\,j)\colon \, j \in \left\{ -\frac{\# \mathcal{J}^{'}(p,\,\cdot) }{2}+1, \dots , \frac{\# \mathcal{J}^{'}(p,\, \cdot) }{2} \right\}\right\}. 
		$$

	We define the subsets $\mathcal{J}^{''} \subset \varepsilon \mathbb{Z}^2$ and 
	$\mathcal{R}(E) \subset \mathbb{R}^2$ 
	as
	$$  \mathcal{J}^{''}:= \bigcup_{p \in \mathbb{Z}} \mathcal{J}^{''}(p)\quad \textrm{and}\quad \mathcal{R}(E):=E_{\mathcal{J}^{''}} .$$	
	The set $\mathcal{R}(E)$ can be obtained by the following recurrence procedure starting from the initial datum $E^{'}$
	in full analogy with formula \eqref{defE^{'}_{q+1}}:
	\begin{equation}\label{defE^{''}_{p+1}}
		\left\{
		\begin{aligned}
			& E_{p+1}^{''}:=\left(E^{''}_p \setminus E^{'}_{\mathcal{J}(p,\cdot)}\right) \cup E_{\mathcal{J}^{''}(p)} & \text{if $ p \geq p_{min} $,} \\
			& E_{p_{min}}^{''}=E^{'} & \text{if $p=p_{min}$}
		\end{aligned}
		\right.
	\end{equation}
	where 
	\begin{equation}\label{ascissaminimalediE^'}
		p_{min}:= \inf \left\{  p\in \mathbb{Z} \, \colon\, \, E^{'}_{\mathcal{J}(p,\cdot)} \neq \emptyset \right\}.
	\end{equation}
	From the construction of the set $\mathcal{R}(E)$ we have that $ \vert E \vert = \vert E^{'}\vert=\vert \mathcal{R}(E) \vert$.
	We observe that the set $\mathcal{J}^{''}$ is such that the two functions 
	$$ i \in \mathbb{Z} \longrightarrow \# \mathcal{J}^{''}(i,\, \cdot) \quad i \in \mathbb{Z} \longrightarrow \# \mathcal{J}^{''}(\cdot,\, i)$$
	are increasing for $i\leq 0$ and non-increasing for $i \geq 0$. 
	Note that such a property follows by the same argument detailed above for $\# \mathcal{J}^{'}(i,\cdot)$.
	According to the definition \ref{formadelrombo} we have that the set $\mathcal{R}(E)$ has a rhombus-like shape and that
	$$ \mathrm{P}(\mathcal{R}(E))=2\max_{q \in \mathbb{Z}} \varepsilon \#\mathcal{J}(\cdot,\, q)+2\max_{p \in \mathbb{Z} } \varepsilon\#\mathcal{J}(p,\, \cdot)= 2 \mathrm{P}_1(E)+ 2 \mathrm{P}_2(E).$$

	Therefore we observe that 
	$$\mathrm{P}(E) \geq 2 \mathrm{P}_1(E)+ 2 \mathrm{P}_2(E)= \mathrm{P}(\mathcal{R}(E)).$$
%

\begin{figure}[h!]
    \centering
    \begin{tikzpicture}[scale=0.5]
        \draw[red,ultra thick] 
        (-16,-3) -- (-4,-3);
        \draw[red,ultra thick] 
        (-16,3) -- (-14,3);
        \draw[red,ultra thick] 
        (-14,3) -- (-14,4);
        \draw[red,ultra thick] 
        (-14,4) -- (-6,4);
        \draw[red,ultra thick] 
        (-6,4) -- (-6,3);
        \draw[red,ultra thick] 
        (-6,3) -- (-4,3);
        \draw[red,ultra thick] 
        (-4,3) -- (-4,-3);
        \draw[red,ultra thick] 
        (-16,3) -- (-16,-3);
        \draw [red,ultra thick]     (-5.9,4.5) node{$\partial QR $};
        \draw[blue,very thick] 
        (-4,-3)--(-4,-2)--(-3,-2)--(-3,2)--(-4,2)--(-4,3)--(-9,3)--(-9,5)--(-10,5)--(-10,4)--(-11,4)--(-11,3)--(-16,3)--(-16,-2)--(-15,-2)--(-15,-3)--(-11,-3)--(-11,-4)--(-9,-4)--(-9,-3)--cycle;
        \fill[blue, opacity=0.1]
        (-4,-3)--(-4,-2)--(-3,-2)--(-3,2)--(-4,2)--(-4,3)--(-9,3)--(-9,5)--(-10,5)--(-10,4)--(-11,4)--(-11,3)--(-16,3)--(-16,-2)--(-15,-2)--(-15,-3)--(-11,-3)--(-11,-4)--(-9,-4)--(-9,-3)--cycle;
        \draw [blue,ultra thick]     (-2.4,1.1) node{$E'' $};
    \end{tikzpicture}
    \caption{The quasi-rectangle $QR $ and the set $E''$}
    \label{fig6}
\end{figure}

\end{proof}

\begin{proposition}\label{menodissipazione}   Let $QR \in \mathcal{QR}_\varepsilon$ have pseudo axial symmetry and
	let us consider $E \in \mathcal{C}_{\varepsilon}$. Let $\mathcal{R}(E) \in \mathcal{C}_{\varepsilon}$ be the discrete Steiner-like rearrangement of the set $E$ with respect to $QR$. Then
	\begin{equation}\label{menodissipazionedis}
		D_\varepsilon(E,\,QR) \geq D_{\varepsilon}(\mathcal{R}(E),\, QR).
	\end{equation}
\end{proposition}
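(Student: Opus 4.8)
The plan is to factor the rearrangement as $\mathcal{R}=\mathcal{R}_{\mathrm{col}}\circ\mathcal{R}_{\mathrm{row}}$, where $\mathcal{R}_{\mathrm{row}}$ produces the row-symmetrized set $E'=E_{\mathcal{J}'}$ of Step 1 and $\mathcal{R}_{\mathrm{col}}$ produces $\mathcal{R}(E)=E_{\mathcal{J}''}$ of Step 2, and to prove separately the two inequalities $D_\varepsilon(E,QR)\ge D_\varepsilon(E',QR)$ and $D_\varepsilon(E',QR)\ge D_\varepsilon(\mathcal{R}(E),QR)$; chaining them yields \eqref{menodissipazionedis}. For each inequality I would slice the dissipation with Fubini — over horizontal strips for the first and over vertical strips for the second — exactly as in the proof of Proposition \ref{decresdisp}, using that $d^\varepsilon_\infty(\cdot,\partial QR)$ is constant on the cells $Q_\varepsilon(i)$, so every integral reduces to a sum over lattice lines. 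Both inequalities are thereby reduced to a one–dimensional rearrangement statement of the following type: given the fixed slice $B$ of $QR$ along a lattice line and the weight $w:=d^\varepsilon_\infty(\cdot,\partial QR)$ on that line, replacing the slice $A$ of the current set by the contiguous block prescribed by the construction does not increase $\int_{A\triangle B}w$.

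For Step 1 this statement is essentially the one already established in Proposition \ref{decresdisp}. Indeed, by pseudo axial symmetry every horizontal slice $B$ of $QR$ is an interval symmetric about the symmetry axis of $QR$, and along such a slice $w$ is even and unimodal: it is the minimum of the horizontal distance to the vertical sides, which decreases away from the axis, and of the vertical distance to the horizontal parts of $\partial QR$, which is constant along the slice, while the centred bump only raises $w$ on the central cells and thus preserves unimodality. Hence the super-level sets $\{w>s\}$ on each horizontal line are centred intervals, the block $\mathcal{J}'(q)$ is exactly the symmetric rearrangement of $A$, and the bathtub principle — carried out through the same distance comparison as in \eqref{27032023mattina5} and \eqref{27032023mattina10} — gives $\int_{A\triangle B}w\ge\int_{A'\triangle B}w$ row by row. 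Summing over $q$ produces $D_\varepsilon(E,QR)\ge D_\varepsilon(E',QR)$.

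The genuinely new difficulty, and where I expect the main obstacle, is Step 2. Vertically the quasi-rectangle is not symmetric: the slab $Q$ sits only on top of $R$, so a vertical slice $B$ of $QR$ under the bump is an interval whose midpoint is shifted upward by $\varepsilon/2$ relative to the slices that miss the bump, and, more importantly, the weight $w$ along such a column is not even about the midpoint of $B$. Concretely, near the top edge of $R$ the presence of $Q$ keeps $w$ at the shelf value $\varepsilon r/2$, whereas at the symmetric height near the bottom of $R$ the weight decays to $0$; consequently the super-level sets of $w$ are intervals displaced toward the bump. The point of the construction is exactly that the upward shift it imposes on the columns with $p\in\{\underline p,\dots,\overline p\}$, together with the parity-dependent centring dictated by $m$, matches this displacement. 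The plan is therefore to verify, column by column and distinguishing the parities of $m$ and of $\#\mathcal{J}'(p,\cdot)$ together with whether $p\in\{\underline p,\dots,\overline p\}$, that the contiguous block $\mathcal{J}''(p)$ coincides with the super-level sets of $w$ on that line; once this alignment is granted the same bathtub/distance-comparison argument applies and gives $\int_{A\triangle B}w\ge\int_{A''\triangle B}w$. The delicate bookkeeping of these cases, and the check that the shelf produced by the bump never destroys the nesting of the super-level sets, is the laborious part; an alternative and perhaps cleaner route is to realize the passage $A\rightsquigarrow A''$ as a finite sequence of elementary moves that each transport one occupied cell from a site of smaller weight to a vacant site of larger or equal weight inside $B$, verifying that every such move is non-increasing for $\int_{\,\cdot\,\triangle B}w$. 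Summing the per-column inequalities over $p$ yields $D_\varepsilon(E',QR)\ge D_\varepsilon(\mathcal{R}(E),QR)$ and closes the proof.
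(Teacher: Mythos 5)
Your proposal is correct and follows essentially the same route as the paper: factor the rearrangement into the row pass and the column pass, slice the dissipation along lattice lines where $d^{\varepsilon}_{\infty}(\cdot,\partial QR)$ is constant on cells, and reduce each pass to a one-dimensional transport inequality per line. The mechanism the paper actually uses is the second option you sketch — an explicit greedy bijection between the cells added and the cells removed on each line, with a cell-by-cell distance comparison showing each move is non-increasing — rather than an exact identification of the blocks with super-level sets, and it likewise leaves the column pass (including the asymmetry caused by the bump $Q$) to "the same arguments" as the row pass.
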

\begin{proof}
		By the definition of quasi rectangle \ref{quasirettangolo}, we write $QR=R \cup Q$, with $R$ a rectangle having sidelengths $\varepsilon n, \varepsilon m$ and $Q$ a rectangle with sidelengths $ \varepsilon r, \varepsilon$ where $ n, \, m, \, r \in \mathbb{N}$.
	By the very definition of quasi-rectangle with pseudo axial symmetry we have that
	$$  	 \mathrm{Bar}(R) \in \left\{ \left(0,\,0\right),\,\, \left(\frac{\varepsilon}{2},\,0\right),\,\left(0,\,\frac{\varepsilon}{2}\right),\,\, \left(\frac{\varepsilon}{2},\,\frac{\varepsilon}{2}\right) \right\}$$
	and that
	
$$
Q= \begin{cases}\left[- \frac{\varepsilon}{2}(r-1), \, \frac{\varepsilon}{2}(r-1)  \right] \times \left[L,\, L+\varepsilon\right] & \text{if $ r \in 2 \mathbb{N}+1$,}\\
\left[- \frac{\varepsilon}{2}(r+1), \, \frac{\varepsilon}{2}(r+1)  \right] \times \left[L,\, L+\varepsilon\right] &\text{if $ r \in 2 \mathbb{N}$}. 
\end{cases}
$$
	We define the followings lengths
	\begin{align*}
		&	L_1(+):=\mathcal{H}^{1}(\Pi_{1}(R)\cap ([0,\,+\infty)\times \{0\})), & L_1(-):=\mathcal{H}^{1}(\Pi_{1}(R)\cap ((-\infty,\,0]\times \{0\})), \\
		&	L_2(+):=\mathcal{H}^{1}(\Pi_{2}(R)\cap (\{0\}\times[0,\,+\infty))), & L_2(-):=\mathcal{H}^{1}(\Pi_{2}(R)\cap (\{0\}\times(-\infty,\,0])),\\
	        &l(+):= \mathcal{H}^{1}(\Pi_{1}(Q)\cap ([0,+\infty) \times \{0\})), & l(-):= \mathcal{H}^{1}(\Pi_{1}(Q)\cap ((-\infty,0] \times \{0\})),\\
		&	L_1:=L_1(+)+L_1(-)=n \varepsilon, \\
		& L_2:=L_2(+)+L_2(-)=m\varepsilon.
	\end{align*}
	By Proposition \ref{steinercostrizionelemma} there exists $\mathcal{R}(E)$ such that 
	\begin{equation*}
		\vert E \vert = \vert \mathcal{R}(E)\vert, \quad\mathrm{P}(E) \geq \mathrm{P}(\mathcal{R}(E)).
	\end{equation*}
	We recall that the set $\mathcal{R}(E)$ has been obtained by a two steps symmetrization procedure: in the first step we have symmetrized the rows while in the second one we have symmetrized the columns. The proof of the claim is accordingly subdivided in two steps in which we prove that the symmetrization of the rows (respectively the columns) decreases the dissipation. In what follows all the notation are the same as those used in Proposition \ref{steinercostrizionelemma}.\\
	\textit{Step 1)}\\
	We start by showing that the dissipation decreases under symmetrization by rows, i.e., 
	$$ D_{\varepsilon}(E^{'}_{q},\, QR) \geq D_{\varepsilon}(E^{'}_{q+1},\, QR) \quad \forall q \geq q_{min}$$
	where $E^{'}, \, E_{q}^{'}$ and $q_{min}$ are defined in \eqref{insiemeJ'def},  \eqref{defE^{'}_{q+1}} and \eqref{ordinataminimalediE}, respectively. We start considering the case 
	$q\neq \frac{1}{\varepsilon}\left(L_2(+)+\frac{\varepsilon}{2}\right)$. 
	We need to distinguish two cases, namely $ \varepsilon \# \mathcal{J}(\cdot,\,q) < L_1$ and $ \varepsilon \# \mathcal{J}(\cdot,\,q) \geq L_1$. In the first case if
	$$ \mathcal{J}(\cdot,\,q)  \cap QR^c  \neq \emptyset$$
	one can "move" the points of this set
	to the points of the set 
	$$ \mathcal{J}(\cdot,\, q)^{c} \cap (\left[-L_1(-),\, L_1(+) \right] \times \{q\})$$ 
	which is a subset of $QR$. This procedure decreases the dissipation since the latter is obtained integrating a distance on a subset of $E \Delta QR$.
	If instead 
	$$  \mathcal{J}(\cdot,\,q)  \cap QR^c = \emptyset,$$ we procede differently. We introduce the sets
	$$\mathcal{A}:= \left\{ \varepsilon(j,\,q) \colon \, \varepsilon(j,\,q) \in \mathcal{J}^{'}(q) \setminus \mathcal{J}(\cdot,\,q)\right\} \,\, \text{and } \, \,\mathcal{B}:= \left\{ \varepsilon( j,\, q) \colon \, \varepsilon(j,\,q) \in \mathcal{J}^{'}(q)^{c} \cap \mathcal{J}(\cdot,\,q) \right\}$$
	and define the function 
	$f: \mathcal{A} \rightarrow \mathcal{B}$ 
	as follows. If $i=i_{min}:= \min \{j \geq 0\, \colon\, \varepsilon(j,\,q) \in \mathcal{A}\}$
	$$f(\varepsilon( i,\,q)) \in \arg\min \left\{  \vert \varepsilon j \vert \, \colon \, \varepsilon(j,\,q) \in \mathcal{B}   \right\} $$
	with the choice that $f(\varepsilon(i,\,q))= \varepsilon(\hat{j},\,q)$
	in the case that the $\arg\min$ is not unique, namely
	$$ \arg\min \left\{  \vert \varepsilon j \vert \, \colon \, \varepsilon(j,\,q) \in \mathcal{B}    \right\} =\left\{ \varepsilon(-\hat{j},\, q),\,\varepsilon(\hat{j},\, q) \right\}.$$
	If $i> i_{min}$ we define
	$$f(\varepsilon( i,\,q)) \in \arg\min \left\{  \vert \varepsilon j \vert \, \colon \, \varepsilon(j,\,q) \in \mathcal{B} \setminus \left\{f(\varepsilon(i_{min},\,q)),\, \dots,\,f(\varepsilon(i-1,\,q))\right\}   \right\} $$
	with the choice $f(\varepsilon(i,\,q))= \varepsilon(\hat{j},\,q)$ in the case that the $\arg\min$ is not unique, namely
	$$  \arg\min \left\{  \vert \varepsilon j \vert \, \colon \, \varepsilon(j,\,q) \in \mathcal{B} \setminus \left\{f(\varepsilon(i_{min},\,q)),\, \dots,\,f(\varepsilon(i-1,\,q))\right\}   \right\}=\left\{ \varepsilon(-\hat{j},\, q),\,\varepsilon(\hat{j},\, q) \right\}. $$
	If $i <  i_{min}$ we define
	$$f(\varepsilon( i,\,q)) \in \arg\min \left\{  \vert \varepsilon j \vert \, \colon \, \varepsilon(j,\,q) \in \mathcal{B} \setminus \left\{ f(\varepsilon(\tilde{i},\,q))\colon\,  \tilde{i}\geq i+1 \right\}  \right\} $$
	with the choice $f(\varepsilon(i,\,q))= \varepsilon(\hat{j},\,q)$ in the case that the $\arg\min$ is not unique, namely
	$$  \arg\min \left\{  \vert \varepsilon j \vert \, \colon \, \varepsilon(j,\,q) \in \mathcal{B} \setminus \left\{ f(\varepsilon(\tilde{i},\,q))\colon\,  \tilde{i}\geq i+1 \right\}  \right\}=\left\{ \varepsilon(-\hat{j},\, q),\,\varepsilon(\hat{j},\, q) \right\}. $$
	
For all $ \varepsilon(i,\,q) \in \mathcal{A}$  and for all $ (x,\,y) \in Q_{\varepsilon}(\varepsilon(i,\,q))$ we uniquely define $ x':=x+ \Pi_{1}(f(\varepsilon(i,\,q))-\varepsilon(i,\,q))$, and observe that $ (x',\, y) \in Q_{\varepsilon}(f(\varepsilon(i,\,q)))$.
Without loss of generality we consider only the case 
$ \varepsilon(i,\,q) \in \mathcal{A}$ such that $q\geq 0,\, i\geq0$ and $\Pi_{1}(f(\varepsilon(i,\,q)))>0$, the argument in the other cases being fully analogous. For all $ (x,\,y) \in Q_{\varepsilon}(\varepsilon(i,\,q))$ with $x\geq0,\, y\geq0$
it holds that
\begin{equation}\label{distminorazione}
	\begin{split}
		&d_{\infty}^{\varepsilon}((x,\,y),\,\partial QR)= \min \left\{ d_{\infty}^{\varepsilon}((x,\,y),\, \partial QR(\pm)) , \, L_1(+)- \varepsilon i \right\} \geq \\
		&\min \left\{ d_{\infty}^{\varepsilon}((x,\,y),\, \partial QR(\pm)) , \, L_1(+)-\varepsilon i-\Pi_{1}(f(\varepsilon(i,\,q))-\varepsilon(i,\,q))\right\} = d_{\infty}^{\varepsilon}((x',\,y),\, \partial QR)
	\end{split}
\end{equation}
where $$\partial QR(\pm):= \partial QR \cap (\left( \mathbb{R}\times [L_2(+),+\infty)  \right) \cup \left( \mathbb{R}\times \{L_2(-)\} \right)).$$ Then, by the formula \eqref{distminorazione} we obtain
\begin{equation}\label{formulaDdecresciente}
	\int_{Q_{\varepsilon}(i,\,q)} d_{\infty}^{\varepsilon}((x,\,y),\,\partial QR) \, dx\, dy \geq \int_{Q_{\varepsilon}(f(\varepsilon(i,\,q)))} d_{\infty}^{\varepsilon}((x',\,y),\, \partial QR) \, dx'\, dy.
\end{equation}
Moreover, for all $q \in \mathbb{Z}$, we set $S(q):=[L_1(-),L_1(+)]\times\varepsilon\left[q-\frac{1}{2},\, q+\frac{1}{2}\right]$ and, referring to \eqref{defE^{'}_{q+1}}, we observe that
\begin{equation}\label{difsimmetricaEq+1eR}
	\begin{split}
		E^{'}_{q} \Delta QR \cap S(q)& = \bigcup_{\varepsilon(i,\,q)\in S(q)\cap \mathcal{A}^{c} \setminus \mathcal{J}(\cdot,\,q)} Q_{\varepsilon}(\varepsilon(i,\,q)) \cup \bigcup_{\varepsilon(i,\,q)\in \mathcal{A}} Q_{\varepsilon}(\varepsilon(i,\,q)),\\
		E^{'}_{q+1} \Delta QR \cap S(q)& = \bigcup_{\varepsilon(i,\,q)\in S(q)\cap \mathcal{A}^{c} \setminus \mathcal{J}(\cdot,\,q)} Q_{\varepsilon}(\varepsilon(i,\,q)) \cup \bigcup_{\varepsilon(i,\,q)\in \mathcal{A}} Q_{\varepsilon}(f(\varepsilon(i,\,q))).
	\end{split}
\end{equation}
As a result, thanks to \eqref{formulaDdecresciente} and \eqref{difsimmetricaEq+1eR}, we have that
\begin{equation}\label{formuladeiraginamenti}
	\begin{split}
		D_{\varepsilon}(E^{'}_{q},\,QR)& =\int_{E_{q}^{'} \Delta QR} d_{\infty}^{\varepsilon} (z,\, \partial QR)\, dz \\
		& =  \int_{E_{q}^{'} \Delta QR \cap S(q)^{c}} d_{\infty}^{\varepsilon} (z,\,  \partial QR)\, dz   +\int_{E_{q}^{'} \Delta QR \cap S(q)} d_{\infty}^{\varepsilon} (z,\,  \partial QR)\, dz  \\
		& =\int_{E_{q}^{'} \Delta QR \cap S(q)^c} d_{\infty}^{\varepsilon} (z,\, \partial QR)\, dz
		+ \int_{\bigcup_{\varepsilon(i,\,q)\in S(q)\cap \mathcal{A}^{c} \setminus \mathcal{J}(\cdot,\,q)} Q_{\varepsilon}(\varepsilon(i,\,q))} d_{\infty}^{\varepsilon} (z,\,  \partial QR)\, dz \\
		&\,\,\,\,+ \int_{\bigcup_{\varepsilon(i,\,q)\in \mathcal{A}} Q_{\varepsilon}(\varepsilon(i,\,q))} d_{\infty}^{\varepsilon} (z,\, \partial QR)\, dz \\
		& \geq \int_{E_{q}^{'} \Delta QR \cap S(q)^c} d_{\infty}^{\varepsilon} (z,\, \partial QR)\, dz 
		+ \int_{\bigcup_{\varepsilon(i,\,q)\in S(q)\cap \mathcal{A}^{c} \setminus \mathcal{J}(\cdot,\,q)} Q_{\varepsilon}(\varepsilon(i,\,q))} d_{\infty}^{\varepsilon} (z,\, \partial QR)\, dz \\
		& \,\,\,\,+ \int_{\bigcup_{\varepsilon(i,\,q)\in \mathcal{A}} Q_{\varepsilon}(f(\varepsilon(i,\,q)))} d_{\infty}^{\varepsilon} (z,\, \partial QR)\, dz \\
		&=\int_{E_{q+1}^{'} \Delta QR} d_{\infty}^{\varepsilon} (z,\, \partial QR)\, dz = D_{\varepsilon}(E^{'}_{q+1},\,QR).
	\end{split}
\end{equation} 
Hence we have proved that
$$ \text{if } \quad\varepsilon \# \mathcal{J}(\cdot,\, q)<L_1 \quad \text{ then } \quad D_{\varepsilon}(E^{'}_{q},\, QR) \geq D_{\varepsilon}(E^{'}_{q+1},\, QR).$$
We now show the proof in the second case, namely for $\varepsilon \# \mathcal{J}(\cdot,\, q)\geq L_1$. We start by observing that the dissipation $D_{\varepsilon}(\cdot,QR)$ decreases if we ''move'' points of the set $QR^{c} \cap \mathcal{J}(\cdot,\,q)$ to the set
$$ QR \cap \mathcal{J}(\cdot,\,q)^{c} \cap (\left[ -L_1(-),\,L_1(+) \right] \times \{q\}). $$
Such a procedure stops if 
$$ QR \cap \mathcal{J}(\cdot,\,q)^{c} \cap (\left[ -L_1(-),\,L_1(+) \right] \times \{q\} ) = \emptyset .$$
In the latter case we need to proceed differently. We introduce the sets 
$$ \mathcal{\tilde C}:= \mathcal{J}(\cdot,\,q) \cap \mathcal{J}^{'}(q) \cap QR^c, \quad \mathcal{\tilde A}:= \mathcal{J}(\cdot,\,q) \cap QR^{c} \setminus \mathcal{\tilde C}, \quad \mathcal{\tilde B}= \mathcal{J}^{'}(q) \cap QR^{c}\setminus \mathcal{\tilde C}$$
and define the function 
$\tilde f: \mathcal{\tilde A} \rightarrow \mathcal{\tilde B}$ 
as follows. If $i=i_{min}:= \min \{ j \geq 0\,\colon \, \varepsilon(j,\,q) \in \mathcal{\tilde A}\}$
$$f(\varepsilon( i,\,q)) \in \arg\min \left\{  \vert \varepsilon j \vert \, \colon \, \varepsilon(j,\,q) \in \mathcal{\tilde B}   \right\} $$
with the choice that $\tilde f(\varepsilon(i,\,q))= \varepsilon(\hat{j},\,q)$
in the case
$$ \arg\min \left\{  \vert \varepsilon j \vert \, \colon \, \varepsilon(j,\,q) \in \mathcal{B}    \right\} =\left\{ \varepsilon(-\hat{j},\, q),\,\varepsilon(\hat{j},\, q) \right\}.$$
If $i> i_{min}$
$$\tilde f(\varepsilon( i,\,q)) \in \arg\min \left\{  \vert \varepsilon j \vert \, \colon \, \varepsilon(j,\,q) \in \mathcal{\tilde B} \setminus \left\{\tilde f(\varepsilon(i_{min},\,q)),\, \dots,\,\tilde f(\varepsilon(i-1,\,q))\right\}   \right\} $$
with the choice $\tilde f(\varepsilon(i,\,q))= \varepsilon(\hat{j},\,q)$ in the case
$$  \arg\min \left\{  \vert \varepsilon j \vert \, \colon \, \varepsilon(j,\,q) \in \mathcal{\tilde B} \setminus \left\{\tilde f(\varepsilon(i_{min},\,q)),\, \dots,\,\tilde f(\varepsilon(i-1,\,q))\right\}   \right\}=\left\{ \varepsilon(-\hat{j},\, q),\,\varepsilon(\hat{j},\, q) \right\}. $$
If $i <  i_{min}$
$$\tilde f(\varepsilon( i,\,q)) \in \arg\min \left\{  \vert \varepsilon j \vert \, \colon \, \varepsilon(j,\,q) \in \mathcal{B} \setminus \left\{ \tilde f(\varepsilon(\tilde{i},\,q))\colon\,  \tilde{i}\geq i+1 \right\}  \right\} $$
with the choice $\tilde f(\varepsilon(i,\,q))= \varepsilon(\hat{j},\,q)$ in the case
$$  \arg\min \left\{  \vert \varepsilon j \vert \, \colon \, \varepsilon(j,\,q) \in \mathcal{\tilde B} \setminus \left\{ \tilde f(\varepsilon(\tilde{i},\,q))\colon\,  \tilde{i}\geq i+1 \right\}  \right\}=\left\{ \varepsilon(-\hat{j},\, q),\,\varepsilon(\hat{j},\, q) \right\}. $$

We observe that, as long as
$\tilde f(\varepsilon(i,\,q))\in \mathcal{\tilde B}$ then 
\begin{equation}\label{disminoreminore}
	d_{\infty}^{\varepsilon}(\varepsilon(i,\,q),\,\partial QR) \geq d^{\varepsilon}_{\infty}(f(\varepsilon(i,\,q)),\, \partial QR).
\end{equation}
For all $ \varepsilon(i,\,q) \in \mathcal{\tilde A}$  and for all $ (x,\,y) \in Q_{\varepsilon}(\varepsilon(i,\,q))$ we uniquely define $ x':=x+ \Pi_{1}(\tilde f(\varepsilon(i,\,q))-\varepsilon(i,\,q))$, and observe that $ (x',\, y) \in Q_{\varepsilon}(\tilde f(\varepsilon(i,\,q)))$
and by \eqref{disminoreminore} we have that for all $(x,\,y)\in Q_{\varepsilon}(\varepsilon(i,\,q))$
\begin{equation}\label{disminorminor123}
	d_{\infty}^{\varepsilon}((x,\,y),\,\partial QR) \geq d^{\varepsilon}_{\infty}((x+ \Pi_{1}(\tilde f(\varepsilon(i,\,q))-\varepsilon(i,\,q)),\,y),\, \partial QR)=d^{\varepsilon}_{\infty}((x',\,y),\, \partial QR).
\end{equation}
Moreover we have
\begin{equation}\label{formulainsiemiasdasdasd}
	\begin{split}
		E^{'}_{q} \Delta QR & = \left(\left(E_{q}^{'} \Delta QR \right) \cap S(q)^{c}\right) \cup \bigcup_{\varepsilon(i,\,q) \in \mathcal{\tilde A}} Q(\varepsilon(i,\,q)) \cup \bigcup_{\varepsilon(i,\,q) \in \mathcal{\tilde C}} Q(\varepsilon(i,\,q)),  \\
		E^{'}_{q+1} \Delta QR & = \left(\left(E_{q}^{'} \Delta QR \right) \cap S(q)^{c}\right) \cup \bigcup_{\varepsilon(i,\,q) \in \mathcal{\tilde A}} Q(\tilde f(\varepsilon(i,\,q))) \cup \bigcup_{\varepsilon(i,\,q) \in \mathcal{\tilde C}} Q(\varepsilon(i,\,q)).
	\end{split}
\end{equation}
Hence, thanks to \eqref{disminorminor123} and \eqref{formulainsiemiasdasdasd}, we obtain 
\begin{equation*}
	\begin{split}
		D_{\varepsilon}(E^{'}_{q},\,QR)&=\int_{E_{q}^{'} \Delta QR} d_{\infty}^{\varepsilon} (z,\, \partial QR)\, dz \\
		& =\int_{E_{q}^{'} \Delta QR \cap S(q)^c} d_{\infty}^{\varepsilon} (z,\, \partial QR)\, dz 
		+ \int_{\bigcup_{\varepsilon(i,\,q)\in \mathcal{\tilde A}} Q_{\varepsilon}(\varepsilon(i,\,q))} d_{\infty}^{\varepsilon} (z,\,  \partial QR)\, dz \\
		&\quad + \int_{\bigcup_{\varepsilon(i,\,q)\in \mathcal{\tilde C}} Q_{\varepsilon}(\varepsilon(i,\,q))} d_{\infty}^{\varepsilon} (z,\, \partial QR)\, dz \\
		& \geq \int_{E_{q}^{'} \Delta QR \cap S(q)^c} d_{\infty}^{\varepsilon} (z,\, \partial QR)\, dz 
		+ \int_{\bigcup_{\varepsilon(i,\,q)\in \mathcal{\tilde A}} Q_{\varepsilon}(\tilde f(\varepsilon(i,\,q)))} d_{\infty}^{\varepsilon} (z,\, \partial QR)\, dz \\
		& \,\,\,\,+ \int_{\bigcup_{\varepsilon(i,\,q)\in \mathcal{\tilde C}} Q_{\varepsilon}(\varepsilon(i,\,q))} d_{\infty}^{\varepsilon} (z,\, \partial QR)\, dz \\
		&=\int_{E_{q+1}^{'} \Delta QR} d_{\infty}^{\varepsilon} (z,\, \partial QR)\, dz = D_{\varepsilon}(E^{'}_{q+1},\,QR)
	\end{split}
\end{equation*} 
 which concludes the second case.\\ 
\noindent  To conclude Step 1 we are left to consider the case $ q= \frac{1}{\varepsilon}\left(L_2^{\varepsilon}(+)+\frac{\varepsilon}{2}\right)$. For such $q$ the proof of $ D_{\varepsilon}(E^{'}_{q},\,QR)  \geq  D_{\varepsilon}(E^{'}_{q+1},\,QR)$ follows the same lines outlined above replacing $L_1(-)$ and $L_1(+)$ by $l(-)$ and $l(+)$, respectively. \\
\textit{Step 2)}\\
In this step we prove that $$D_{\varepsilon}(E^{'},\,QR) \geq D_{\varepsilon}(\mathcal{R}(E),\, QR).$$
To this end we need to show that
$$ D_{\varepsilon}(E^{''}_{p},\, QR) \geq D_{\varepsilon}(E^{''}_{p+1},\, QR) \quad \forall p \geq p_{min}$$
where $E_{p}^{''},\, p_{min}$ are defined in \eqref{defE^{''}_{p+1}} and \eqref{ascissaminimalediE^'}.
The proof of this inequality follows by the same arguments of Step 1). This time the geometric construction involves a fully analogous optimization procedure on columns instead of the one used above on rows. 
 
\end{proof}


\subsection{The incremental problem}
We start by introducing the incremental minimum problem which defines the discrete-in-time area-preserving flow for $\varepsilon \mathbb{Z}^{2}$ crystals.  
We begin by introducing, for all $\varepsilon \in (0,\,1)$, the space of admissible configurations
\begin{equation}\label{defDvareprho}
	\mathcal{AD}_{\varepsilon}:= \big\{ E\in\mathcal{C}_{\varepsilon},\, 
	\vert E \vert =1  \big\}.
\end{equation}	
Let $QR$ be a quasi rectangle of unitary area, that is $QR\in\mathcal{AD}_{\varepsilon}$. Fix a real number $\tau>0$ and consider the problem
\begin{equation*}\label{probdiminimo}
	\min \left\{ \mathrm{P}(E)+\frac{1}{\tau}D_\varepsilon(E,\, QR) \, \colon \, E \in \mathcal{AD}_{\varepsilon} \right\}, \tag{$P_1$} 
\end{equation*}
where $D_{\varepsilon}(E,\,QR)$ is defined in \eqref{dissipazioneD_epsilon}. 
In the next theorem, we prove that if $QR$ is pseudo-axially symmetric, then the minimum problem \eqref{probdiminimo} is equivalent to 
\begin{equation*}\label{probminimoQuasRect}
	\min \left\{ \mathrm{P}(E)+\frac{1}{\tau}D_\varepsilon(E,\, QR) \, \colon \, E \in \mathcal{QR}_{\varepsilon}, \, E \text{ pseudo-axially symmetric, } \vert E \vert= 1 \right\} \tag{$P_2$}.
\end{equation*}

\begin{theorem}\label{thmprinc}
	Let $QR \in \mathcal{QR}_{\varepsilon}$ be a quasi-rectangle with pseudo-axial symmetry. Then the problem \eqref{probdiminimo} is equivalent to the problem \eqref{probminimoQuasRect}.
\end{theorem}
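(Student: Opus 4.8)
The plan is to mirror the continuum argument of Theorem \ref{thm-equiv}, replacing the classical Steiner symmetrization by the discrete Steiner-like rearrangement $\mathcal{R}$. Since every pseudo-axially symmetric quasi-rectangle of unit area is an admissible competitor for \eqref{probdiminimo}, the class appearing in \eqref{probminimoQuasRect} is contained in that of \eqref{probdiminimo}; hence the infimum in \eqref{probdiminimo} is at most the one in \eqref{probminimoQuasRect}, and it suffices to prove the reverse. Concretely, I would show that for every $E\in\mathcal{AD}_{\varepsilon}$ there exists a pseudo-axially symmetric quasi-rectangle $\overline{QR}\in\mathcal{QR}_{\varepsilon}$ with $\vert\overline{QR}\vert=1$ and
$$\mathrm{P}(\overline{QR})+\frac{1}{\tau}D_{\varepsilon}(\overline{QR},QR)\leq \mathrm{P}(E)+\frac{1}{\tau}D_{\varepsilon}(E,QR).$$
As a first step I apply Propositions \ref{steinercostrizionelemma} and \ref{menodissipazione}: the rearranged set $\mathcal{R}(E)$ has rhombus-like shape, $\vert\mathcal{R}(E)\vert=\vert E\vert=1$, the full energy does not increase, and moreover $\mathrm{P}(\mathcal{R}(E))=2\mathrm{P}_1(E)+2\mathrm{P}_2(E)$. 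This reduces the problem to comparing a rhombus-like competitor with a quasi-rectangle.

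In the second step I would build $\overline{QR}$ from $\mathcal{R}(E)$ exactly as the rectangle $\overline{R}$ was built from $E^s$ in Theorem \ref{thm-equiv}, distinguishing the same two cases. If both projections of $\mathcal{R}(E)$ are at least as large as those of the reference, i.e. $\mathrm{P}_1(E)\geq\mathrm{P}_1(QR)$ and $\mathrm{P}_2(E)\geq\mathrm{P}_2(QR)$, then by \eqref{P12ineq} and $D_{\varepsilon}(QR,QR)=0$ one has $\mathrm{P}(QR)=2\mathrm{P}_1(QR)+2\mathrm{P}_2(QR)\leq 2\mathrm{P}_1(E)+2\mathrm{P}_2(E)=\mathrm{P}(\mathcal{R}(E))$, so $QR$ itself is the desired competitor. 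Otherwise, say $\mathrm{P}_1(E)<\mathrm{P}_1(QR)$ (the other case being symmetric after exchanging the roles of rows and columns), I take $\overline{QR}$ to be the quasi-rectangle whose maximal row length equals $\mathrm{P}_1(E)$, whose number of rows is fixed by the area constraint $\#\mathcal{J}=\varepsilon^{-2}$ via Euclidean division, and which is placed and paritied so as to be pseudo-axially symmetric. Since a rhombus-like set of $\varepsilon^{-2}$ cells with maximal row $\mathrm{P}_1(E)/\varepsilon$ is contained in a box of height $\mathrm{P}_2(E)/\varepsilon$, the number of rows of $\overline{QR}$ is controlled so that $\mathrm{P}_2(\overline{QR})\leq\mathrm{P}_2(E)$; together with the equality perimeter formula this gives $\mathrm{P}(\overline{QR})=2\mathrm{P}_1(E)+2\mathrm{P}_2(\overline{QR})\leq 2\mathrm{P}_1(E)+2\mathrm{P}_2(E)=\mathrm{P}(\mathcal{R}(E))$.

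It then remains to prove $D_{\varepsilon}(\overline{QR},QR)\leq D_{\varepsilon}(\mathcal{R}(E),QR)$, which is the crux and the discrete counterpart of the claim $\mathcal{D}(\overline{R},R)\leq\mathcal{D}(E^s,R)$ in Theorem \ref{thm-equiv}. I would follow the same scheme: decompose the symmetric differences $\mathcal{R}(E)\triangle QR$ and $\overline{QR}\triangle QR$ into a common part plus ``excess'' cells of $\mathcal{R}(E)$ lying far from $\partial QR$ and ``deficit'' cells of $\overline{QR}$ lying closer to $\partial QR$; because $\vert\mathcal{R}(E)\vert=\vert\overline{QR}\vert$ these two families have equal cardinality, and I would build a cell-by-cell injection from the deficit cells to the excess cells that does not increase $d^{\varepsilon}_{\infty}(\cdot,\partial QR)$, exploiting the monotonicity of this distance along rows and columns and the fact that $\overline{QR}$ concentrates its mass as close as possible to the reference boundary. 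Summing the one-cell estimate of the type \eqref{distminorazione} over the matched cells yields the bound.

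The hard part will be precisely this dissipation comparison: setting up the matching between excess and deficit cells of the rhombus-like set and the quasi-rectangle while simultaneously enforcing the correct placement of the bump $Q$ and barycenter of $R$ dictated by the four parity cases of Definition \ref{paquasirettangolo}, so that the constructed $\overline{QR}$ is genuinely pseudo-axially symmetric. Once the matching is fixed, each individual comparison reduces to a pointwise estimate on $d^{\varepsilon}_{\infty}$, and combining the perimeter inequality, the dissipation inequality and the energy reduction of Step 1 yields the energy comparison above, and hence the equivalence of \eqref{probdiminimo} and \eqref{probminimoQuasRect}.
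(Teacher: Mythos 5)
Your overall architecture coincides with the paper's: apply the discrete Steiner-like rearrangement (Propositions \ref{steinercostrizionelemma} and \ref{menodissipazione}), split according to how the projections $\mathrm{P}_1,\mathrm{P}_2$ of $\mathcal{R}(E)$ compare with those of $QR$, build the competing quasi-rectangle by Euclidean division of the cell count $N=\varepsilon^{-2}$ by the number of cells in the maximal row, and prove the dissipation inequality by matching excess cells of $\mathcal{R}(E)$ with deficit cells of the new quasi-rectangle through a distance-non-increasing injection. Your first case is the paper's Case 1, and your case $\mathrm{P}_1(\mathcal{R}(E))<\mathrm{P}_1(QR)$, with the box argument giving $\mathrm{P}_2(\overline{QR})\leq \mathrm{P}_2(\mathcal{R}(E))$ and the cell-by-cell matching, is essentially the paper's Case 3, including the decomposition of the two symmetric differences into a common part plus matched excess and deficit families.

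The gap is your dismissal of the remaining case, $\mathrm{P}_1(\mathcal{R}(E))\geq \mathrm{P}_1(QR)$ and $\mathrm{P}_2(\mathcal{R}(E))<\mathrm{P}_2(QR)$, as ``symmetric after exchanging the roles of rows and columns.'' It is not: by Definition \ref{quasirettangolo} the bump $Q$ is always a single horizontal row of height $\varepsilon$ with $r<n=\max\{n,m\}$, and by Definition \ref{paquasirettangolo} it sits on top of $R$, so the transposed construction produces a set that is not an admissible competitor in \eqref{probminimoQuasRect}; moreover the reference distance $d^{\varepsilon}_{\infty}(\cdot,\partial QR)$ is not transposition-invariant, so the matching estimates do not carry over verbatim. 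In addition, in this case the crude bound $\mathrm{P}(\mathcal{R}(E))\geq 2\mathrm{P}_1(\mathcal{R}(E))+2\mathrm{P}_2(\mathcal{R}(E))$ no longer dominates $\mathrm{P}(QR)$, since the vertical projection has decreased. The paper resolves this case by a different mechanism: it shows that $\widetilde{QR}=QR$ itself is the better competitor. First, when $\mathrm{P}_2(\mathcal{R}(E))=\mathrm{P}_2(QR)-\varepsilon$, the area constraint forces $\mathrm{P}_1(\mathcal{R}(E))\geq\mathrm{P}_1(QR)+\varepsilon$ and hence $\mathrm{P}(\mathcal{R}(E))\geq\mathrm{P}(QR)$; in general one iterates the Euclidean-division construction, raising the vertical projection by $\varepsilon$ at each step without increasing the perimeter, until the previous sub-case applies. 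The dissipation comparison is then trivial because $D_{\varepsilon}(QR,QR)=0$. To close your argument you would need either to reproduce this iteration, or to construct a genuinely landscape quasi-rectangle (horizontal bump, wider than $QR$) and redo the perimeter and matching estimates for that geometry.
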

\begin{proof}
	According to the definition of quasi-rectangle in \ref{quasirettangolo} we let $QR=R \cup Q \in \mathcal{QR}_\varepsilon$ with $R$ a rectangle with horizontal sidelength $ \varepsilon n$ and vertical sidelength $ \varepsilon m$ and $Q$ a rectangle with horizontal sidelength $ \varepsilon r $ and vertical sidelength $\varepsilon$, where $n,\,m,\,r \in \mathbb{N}$ and moreover $ m \leq n$ and $r \leq \max\{n,m\}$.
	We claim that for all $E \in \mathcal{AD}_{\varepsilon}$ there exists $\widetilde{QR} \in \mathcal{QR}_{\varepsilon}$ with pseudo-axial symmetry such that $\vert \widetilde{QR} \vert=1$ and
	\begin{equation}\label{rettangolominore}
		\mathrm{P}(E)\geq \mathrm{P}(\widetilde{QR}), \quad D_{\varepsilon}(E,\, QR) \geq D_{\varepsilon}(\widetilde{QR},\,QR).
	\end{equation}
	Thanks to Propositions \ref{steinercostrizionelemma} and \ref{menodissipazione} we have
	\begin{equation*} \mathrm{P}(E)\geq \mathrm{P}(\mathcal{R}(E)), \quad D_{\varepsilon}(E,\, QR) \geq D_{\varepsilon}(\mathcal{R}(E),\,QR).
	\end{equation*}
	Hence the claim is proved provided we show that 
	\begin{equation*} \mathrm{P}(\mathcal{R}(E))\geq \mathrm{P}(\widetilde{QR}), \quad D_{\varepsilon}(\mathcal{R}(E),\,QR)\geq D_{\varepsilon}(\widetilde{QR},\,QR).
	\end{equation*}
	In what follows we need to distinguish three cases.\\
	\textit{Case $1$}\\
	 We assume that 
	\begin{equation}\label{caso1P12geqL12}
		\mathrm{P}_1(\mathcal{R}(E))=\mathrm{P}_1(E) \geq \mathrm{P}_1(QR)= \, \text{ and } \, \mathrm{P}_2(\mathcal{R}(E))=\mathrm{P}_2(E) \geq \mathrm{P}_2(QR),
	\end{equation}
where $\mathrm{P}_1$ and $\mathrm{P}_2$ are defined in \eqref{P12}.
	In this case we set $\widetilde{QR}:= QR$. By formula \eqref{caso1P12geqL12} and \eqref{P12ineq} we have that
	$$ \mathrm{P}(E)\geq 2 \mathrm{P}_1(E)+2 \mathrm{P}_2(E)  \geq 2\mathrm{P}_1(QR)+ 2 \mathrm{P}_2(QR)= \mathrm{P}(QR)$$
	and
	$$ D_{\varepsilon}(E,\, QR) \geq 0 = D_{\varepsilon}(QR,\, QR).$$
	Hence we obtain \eqref{rettangolominore}. \\
	\textit{Case $2$}\\
	In this case we assume
	\begin{equation}\label{170120231209}
	 \mathrm{P}_1(\mathcal{R}(E))=\mathrm{P}_1(E) \geq \mathrm{P}_1(QR) \quad \text{and} \quad \mathrm{P}_2(\mathcal{R}(E))=\mathrm{P}_2(E)<\mathrm{P}_2(QR).
 \end{equation}
	We claim that $\widetilde{QR}= QR$. In order to prove the claim we first assume that 
	\begin{equation}\label{Case2.1}
	\mathrm{P}_2 (\mathcal{R}(E))= \mathrm{P}_2(QR)-\varepsilon.
	\end{equation}
	We start observing that the following implication, whose proof can be obtained by a contradiction argument, holds true:
	$$ \text{if }\mathrm{P}_2 (\mathcal{R}(E))= \mathrm{P}_2(QR)-\varepsilon \quad\text{ then }\quad \mathrm{P}_1(\mathcal{R}(E)) \geq \mathrm{P}_1(QR)+ \varepsilon.$$
	Therefore by \eqref{Case2.1} we have 
	$$ \mathrm{P}(\mathcal{R}(E))= 2 \mathrm{P}_1(\mathcal{R}(E))+ 2\mathrm{P}_2(\mathcal{R}(E))\geq 2 \mathrm{P}_1(QR)+ 2 \varepsilon + 2 \mathrm{P}_2(QR)- 2 \varepsilon = \mathrm{P}(QR)$$
	and $D_\varepsilon(\mathcal{R}(E),\,QR) \geq D_{\varepsilon}(QR,\,QR)=0$. Hence the claim is proved under the assumption \eqref{Case2.1}. We are left to consider the case when 
	\begin{equation}\label{Case2.2}
	\mathrm{P}_2(\mathcal{R}(E)) \leq \mathrm{P}_2(QR)-2 \varepsilon.
	\end{equation}
	Let $N \in \mathbb{N}$ be such that $ 1= \vert E \vert= N \varepsilon^2$ and let $k_1,\,k_2 \in \mathbb{N}$ be such that $\mathrm{P}_1(E)=\varepsilon k_1$ and $\mathrm{P}_2(E)= \varepsilon k_2$.
	By the Euclidean division we have that there exist $q,\, l \in \mathbb{N}$ such that $ N= k_2 q +l$ with $0 \leq l < k_2$. We define the set $QR':=R^{'} \cup Q^{'}$ to be a quasi rectangle with axial symmetry constructed as follows: $R^{'}$ is a rectangle with horizontal sidelength $\varepsilon q$ and vertical sidelength $ \varepsilon k_2$ while $Q^{'}$ is a rectangle with horizontal sidelength $ \varepsilon l$ and with vertical sidelength $\varepsilon$. The construction is complete once we fix the position of the baricenter according to the following alternatives:
	\begin{itemize}
		\item[] if $n \equiv_{2} q $ and $ m \equiv_{2} k_2 $ then $\mathrm{Bar}(R^{'})= \mathrm{Bar}(R)$,
		\item[]  if $n \equiv_2 q$ and $m \not\equiv_{2} k_2$ then $\mathrm{Bar}(R^{'})=(\mathrm{Bar}(R)_x,\, \mathrm{Bar}(R)_y- \frac{\varepsilon}{2})$ if $k_2 \in 2 \mathbb{N} +1$ and $\mathrm{Bar}(R^{'})=(\mathrm{Bar}(R)_x,\, \mathrm{Bar}(R)_y+ \frac{\varepsilon}{2})$ if $k_2 \in 2 \mathbb{N} $,
		\item[] if $n \not\equiv_2 q$ and $m \equiv_{2} k_2$ then $\mathrm{Bar}(R^{'})=(\mathrm{Bar}(R)_x - \frac{\varepsilon}{2},\, \mathrm{Bar}(R)_y)$ if $k_2 \in 2 \mathbb{N} +1$ and $\mathrm{Bar}(R^{'})=(\mathrm{Bar}(R)_x + \frac{\varepsilon}{2},\, \mathrm{Bar}(R)_y)$ if $k_2 \in 2 \mathbb{N} $,
		\item[] if $n \not\equiv_{2} q $ and $ m \not\equiv_{2} k_2 $ then $\mathrm{Bar}(R^{'})=0$ if $k_2 \in 2 \mathbb{N}+1$ and $\mathrm{Bar}(R^{'})=(\frac{\varepsilon}{2}, \, \frac{\varepsilon}{2})$ if $k_2 \in 2 \mathbb{N}$.
	\end{itemize}
    We observe that the set $ QR'$ is a subset of the rectangle with horizontal sidelength $ \varepsilon q$ and vertical sidelength $\mathrm{P}_2(QR)-\varepsilon$, hence the set $Q^{'}$ is a subset of $QR$. 
    We claim that $$\mathrm{P}(\mathcal{R}(E)) \geq \mathrm{P}(QR'),$$ where $\mathrm{P}(\mathcal{R}(E))=2 \varepsilon k_1+ 2 \varepsilon k_2$ and $\mathrm{P}(QR')=2 \varepsilon q+ 2 \varepsilon k_2+ 2 \varepsilon\chi_{\mathbb{N} \setminus \{0\}}(l)$.   We first consider $l=0$. In this case observing that $\mathcal{R}(E)$ is a subset of a rectangle with horizontal sidelength $ \mathrm{P}_1(E)=\varepsilon k_1$ and vertical sidelength $\mathrm{P}_2(E)=\varepsilon k_2$ and the same barycenter of $ \mathcal{R}(E)$, we obtain
    $$ k_1 k_2 \varepsilon^2 \geq \vert \mathcal{R}(E)\vert=1 = \vert E \vert= N \varepsilon^2 =k_2 q \varepsilon^2  \implies k_1 \geq q.$$
      Hence $ \mathrm{P}(\mathcal{R}(E)) \geq \mathrm{P}(QR')$.
    If instead $l \neq 0$, arguing as above, we observe that 
    $$ k_2 k_1 \varepsilon^2 \geq\vert \mathcal{R}(E)\vert=1 = \vert E \vert= N \varepsilon^2= k_2 q \varepsilon^2 + l \varepsilon^2 \quad \text{and} \quad 0<l < k_2  \implies k_1>q .$$ Therefore $ \mathrm{P}(\mathcal{R}(E)) \geq \mathrm{P}(QR')$. 
Now if $Q' \neq \emptyset$ we iterate the argument above replacing the set $\mathcal{R}(E)$ by $QR'$. In this way we obtain again a set with less perimeter. If instead $Q' = \emptyset$ the procedure above can still be applied, the only difference being in the Euclidean division. In fact in this case we divide $N$ by $\frac{\mathrm{P}_2(QR')}{\varepsilon}+1$. This procedure can be repeated until the vertical component of the perimeter is equal to $\mathrm{P}_2(QR)-\varepsilon$. When this happens we fall in the case \eqref{Case2.1} and conclude as before.\\ 
\textit{Case $3$}\\
In this case we assume 

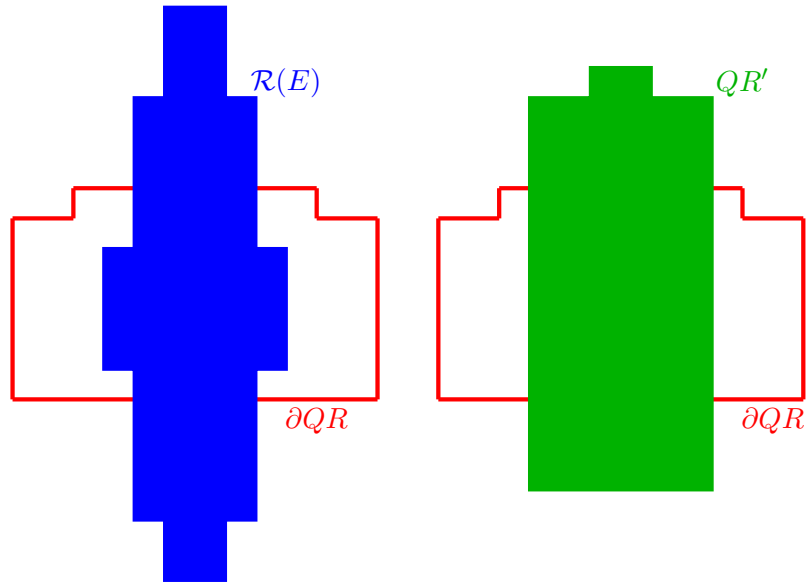
\begin{figure}[h!]
    \centering
    \begin{tikzpicture}[scale=0.4]
        \draw[red,ultra thick] 
        (-16,-3) -- (-4,-3);
        \draw[red,ultra thick] 
        (-16,3) -- (-14,3);
        \draw[red,ultra thick] 
        (-14,3) -- (-14,4);
        \draw[red,ultra thick] 
        (-14,4) -- (-6,4);
        \draw[red,ultra thick] 
        (-6,4) -- (-6,3);
        \draw[red,ultra thick] 
        (-6,3) -- (-4,3);
        \draw[red,ultra thick] 
        (-4,3) -- (-4,-3);
        \draw[red,ultra thick] 
        (-16,3) -- (-16,-3);
        \draw [red,ultra thick]     (-6,-3.7) node{$\partial QR $};
        \draw [red,ultra thick]     (9,-3.7) node{$\partial QR $};
        \draw[blue,very thick] 
        (-9,-9)--(-9,-7)--(-8,-7)--(-8,-2)--(-7,-2)--(-7,2)--(-8,2)--(-8,7)--(-9,7)--(-9,10)--(-11,10)--(-11,7)--(-12,7)--(-12,2)--(-13,2)--(-13,-2)--(-12,-2)--(-12,-7)--(-11,-7)--(-11,-9)-- cycle;
        \fill[blue, opacity=0.1]
        (-9,-9)--(-9,-7)--(-8,-7)--(-8,-2)--(-7,-2)--(-7,2)--(-8,2)--(-8,7)--(-9,7)--(-9,10)--(-11,10)--(-11,7)--(-12,7)--(-12,2)--(-13,2)--(-13,-2)--(-12,-2)--(-12,-7)--(-11,-7)--(-11,-9)-- cycle;
        \draw [blue,ultra thick]     (-7,7.5) node{$\mathcal{R}(E) $};
        \draw[red,ultra thick] 
        (-2,-3) -- (10,-3);
        \draw[red,ultra thick] 
        (-2,3) -- (0,3);
        \draw[red,ultra thick] 
        (0,3) -- (0,4);
        \draw[red,ultra thick] 
        (0,4) -- (8,4);
        \draw[red,ultra thick] 
        (8,4) -- (8,3);
        \draw[red,ultra thick] 
        (8,3) -- (10,3);
        \draw[red,ultra thick] 
        (-2,3) -- (-2,-3);
        \draw[red,ultra thick] 
        (10,3) -- (10,-3);
        \draw[green!70!black,very thick] 
        (7,-6)--(7,7)--(5,7)--(5,8)--(3,8)--(3,7)--(1,7)--(1,-6)-- cycle;
        \fill[green!70!black, opacity=0.2]
        (7,-6)--(7,7)--(5,7)--(5,8)--(3,8)--(3,7)--(1,7)--(1,-6)-- cycle;
        \draw [green!70!black,ultra thick]     (8,7.5) node{$QR' $};
    \end{tikzpicture}
    \caption{The quasi-rectangle $QR $ and the sets $\mathcal{R}(E), \,QR' $}
    \label{fig18062023sera}
\end{figure}


	$$ \mathrm{P}_{1}(\mathcal{R}(E)) \leq \mathrm{P}_1(QR) \,\, \text{ and }\, \,  \mathrm{P}_{2}(\mathcal{R}(E)) > \mathrm{P}_2(QR). $$
Let $N\in \mathbb{N}$ be such that $1= \varepsilon^2 N = \vert E \vert$ and let $k_1 \in \mathbb{N}$ be such that $\mathrm{P}_1(E)=\varepsilon k_1$. By the Euclidean division we have that there exist $q,\,l \in \mathbb{N}$ such that $ N= q k_1+l$ with $l < k_1$. 
We define the set $QR':=R^{'} \cup Q^{'}$ to be a quasi rectangle with axial symmetry constructed as follows: $R^{'}$ is a rectangle with horizontal sidelength $\varepsilon k_1$ and vertical sidelength $\e q$ while $Q^{'}$ is a rectangle with horizontal sidelength $ \varepsilon l$ and with vertical sidelength $\varepsilon$.
Moreover the coordinate of the barycenter of $R'$ depends on $ (n,m)$ as follows:
\begin{itemize}
	\item[] if $n \equiv_{2} k_1 $ and $ m \equiv_{2} q $ then $\mathrm{Bar}(R^{'})= \mathrm{Bar}(R)$,
	\item[]  if $n \equiv_2 k_1$ and $m \not\equiv_{2} q$ then $\mathrm{Bar}(R^{'})=(\mathrm{Bar}(R)_x,\, \mathrm{Bar}(R)_y+ \frac{\varepsilon}{2})$ if $q \in 2 \mathbb{N} +1$ and $\mathrm{Bar}(R^{'})=(\mathrm{Bar}(R)_x,\, \mathrm{Bar}(R)_y+ \frac{\varepsilon}{2})$ if $q \in 2 \mathbb{N} $, 
	\item[] if $n \not\equiv_2 k_1$ and $m \equiv_{2} q$ then $\mathrm{Bar}(R^{'})=(\mathrm{Bar}(R)_x - \frac{\varepsilon}{2},\, \mathrm{Bar}(R)_y)$ if $k_1 \in 2 \mathbb{N} +1$ and $\mathrm{Bar}(R^{'})=(\mathrm{Bar}(R)_x + \frac{\varepsilon}{2},\, \mathrm{Bar}(R)_y)$ if $k_1 \in 2 \mathbb{N} $,
	\item[] if $n \not\equiv_{2} k_1 $ and $ m \not\equiv_{2} q $ then $\mathrm{Bar}(R^{'})=\mathrm{Bar}(R)+(\frac{\varepsilon}{2}, \, \frac{\varepsilon}{2})$ if $k_1 \in 2\N $ and $ q \in 2\N+1$ while $\mathrm{Bar}(R^{'})=\mathrm{Bar}(R)+(-\frac{\varepsilon}{2}, \, \frac{\varepsilon}{2})$ if $k_1 \in 2\N+1 $ and $ q \in 2\N$.
\end{itemize}


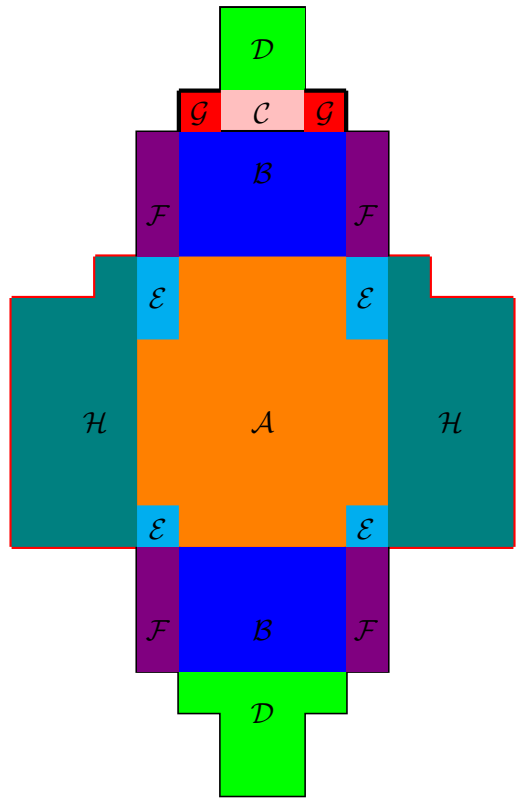
\begin{figure}[h!]
	\begin{center}
	\begin{tikzpicture}[scale=0.55]
		\draw[red,ultra thick] 
		(-16,-3) -- (-4,-3);
		\draw[red,ultra thick]
		(-16,3) -- (-14,3);
		\draw[red,ultra thick]
		(-14,3) -- (-14,4);
		\draw[red,ultra thick]
		(-14,4) -- (-6,4);
		\draw[red,ultra thick]
		(-6,4) -- (-6,3);
		\draw[red,ultra thick] 
		(-6,3) -- (-4,3);
		\draw[red,ultra thick]
		(-4,3) -- (-4,-3);
		\draw[red,ultra thick]
		(-16,3) -- (-16,-3);
		\draw[black,very thick] 
		(-9,-9)--(-9,-7)--(-8,-7)--(-8,-2)--(-7,-2)--(-7,2)--(-8,2)--(-8,7)--(-9,7)--(-9,10)--(-11,10)--(-11,7)--(-12,7)--(-12,2)--(-13,2)--(-13,-2)--(-12,-2)--(-12,-7)--(-11,-7)--(-11,-9)-- cycle;
		\draw[black,very thick] 
		(-7,-6)--(-7,7)--(-9,7)--(-9,8)--(-11,8)--(-11,7)--(-13,7)--(-13,-6)-- cycle;
		\fill[green, opacity=0.2] (-8,-6)--(-8,-7)--(-9,-7)--(-9,-9)--(-11,-9)--(-11,-7)--(-12,-7)--(-12,-6)--cycle;
		\fill[green, opacity=0.2] (-11,8)--(-9,8)--(-9,10)--(-11,10)--cycle;
		\draw [black ,ultra thick]     (-10,9) node{$\mathcal{D} $};
		\draw [black ,ultra thick]     (-10,-7) node{$\mathcal{D} $};
		\fill[pink, opacity=0.2] (-11,8)--(-9,8)--(-9,7)--(-11,7)--cycle;
		\draw [black ,ultra thick]     (-10,7.45) node{$\mathcal{C} $};
		\draw [black ,ultra thick] (-11,7) -- (-9,7);
		\fill[red, opacity=0.2] (-11,7)--(-11,8)--(-12,8)--(-12,7)--cycle;
		\fill[red, opacity=0.2] (-9,7)--(-9,8)--(-8,8)--(-8,7)--cycle;
		\draw [black ,ultra thick] (-12,7) -- (-12,8);
		\draw [black ,ultra thick] (-8,8) -- (-8,7);
		\draw [black ,ultra thick] (-9,8) -- (-8,8);
		\draw [black ,ultra thick] (-12,8) -- (-11,8);
		\draw [black ,ultra thick]     (-8.5,7.45) node{$\mathcal{G} $};
		\draw [black ,ultra thick]     (-11.5,7.45) node{$\mathcal{G} $};
		\fill[blue, opacity=0.2] (-12,7)--(-12,4)--(-8,4)--(-8,7)--cycle;
		\fill[blue, opacity=0.2] (-12,-3)--(-12,-6)--(-8,-6)--(-8,-3)--cycle;
		\draw [black ,ultra thick]     (-10,6) node{$\mathcal{B} $};
		\draw [black ,ultra thick]     (-10,-5) node{$\mathcal{B} $};
		\fill[violet, opacity=0.2] (-12,-3)--(-13,-3)--(-13,-6)--(-12,-6)--cycle;
		\fill[violet, opacity=0.2] (-12,4)--(-13,4)--(-13,7)--(-12,7)--cycle;
		\fill[violet, opacity=0.2] (-8,-3)--(-7,-3)--(-7,-6)--(-8,-6)--cycle;
		\fill[violet, opacity=0.2] (-8,4)--(-7,4)--(-7,7)--(-8,7)--cycle;
		\draw [black ,ultra thick]     (-7.531,-5) node{$\mathcal{F} $};
		\draw [black ,ultra thick]     (-7.531,5) node{$\mathcal{F} $};
		\draw [black ,ultra thick]     (-12.5,-5) node{$\mathcal{F} $};
		\draw [black ,ultra thick]     (-12.5,5) node{$\mathcal{F} $};
		\fill[cyan, opacity=0.2] (-12,-3)--(-13,-3)--(-13,-2)--(-12,-2)--cycle;
		\fill[cyan, opacity=0.2] (-12,4)--(-13,4)--(-13,2)--(-12,2)--cycle;
		\fill[cyan, opacity=0.2] (-8,-3)--(-7,-3)--(-7,-2)--(-8,-2)--cycle;
		\fill[cyan, opacity=0.2] (-8,4)--(-7,4)--(-7,2)--(-8,2)--cycle;
		\draw [black ,ultra thick]     (-7.531,3) node{$\mathcal{E} $};
		\draw [black ,ultra thick]     (-12.5,-2.56) node{$\mathcal{E} $};
		\draw [black ,ultra thick]     (-12.5,3) node{$\mathcal{E} $};
		\draw [black ,ultra thick]     (-7.531,-2.56) node{$\mathcal{E} $};
		\fill[orange, opacity=0.2] (-8,4)--(-12,4)--(-12,2)--(-13,2)--(-13,-2)--(-12,-2)--(-12,-3)--(-8,-3)--(-8,-2)--(-7,-2)--(-7,2)--(-8,2)--cycle;
		\draw [black ,ultra thick]     (-10,0) node{$\mathcal{A} $};
		\fill[teal, opacity=0.2] (-16,-3)--(-16,3)--(-14,3)--(-14,4)--(-13,4)--(-13,-3)--cycle;
		\fill[teal, opacity=0.2] (-7,-3)--(-7,4)--(-6,4)--(-6,3)--(-4,3)--(-4,-3)--cycle;
		\draw [black ,ultra thick]     (-14,0) node{$\mathcal{H} $};
		\draw [black ,ultra thick]     (-5.5,0) node{$\mathcal{H} $};
		
	\end{tikzpicture}
\end{center}
\caption{The sets $\mathcal{A},\mathcal{B},\mathcal{C},\mathcal{D},\mathcal{E}, \mathcal{F}, \mathcal{G}, \mathcal{H}$}
\label{fig18062023}
\end{figure}

 We claim that $$\mathrm{P}(\mathcal{R}(E)) \geq \mathrm{P}(QR'),$$ where $\mathrm{P}(\mathcal{R}(E))=2 \varepsilon k_1+ 2 \varepsilon k_2$ and $\mathrm{P}(QR')=2 \varepsilon k_1+ 2 \varepsilon q+ 2 \varepsilon\chi_{\mathbb{N} \setminus \{0\}}(l)$.   We first assume that $l=0$. We observe that $\mathcal{R}(E)$ is a subset of a rectangle with horizontal sidelength $ \mathrm{P}_1(E)=\varepsilon k_1$, vertical sidelength $\mathrm{P}_2(E)=\varepsilon k_2$ and with the same barycenter of $ \mathcal{R}(E)$. We obtain 
$$ k_1 k_2 \varepsilon^2 \geq \vert \mathcal{R}(E)\vert=1 = \vert E \vert= N \varepsilon^2 =k_1 q \varepsilon^2  \implies k_2 \geq q. $$
Therefore $ \mathrm{P}(\mathcal{R}(E)) \geq \mathrm{P}(QR')$.
If otherwise $l \neq 0$, arguing as above, we observe that 
$$ k_2 k_1 \varepsilon^2 \geq\vert \mathcal{R}(E)\vert=1 = \vert E \vert= N \varepsilon^2= k_1 q \varepsilon^2 + l \varepsilon^2 \quad \text{and} \quad 0<l < k_1  \implies k_2>q .$$
Hence $ \mathrm{P}(\mathcal{R}(E)) \geq \mathrm{P}(QR')$. 
We claim
\begin{equation}\label{disdecres12321321}
D_{\varepsilon}(\mathcal{R}(E),\, QR) \geq D_{\varepsilon}(QR',\,QR).
\end{equation} 
 We define the following sets 
\begin{align*}
	& \mathcal{A}:= \varepsilon \mathbb{Z}^2 \cap \mathcal{R}(E) \cap QR 
	& \quad \mathcal{B}:= \varepsilon \mathbb{Z}^2 \cap \mathcal{R}(E)\cap QR^{c} \cap R^{'} \\
	 & \mathcal{C}:= \varepsilon \mathbb{Z}^2 \cap \mathcal{R}(E) \cap QR^{c} \cap Q^{'}  
	& \quad \mathcal{D}:= \varepsilon \mathbb{Z}^2 \cap \mathcal{R}(E) \cap QR^{c} \cap (QR')^c \\ 
	& 
	\mathcal{E}:= \varepsilon \mathbb{Z}^2 \cap R^{'} \cap \mathcal{R}(E)^{c} \cap QR 
	&  \quad \mathcal{F}:= \varepsilon \mathbb{Z}^2 \cap R^{'} \cap \mathcal{R}(E)^{c} \cap QR^{c} \\
	& \mathcal{G}:= \varepsilon \mathbb{Z}^2 \cap Q^{'} \cap QR^{c} \cap \mathcal{R}(E)^{c} & \quad \mathcal{H}:=\varepsilon \mathbb{Z}^2 \cap QR \cap (QR')^{c}.
\end{align*}
We have that $N= \# \varepsilon \mathbb{Z}^2 \cap \mathcal{R}(E)= \#\mathcal{A} +\#\mathcal{B} +\#\mathcal{C}+ \# \mathcal{D} $
and that $N= \# \varepsilon \mathbb{Z}^2 \cap QR'= \#\mathcal{A} +\#\mathcal{B}+\#\mathcal{C} +\# \mathcal{E} +\#\mathcal{F}+ \# \mathcal{G}.$
As a consequence $ \#\mathcal{D}= \#\mathcal{E} +\#\mathcal{F}+ \#\mathcal{G} $, hence there exists a bijection $f \colon \mathcal{D} \rightarrow \mathcal{E} \cup \mathcal{F} \cup \mathcal{G}$.
 We observe that 
if $\varepsilon(i,\,j) \in \mathcal{D} $ then
\begin{equation*}
	\begin{split}
	&f(\varepsilon(i,\,j)) \in QR  \text{ or } \\
	&f(\varepsilon(i,\,j)) \in QR^c \, \text{ and } d_{\infty}^{\varepsilon}(\varepsilon(i,\,j),\, \partial QR) \geq d_{\infty}^{\varepsilon}(f(\varepsilon(i,\,j)),\, \partial QR).
\end{split}
\end{equation*} 
The last inequality follows from $ f(\varepsilon(i,\,j)) \in QR'$, that is, if $ j>0$ (the case $j \leq 0$ being fully equivalent)
\begin{equation*}
	\begin{split}
		d_{\infty}^{\varepsilon}(\varepsilon(i,\,j),\, \partial QR) \geq&   \mathcal{H}^{1}(\Pi_{2}(QR' \cap \{0\}\times [0,\,+ \infty) ))
		\\
		&-\mathcal{H}^{1}(\Pi_{2}( QR\cap \{0\} \times [0,\,+ \infty)  )) \geq d_{\infty}^{\varepsilon}(f(\varepsilon(i,\,j)),\, \partial QR).
	\end{split}
\end{equation*}
We observe that
\begin{equation}\label{difsimmetrica123123}
\begin{split}
	\mathcal{R}(E) \Delta QR	=&  \bigcup_{\varepsilon(i,\,j) \in \mathcal{B}} Q_{\varepsilon}(\varepsilon(i,\,j)) \cup \bigcup_{\varepsilon(i,\,j) \in \mathcal{C}} Q_{\varepsilon}(\varepsilon(i,\,j))  \cup \bigcup_{\varepsilon(i,\,j) \in \mathcal{D}} Q_{\varepsilon}(\varepsilon(i,\,j))\\ & \quad \cup\bigcup_{\varepsilon(i,\,j) \in \mathcal{E}} Q_{\varepsilon}(\varepsilon(i,\,j)) \cup 
	\bigcup_{\varepsilon(i,\,j) \in \mathcal{H}} Q_{\varepsilon}(\varepsilon(i,\,j)).  \\
	QR' \Delta QR	=&  \bigcup_{\varepsilon(i,\,j) \in \mathcal{B}} Q_{\varepsilon}(\varepsilon(i,\,j)) \cup \bigcup_{\varepsilon(i,\,j) \in \mathcal{C}} Q_{\varepsilon}(\varepsilon(i,\,j)) \cup
	\bigcup_{\varepsilon(i,\,j) \in \mathcal{H}}  Q_{\varepsilon}(\varepsilon(i,\,j)) \\ & \quad \cup\bigcup_{\varepsilon(i,\,j) \in \mathcal{F}} Q_{\varepsilon}(\varepsilon(i,\,j)) \cup 
	\bigcup_{\varepsilon(i,\,j) \in \mathcal{G}} Q_{\varepsilon}(\varepsilon(i,\,j)). 
\end{split}
\end{equation}
Then by \eqref{difsimmetrica123123} we have
\begin{equation*}
\begin{split}
	& D_{\varepsilon}(\mathcal{R}(E),\,QR) 
	=\int_{\mathcal{R}(E) \Delta QR} d_{\infty}^{\varepsilon} (z,\, \partial QR)\, dz  \\
	&= \int_{\bigcup_{\varepsilon(i,\,j) \in \mathcal{B}} Q_{\varepsilon}(\varepsilon(i,\,j)) } d_{\infty}^{\varepsilon} (z,\,  \partial QR)\, dz + \int_{\bigcup_{\varepsilon(i,\,j) \in \mathcal{C}} Q_{\varepsilon}(\varepsilon(i,\,j)) } d_{\infty}^{\varepsilon} (z,\,  \partial QR)\, dz \\
	& \quad +  \int_{\bigcup_{\varepsilon(i,\,j) \in \mathcal{D}} Q_{\varepsilon}(\varepsilon(i,\,j)) } d_{\infty}^{\varepsilon} (z,\,  \partial QR)\, dz +  \int_{\bigcup_{\varepsilon(i,\,j) \in \mathcal{E}} Q_{\varepsilon}(\varepsilon(i,\,j)) } d_{\infty}^{\varepsilon} (z,\,  \partial QR)\, dz  \\
	& \quad +  \int_{\bigcup_{\varepsilon(i,\,j) \in \mathcal{H}} Q_{\varepsilon}(\varepsilon(i,\,j)) } d_{\infty}^{\varepsilon} (z,\,  \partial QR) \\
	&\geq \int_{\bigcup_{\varepsilon(i,\,j) \in \mathcal{B}} Q_{\varepsilon}(\varepsilon(i,\,j)) } d_{\infty}^{\varepsilon} (z,\,  \partial QR)\, dz + \int_{\bigcup_{\varepsilon(i,\,j) \in \mathcal{C}} Q_{\varepsilon}(\varepsilon(i,\,j)) } d_{\infty}^{\varepsilon} (z,\,  \partial QR)\, dz \\
	& \quad +  \int_{\bigcup_{\varepsilon(i,\,j) \in f(\mathcal{D})\setminus\mathcal{E}} Q_{\varepsilon}(\varepsilon(i,\,j)) } d_{\infty}^{\varepsilon} (z,\,  \partial QR)\, dz +  \int_{\bigcup_{\varepsilon(i,\,j) \in \mathcal{H}} Q_{\varepsilon}(\varepsilon(i,\,j)) } d_{\infty}^{\varepsilon} (z,\,  \partial QR)\, dz  \\
	& = \int_{\bigcup_{\varepsilon(i,\,j) \in \mathcal{B}} Q_{\varepsilon}(\varepsilon(i,\,j)) } d_{\infty}^{\varepsilon} (z,\,  \partial QR)\, dz +\int_{\bigcup_{\varepsilon(i,\,j) \in \mathcal{C}} Q_{\varepsilon}(\varepsilon(i,\,j)) } d_{\infty}^{\varepsilon} (z,\,  \partial QR)\, dz   \\
	& +\quad \int_{\bigcup_{\varepsilon(i,\,j) \in  \mathcal{F} \cup \mathcal{G}} Q_{\varepsilon}(\varepsilon(i,\,j)) } d_{\infty}^{\varepsilon} (z,\,  \partial QR)\, dz +  \int_{\bigcup_{\varepsilon(i,\,j) \in \mathcal{H}} Q_{\varepsilon}(\varepsilon(i,\,j)) } d_{\infty}^{\varepsilon} (z,\,  \partial QR)\, dz\\
	&
	=\int_{QR' \Delta QR} d_{\infty}^{\varepsilon} (z,\, \partial QR)\, dz = D_{\varepsilon}(QR',\,QR).
\end{split}
\end{equation*} 
which is the claim. Hence Case 3 is proven for $ \widetilde{QR}= QR'$. 
\end{proof}

\subsection{Flat $P_{|\cdot|_1}$ area-preserving mean-curvature flow of a quasi rectangle in $\varepsilon \mathbb{Z}^2$}\label{flat-eps}
	In this section we introduce the area-preserving mean-curvature flow of a quasi rectangle in the lattice $\varepsilon \mathbb{Z}^2$. We introduce a notion of global
flat solution to the area-preserving mean-curvature flow in the lattice $\varepsilon \mathbb{Z}^2$ which is analogous to the one introduced in Section \ref{Sec1}. To this end, given $E, F \in \mathcal{AD}_{\varepsilon}$ we define
\begin{equation}\label{29052023mat1}
	\mathcal{F}_{\varepsilon,\,\tau}(E,\,F):= \mathrm{P}(E)+ \frac{1}{\tau} D_{\varepsilon}(E,\,F).
\end{equation}

\begin{definition}\label{Deflatinc}
	Let $QR \in \mathcal{QR}_{\varepsilon}$ be a quasi-rectangle with pseudo-axial symmetry such that $\vert QR \vert =1$. Let $\{ E_{k}^{(\varepsilon,\tau)}\}_{k \in \mathbb{N}}$ be a family of sets defined iteratively as
	$$ E_0^{(\varepsilon,\tau)}=QR \quad \text{and} \quad E_{k }^{(\varepsilon,\tau)} \in \underset{E \in \mathcal{AD}_{\varepsilon}}{\arg \min} \left\{\mathcal{F}_{\varepsilon,\tau}(E,\,E_{k-1}^{(\varepsilon,\tau)})\right\} \quad k\geq 1. $$ 
	We define 
	$$ E_{t}^{(\varepsilon, \tau)}:= E_{k }^{(\varepsilon, \tau)} \quad \text{for any } t \in [k \tau,\, (k+1)\tau)$$
	and we call $\{E_t^{(\varepsilon,\tau)}\}_{t \geq 0}$ an approximate flat solution of the area-preserving  mean-curvature flow in the lattice $\varepsilon\mathbb{Z}^2$ with initial datum $QR$. 
\end{definition}
\begin{remark}
	Let $QR$ be a quasi rectangle with pseudo-axial symmetry.
	Thanks to Theorem \ref{thmprinc} we have that every minimizer of \eqref{29052023mat1} with $ E \in \mathcal{AD}_{\varepsilon}$ and $F= QR$ is a quasi-rectangle with pseudo-axial symmetry. Therefore, we have that an approximate flat solution of the area-preserving mean-curvature flow in the lattice $\varepsilon \mathbb{Z}^2$ with initial datum $QR$ is a family of quasi-rectangles with pseudo-axial symmetry.
\end{remark}
Let us consider a family of initial data, say $\{QR_{\varepsilon}\}_\e$, such that for every $\e$ the set $QR_{\varepsilon}$ is a quasirectangle with axial symmetry. We assume that, as $ \varepsilon \rightarrow 0$ $QR_\e$ converge in the Hausdorff distance to the rectangle $[-\frac{a}{2}, \frac{a}{2}]\times [-\frac{b}{2}, \frac{b}{2}]$. In what follows we are interested in proving the existence and give some geometric properties of a limit point, as $ \varepsilon,\tau \rightarrow 0$, of a subsequence of approximated flat solution of the area-preserving mean-curvature flow $E_t^{\varepsilon,\tau}$ whose initial datum is $QR_\e$. As already explained in the introduction of this paper, we focus on the regime $ \varepsilon \sim \tau$. More precisely we will compute the minimizer of the incremental problem in \eqref{29052023mat1} in a subset of $\mathcal{AD}_{\varepsilon}$ and we will describe the asymptotic behavior, as $ \varepsilon \rightarrow 0$, of the approximate flat solution of the area-preserving mean-curvature flow in the lattice $\varepsilon \mathbb{Z}^2$ within this subclass. For the sake of simplicity, in the sequel we assume that $ \tau= \alpha \varepsilon \text{ with } \alpha \in (0,+\infty).$
\begin{definition}\label{defSQR(QR)}
	Let $QR \in \mathcal{QR}_{\varepsilon}$ be a quasi rectangle with pseudo-axial symmetry. We assume that $QR= R \cup Q$ and that the horizontal sidelength of $Q$ is $\varepsilon r, \, r \in \N$ (see the definition \ref{quasirettangolo}). We define  $\mathcal{SQR}_{\varepsilon}(QR)$ as the set of all $QR' \in \mathcal{QR}_{\varepsilon}$ with  $QR'= R' \cup Q'$ such that the horizontal sidelength of $R'$ is $\varepsilon n, \, n \in \N$, $ \vert QR' \vert= \vert QR \vert$, $\mathrm{Bar}(R)= \mathrm{Bar}(R')$ and $r \leq n$.
\end{definition}
In the next theorem, given $QR\in\mathcal{QR}_{\varepsilon}$, we want to compute the minimizer of 
\begin{equation}\label{03062023pom1}
\min \left\{ \mathrm{P}(\widetilde{QR})+ \frac{1}{\alpha \varepsilon}D_{\varepsilon}(\widetilde{QR},\, QR) \colon \, \widetilde{QR} \in \mathcal{SQR}_{\varepsilon}(QR)\right\}.
\end{equation}

\begin{theorem}\label{THMesistenzadeiminimidis}
	Let $a,b,c \in \R$ be such that $0<b<a$, $0 \leq c < a$ and let us consider $\e>0$. We assume that $a=\varepsilon A, \,b= \varepsilon B, \, c= \varepsilon C$ with $A,B,C \in \N$ and $ab+ \varepsilon c=1$. Let $QR \in \mathcal{QR}_\varepsilon$ be the quasi-rectangle with sides $n=A$, $m=B$ and $r=C$ (see definition \ref{quasirettangolo}). For all $\alpha>0$ and for all $\Lambda>a+b$ there exists $\varepsilon_0:=\varepsilon_0(\Lambda, \alpha)$ such that for every $ \varepsilon < \varepsilon_0$ there exists a minimizer $QR'$ of \eqref{03062023pom1}.
Such a minimizer is a quasirectangle with pseudo-axial symmetry $QR^{'}= R' \cup Q' $ with $R'$ and $Q'$ characterized as follows. Setting 
\begin{equation}
\begin{split}
&x'=\frac{2\alpha(-b+a)}{b(a+b)}-\frac{a}{a+b},\\
&X_1= \lfloor x' \rfloor \text{, } X_2=\lceil x' \rceil,\, Y_1 = \bigg\lfloor \frac{ c  + 2  b X_1}{ 2 a -4 \varepsilon X_1 } \bigg\rfloor, Y_2 = \bigg\lfloor \frac{ c  + 2  b X_2 }{ 2 a -4 \varepsilon X_2 } \bigg\rfloor,
\end{split}
\end{equation}
it holds that
\begin{itemize}
\item[1)]
\begin{equation}
	\begin{split}
R' &= \left[-\frac{a'}{2}, \frac{a'}{2}\right] \times \left[-\frac{b'}{2}, \frac{b'}{2}\right]+ \mathrm{Bar}(R) \text{ where }\\
		 &a'\in \{a-2 X_1 \varepsilon,a-2  X_2 \varepsilon \}, \quad  b'\in \{ b+2 Y_1 \varepsilon, b+2 Y_2 \varepsilon \}, \\
\end{split}
\end{equation}		 
\item[2)] 
$$\textrm{the horizontal sidelength of } Q' \textrm{ is }d:= \varepsilon D \in\{ \varepsilon(C + 2X_i B - Y_i (2A -4 X_i)) \text{ for $i=1,2$}\}, $$
\item[3)]
\begin{eqnarray*}
&&a'+b'\leq a+b ,\, \vert a'-a \vert \leq \varepsilon C(\Lambda,\alpha),\,\\ &&\vert b'-b \vert \leq \varepsilon C(\Lambda,\alpha) \text{ and } \vert QR \Delta QR' \vert \leq \varepsilon C(\Lambda,\alpha).
\end{eqnarray*}
\end{itemize}
\end{theorem}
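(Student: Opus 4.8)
The plan is to mirror the continuum analysis of Theorem \ref{THMesistenzadeiminimi}, replacing the real optimization by an optimization over a pair of integer parameters and then controlling the discretization error. First I would reduce the admissible class. By Theorem \ref{thmprinc} every minimizer of the incremental problem is a pseudo-axially symmetric quasi-rectangle, and by the constraints defining $\mathcal{SQR}_{\varepsilon}(QR)$ (equal area, $\mathrm{Bar}(R')=\mathrm{Bar}(R)$ and $r\le n$) every competitor $QR'=R'\cup Q'$ is uniquely encoded by two integers $X,Y\in\mathbb{Z}$: $R'$ is obtained from $R$ by removing $X$ columns on each vertical side and adding $Y$ rows on each horizontal side, so that $a'=(A-2X)\varepsilon$ and $b'=(B+2Y)\varepsilon$, while the width $D\varepsilon$ of the protruding row $Q'$ is forced by conservation of the cell count $N=AB+C$ to be
\[ D=C+2BX-2Y(A-2X). \]
Admissibility ($0\le D<A-2X$, $A-2X>0$) then determines $Y$ essentially as the largest integer keeping $D\ge 0$, which is exactly $Y(X)=\lfloor (C+2BX)/(2(A-2X))\rfloor$; this is the origin of the formulas for $Y_1,Y_2$ and for $D$. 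After this step the problem \eqref{03062023pom1} becomes the minimization of a function $E_\varepsilon$ of the single integer variable $X$ over a finite range, so existence is immediate; a finite case distinction on the parities of $A,B,C$ and on the remainder (parallel to Cases (i)–(iii) of Theorem \ref{THMesistenzadeiminimi}) is needed to pin down $\mathrm{Bar}(R')$ and the exact value of $D$.

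Next I would compute $E_\varepsilon(X)$ explicitly. The perimeter is $\mathrm{P}(QR')=2(A-2X)\varepsilon+2(B+2Y(X))\varepsilon+2\varepsilon\,\chi_{\{D\ge1\}}$, because $QR'$ is HV-convex and hence $\mathrm{P}=2\mathrm{P}_1+2\mathrm{P}_2$. For the dissipation I would use the identity $d^{\varepsilon}_{\infty}(\cdot,\partial QR)=d_{\infty}(\cdot,\partial QR)+\varepsilon/2$ to split
\[ D_\varepsilon(QR',QR)=\mathcal{D}(QR',QR)+\tfrac{\varepsilon}{2}\,|QR'\Delta QR|, \]
evaluating $\mathcal{D}(QR',QR)$ by the same geometric decomposition that produced \eqref{dissipazionebuona}–\eqref{dissipazionecattiva} and computing $|QR'\Delta QR|$ directly. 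Substituting $Y=Y(X)$ and $\tau=\alpha\varepsilon$, a Taylor expansion shows that $(E_\varepsilon(X)-2(a+b))/\varepsilon$ equals a strictly convex parabola $q$ in $X$ plus bounded discrete corrections of order one. The leading parabola is the one governing \eqref{energiacontinXY} after the rescaling $x=X\varepsilon$, $\tau=\alpha\varepsilon$, shifted by the linear contribution of the extra factor $\tfrac{\varepsilon}{2}|QR'\Delta QR|$; minimizing $q$ gives the real minimizer
\[ x'=\frac{2\alpha(a-b)}{b(a+b)}-\frac{a}{a+b}, \]
where the first summand is the cell-unit version of the continuum velocity $\bar x'(0)$ from \eqref{formulafondamentale1} and the second is the genuinely discrete correction produced by the $\varepsilon/2$ shift in $d^{\varepsilon}_{\infty}$.

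Finally I would localize the integer minimizer. Since $q$ has curvature $b(a+b)/(\alpha a)>0$ independent of $\varepsilon$ and unique real minimizer $x'$, it remains to prove that the bounded discrete corrections — the penalty $2\varepsilon\,\chi_{\{D\ge1\}}$ and the $O(\varepsilon)$ fluctuations created by the floor in $Y(X)$ — cannot push the minimizer of $E_\varepsilon$ outside $\{\lfloor x'\rfloor,\lceil x'\rceil\}$. This is the main obstacle: unlike in the continuum, $E_\varepsilon$ is not smooth and its one-step increments are of the same order as the energy gaps one wants to exploit, so a soft convexity argument does not suffice. The plan is to establish a quantitative discrete convexity estimate for $E_\varepsilon$, bounding its second difference from below by the parabola's curvature minus an explicitly controlled error, and then to compare $E_\varepsilon$ at the two bracketing integers against every other admissible $X$ through the strictly positive quadratic gap, choosing $\varepsilon_0(\Lambda,\alpha)$ small enough that the comparison is strict (in the regime where $x'\le 0$ the same comparison selects $\lceil x'\rceil$, giving the pinned configuration). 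The a priori bounds $a'+b'\le a+b$, $|a'-a|\le\varepsilon C(\Lambda,\alpha)$, $|b'-b|\le\varepsilon C(\Lambda,\alpha)$ and $|QR\Delta QR'|\le\varepsilon C(\Lambda,\alpha)$ then follow exactly as in Theorem \ref{THMesistenzadeiminimi}, by testing the minimizer against $QR$ itself (whose energy is $\mathrm{P}(QR)$) and by the explicit expressions for $x'$, $Y(X)$ and $D$.
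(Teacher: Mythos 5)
Your overall strategy coincides with the paper's: reduce to the integer parameters $(X,Y,D)$ with $Y$ and $D$ fixed by the Euclidean division coming from the area constraint, compute the energy explicitly, identify the real minimizer $x'=\frac{2\alpha(a-b)}{b(a+b)}-\frac{a}{a+b}$ (you correctly isolate the genuinely discrete correction $-\frac{a}{a+b}$), and then localize the integer minimizer to $\{\lfloor x'\rfloor,\lceil x'\rceil\}$. Two computational caveats. First, the identity $D_\varepsilon(QR',QR)=\mathcal{D}(QR',QR)+\frac{\varepsilon}{2}|QR'\Delta QR|$ is not exact: $d^\varepsilon_\infty$ is the piecewise constant extension of the cell-center values, so you are replacing $\int_{Q_\varepsilon(i)}d_\infty$ by the midpoint value; this is exact only where $d_\infty(\cdot,\partial QR)$ is affine, and you must check that the cells crossed by the ridges of the distance function contribute only $O(\varepsilon^3)$, which is below the $O(\varepsilon^2)$ scale at which the optimization takes place (the paper avoids this by computing $D_\varepsilon$ directly as lattice sums over the regions $A,B,B',C$). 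Second, the "parabola plus bounded corrections" description is only valid for $X=O(1)$; for $x=\varepsilon X$ of order one the energy is a genuine quartic over $(a-2x)$, and the paper disposes of that regime separately by showing that on $[\frac b2,\frac a2)$ the derivative is positive and the energy is bounded below by $\frac{ab^3+b^4+O(\varepsilon)}{12(a-b)}$, which is of order one and hence never competitive.

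The genuine gap is in your localization step. The discrete-convexity plan — bounding the second difference of $X\mapsto E_\varepsilon(X,Y(X),D(X))$ from below by the parabola's curvature minus a small error — fails as stated, because $Y(X)=\lfloor\frac{C+2BX}{2A-4X}\rfloor$ is a floor of an affine-like function with slope $\approx b/a<1$: at the integers where $Y$ increments, the remainder $D$ drops by an amount of order $A$ and the energy jumps by $O(\varepsilon^2)$ through the terms $4\alpha\varepsilon^2Y$, $a\varepsilon^2Y^2$ and the $d$-dependent terms, which is exactly the order of the curvature $\sim b\frac{a+b}{a}\varepsilon^2$ you want to exploit. So the second differences are not uniformly bounded below by a positive quantity and the sequence is not discretely convex. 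The paper takes a different and more robust route: it keeps $d$ as a free real parameter, extends the energy to real $x$ via the exact relation $y(x)=\frac{c\varepsilon-d\varepsilon+2bx}{2a-4x}$, and shows via the sign of the numerator $f(\varepsilon,x)$ of $\partial_xE_\varepsilon$ (with the implicit function theorem applied at the simple zero $x=0$ of $f(0,\cdot)$) that for each fixed $d$ the map $x\mapsto E_\varepsilon(x,y(x),d)$ is strictly decreasing on $(0,\bar x(\varepsilon))$ and strictly increasing on $(\bar x(\varepsilon),\frac a2)$, with $\bar x(\varepsilon)=x'\varepsilon+O(\varepsilon^2)$ uniformly in $d$ because $d$ enters $f$ only through $\varepsilon^2$-prefactored terms; the integrality of $Y$ and the value of $D$ are then imposed only at the very end through the Euclidean division. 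If you want to keep a purely discrete argument you would have to work with the pair $(X,Y)$ jointly (where the leading quadratic form is convex) rather than with the composed single-variable sequence, and control the remainder's contribution separately; as written, the convexity claim on which your comparison rests is false.
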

\begin{proof}
	In what follows all constants depend only on $\Lambda$ and $\alpha$.
	 We observe that the problem \eqref{03062023pom1} is equivalent to
	\begin{equation}
		\min \left\{ \alpha \varepsilon \mathrm{P}(\widetilde{QR})+ D_{\varepsilon}(\widetilde{QR},\, QR) \colon \, \widetilde{QR} \in \mathcal{SQR}_{\varepsilon}(QR)\right\}.
	\end{equation} 
To shorten notation we define 
\begin{equation}\label{15052023energia}
	E_\varepsilon(\widetilde{QR}):= \alpha \varepsilon \mathrm{P}(\widetilde{QR})+ D_{\varepsilon}(\widetilde{QR},\, QR).
\end{equation}
We divide the proof into several steps.\\
	\textit{Step 1)}\\
	In this step we want to prove that the minimum value and the minimizer of the energy $ E_\e$ can be obtained considering the minimum value and the minimizer of a function defined on $\N^3$. According to the definition of $\widetilde{QR} \in \mathcal{SQR}_{\varepsilon}(QR)$, setting $\widetilde{QR}=\widetilde{R}\cup\widetilde{Q}$, we have that $ \vert QR \vert= \vert \widetilde{QR} \vert $ and that $\mathrm{Bar}(R)= \mathrm{Bar}(\widetilde{R})$. Moreover there exist $X, \, Y \in \mathbb{N}$ such that
	the horizontal sidelength of $\widetilde{R}$ is $A-2X$, the vertical sidelength of $\widetilde{R}$ is $B-2Y$ and the horizontal sidelength of $\widetilde{Q}$ is $d:=\e D$ with $d < c$. Since  $ \vert QR \vert= \vert \widetilde{QR} \vert $, we have that
	\begin{equation}
		ab+c\varepsilon= (a-2X\varepsilon)(b+2Y \varepsilon)+\varepsilon d \; \implies \; \varepsilon Y= \frac{c\varepsilon-d \varepsilon +2 \varepsilon b}{2a-4\varepsilon X}.
	\end{equation} 
The perimeter of the set $\widetilde{QR}$ is 
\begin{equation}\label{perdiscTHM}
	\mathrm{P}(\widetilde{QR})=2(a-2\varepsilon X)+ 2(b+2\varepsilon Y)+ 2\varepsilon \chi_{(0,+\infty)}(d).
\end{equation}
We observe that the function $d \rightarrow 2\varepsilon \chi_{(0,+\infty)}(d)$ will not affect the calculation of the energy minimizer, as it only raises or lowers the energy by a constant amount.
By the symmetry of the set $\widetilde{QR} \Delta QR$, the dissipation is 

\begin{figure}[h!]
    \centering
    \begin{tikzpicture}[scale=0.5]
        \draw[red,ultra thick] 
        (-9,-4)--(-9,4)--(-6,4)--(-6,5)--(6,5)--(6,4)--(9,4)--(9,-4)-- cycle;
        \draw[blue,ultra thick] (-7,-5.5)--(-7,6)--(-1,6)--(-1,7)--(1,7)--(1,6)--(7,6)--(7,-5.5)--cycle;
        \fill[green!70!black, opacity=0.2] (-9,-4)--(-9,4)--(-7,4)--(-7,-4)--cycle;
        \fill[green!70!black, opacity=0.2] (9,-4)--(9,4)--(7,4)--(7,-4)--cycle;
        \fill[orange, opacity=0.2] (-7,-5.5)--(7,-5.5)--(7,-4)--(-7,-4)--cycle;
        \fill[purple, opacity=0.2] (-7,4)--(-7,6)--(-1,6)--(-1,7)--(1,7)--(1,6)--(7,6)--(7,4)--(6,4)--(6,5)--(-6,5)--(-6,4)--cycle;
        \draw [black,ultra thick]     (0,6) node{$C $};
        \draw [blue,ultra thick]      (2.1,7) node{$\partial \widetilde{QR} $};
        \draw [black,ultra thick]     (0,-4.7) node{$A $};
        \draw [black,ultra thick]     (8,0) node{$B $};
        \draw [black,ultra thick]     (-8,0) node{$B' $};
        \draw [red,ultra thick]       (-10,3) node{$\partial QR $};
    \end{tikzpicture}
    \caption{The quasi-rectangles $QR $ and $\widetilde{QR} $}
    \label{fig24062023mat1}
\end{figure}
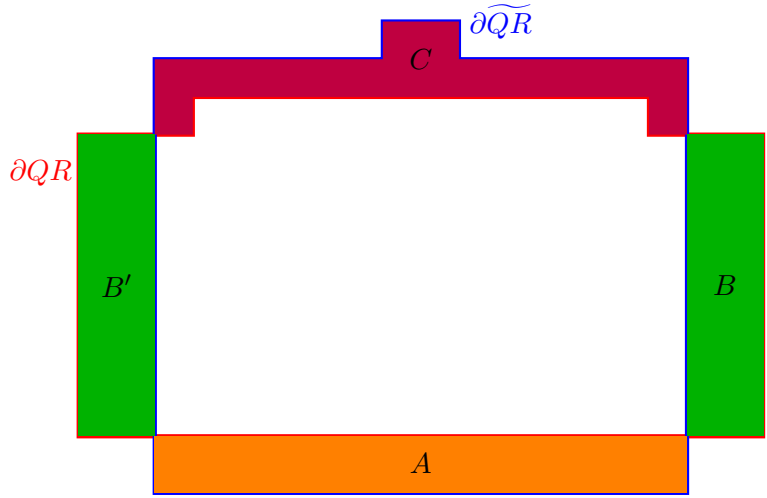

\begin{equation}
	\begin{split}
	\mathcal{D}_{\varepsilon}&(\widetilde{QR},QR)= \int_{\widetilde{QR}\Delta QR} d_{\infty}^{\varepsilon}(z, \partial QR)dz \\
	 &=\int_{A} d_{\infty}^{\varepsilon}(z, \partial QR)dz + \int_{B} d_{\infty}^{\varepsilon}(z, \partial QR)dz + \int_{B'} d_{\infty}^{\varepsilon}(z, \partial QR)dz+  \int_{C} d_{\infty}^{\varepsilon}(z, \partial QR)dz
\end{split}
\end{equation}
where
\begin{align}
&A= \left[-\frac{a-2\varepsilon X}{2}, \frac{a-2\varepsilon X}{2}\right]\times \left[-\frac{\varepsilon Y}{2}, \frac{\varepsilon Y}{2}\right]+\left(\mathrm{Bar}_x(R),\,\mathrm{Bar}_y(R)-\frac{b}{2}- \frac{\varepsilon Y}{2}\right) ,\\
& B= \left[- \frac{\varepsilon X}{2}, \frac{\varepsilon X}{2} \right] \times\left[-\frac{b}{2}, \frac{b}{2}\right]+ \left(\mathrm{Bar}_x(R)+ \frac{a+ \varepsilon X}{2}, \mathrm{Bar}_y(R)\right), \\
& B'= \left[ - \frac{\varepsilon X}{2}, \frac{\varepsilon X}{2} \right] \times\left[-\frac{b}{2}, \frac{b}{2}\right]+ \left(\mathrm{Bar}_x(R)- \frac{a+ \varepsilon X}{2}, \mathrm{Bar}_y(R)\right),\\
& C= \widetilde{QR} \setminus \left(QR \cup A \cup B \cup B' \right).
\end{align}
By a change of variables we have that 
\begin{equation}\label{15052023sera7}
	\begin{split}
		\mathcal{D}_{\varepsilon}&(\widetilde{QR},QR)= \int_{\widetilde{QR}\Delta QR} d_{\infty}^{\varepsilon}(z, \partial QR)dz \\
		&=\int_{A} d_{\infty}^{\varepsilon}(z, \partial QR)dz + 2\int_{B} d_{\infty}^{\varepsilon}(z, \partial QR)dz +  \int_{C} d_{\infty}^{\varepsilon}(z, \partial QR)dz.
	\end{split}
\end{equation}
We compute the different terms in the expression above. We have that
\begin{equation}\label{15052023sera6}
	\mathcal{D}_{A}:= \int_{A} d_{\infty}^{\varepsilon}(z, \partial QR)dz= \sum_{j=1}^{Y} j\varepsilon \varepsilon (a-2\varepsilon X)= (a-2\varepsilon X)\frac{\varepsilon Y (\varepsilon Y+\varepsilon)}{2}
\end{equation}
and
\begin{equation}
	\begin{split}
	\mathcal{D}_{C}:=& \int_{C} d_{\infty}^{\varepsilon}(z, \partial QR)dz= \int_{D} d_{\infty}^{\varepsilon}(z, \partial QR)dz+ \int_{E} d_{\infty}^{\varepsilon}(z, \partial QR)dz +\int_{E'} d_{\infty}^{\varepsilon}(z, \partial QR)dz \\
	&+ \int_{F} d_{\infty}^{\varepsilon}(z, \partial QR)dz +\int_{F'} d_{\infty}^{\varepsilon}(z, \partial QR)dz 
	+ \int_{G} d_{\infty}^{\varepsilon}(z, \partial QR)dz +\int_{G'} d_{\infty}^{\varepsilon}(z, \partial QR)dz
\end{split}
\end{equation}
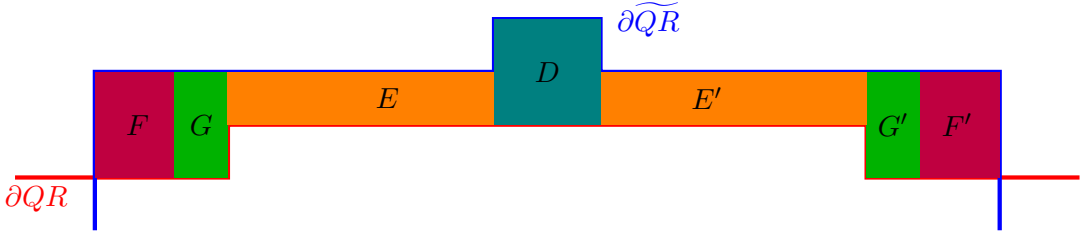
\begin{figure}[h!]
    \centering
    \begin{tikzpicture}[scale=0.7]
        \draw[red,ultra thick]
        (10,4)--(6,4)--(6,5)--(-6,5)--(-6,4)--(-10,4);
        \draw[blue,ultra thick]
        (-8.5,3)--(-8.5,6)--(-1,6)--(-1,7)--(1,7)--(1,6)--(8.5,6)--(8.5,3);
        \fill[purple, opacity=0.3] (-8.5,4)--(-8.5,6)--(-7,6)--(-7,4)--cycle;
        \draw [black,ultra thick]     (-7.7,5) node{$F $};
        \fill[purple, opacity=0.3] (8.5,4)--(8.5,6)--(7,6)--(7,4)--cycle;
        \draw [black,ultra thick]     (7.7,5) node{$F' $};
        \fill[green!70!black, opacity=0.3] (7,6)--(7,4)--(6,4)--(6,6)--cycle;
        \draw [black,ultra thick]     (6.5,5) node{$G' $};
        \fill[green!70!black, opacity=0.3] (-7,6)--(-7,4)--(-6,4)--(-6,6)--cycle;
        \draw [black,ultra thick]     (-6.5,5) node{$G $};
        \fill[orange, opacity=0.3] (-6,5)--(-6,6)--(-1,6)--(-1,5)--cycle;
        \draw [black,ultra thick]     (-3,5.5) node{$E$};
        \fill[orange, opacity=0.3] (6,5)--(6,6)--(1,6)--(1,5)--cycle;
        \draw [black,ultra thick]     (3,5.5) node{$E'$};
        \fill[teal, opacity=0.3] (1,7)--(1,5)--(-1,5)--(-1,7)--cycle;
        \draw [black,ultra thick]     (0,6) node{$D$};
        \draw [blue,ultra thick]      (1.9,7) node{$\partial \widetilde{QR}$};
        \draw [red,ultra thick]       (-9.6,3.6) node{$\partial QR$};
    \end{tikzpicture}
    \caption{The sets $F,F',G,G',E,E',D$}
    \label{fig24062023mat2}
\end{figure}

where
\begin{align}
	& D:= \left[ -\frac{d}{2}, \frac{d}{2}\right] \times  \left[ -\frac{\varepsilon Y}{2}, \frac{\varepsilon Y}{2}\right]+ \left(\mathrm{Bar}_x(R), \mathrm{Bar}_y(R)+ \frac{b}{2}+ \varepsilon + \frac{\varepsilon Y}{2}\right),  \\
	& E:= \left[-\frac{c-d}{2},\frac{c-d}{2} \right] \times \left[-\frac{\varepsilon Y-\varepsilon}{2}, \frac{\varepsilon Y-\varepsilon}{2} \right]+ \left( \mathrm{Bar}_x(R)-\frac{c+d}{4}, \mathrm{Bar}_y+\frac{b}{2}+\frac{\varepsilon Y+ \varepsilon}{2}\right), \\
	& E':= \left[-\frac{c-d}{2},\frac{c-d}{2} \right] \times \left[-\frac{\varepsilon Y-\varepsilon}{2}, \frac{\varepsilon Y-\varepsilon}{2} \right]+ \left( \mathrm{Bar}_x(R)+\frac{c+d}{4}, \mathrm{Bar}_y+\frac{b}{2}+ \frac{\varepsilon Y+ \varepsilon}{2}\right), \\
	& G:= \left[-\frac{\varepsilon}{2}, \frac{\varepsilon }{2}\right] \times \left[-\frac{\varepsilon Y}{2}, \frac{\varepsilon Y }{2}\right] + \left(\mathrm{Bar}_x(R)-\frac{c}{2}-\frac{\varepsilon}{2}, \mathrm{Bar}_y(R)+ \frac{b}{2}+ \frac{\varepsilon Y}{2}\right), \\
&	G':= \left[-\frac{\varepsilon}{2}, \frac{\varepsilon }{2}\right] \times \left[-\frac{\varepsilon Y}{2}, \frac{\varepsilon Y }{2}\right] + \left(\mathrm{Bar}_x(R)+\frac{c}{2}+\frac{\varepsilon}{2}, \mathrm{Bar}_y(R)+ \frac{b}{2}+ \frac{\varepsilon Y}{2}\right),
\end{align}
\begin{multline}
	F:= \left[-\frac{a-c-2\varepsilon X}{2},\frac{a-c-2\varepsilon X}{2}\right] \times \left[-\frac{\varepsilon Y}{2},\frac{\varepsilon Y}{2}\right] \\+ \left(\mathrm{Bar}_x(R)- \frac{a-2\varepsilon X+2\varepsilon+c}{4}, \mathrm{Bar}_y(R)+ \frac{b+\varepsilon Y}{2}\right)
\end{multline}
and 
\begin{multline}
	F':= \left[-\frac{a-c-2\varepsilon X}{2},\frac{a-c-2\varepsilon X}{2}\right] \times \left[-\frac{\varepsilon Y}{2},\frac{\varepsilon Y}{2}\right]
	\\+  \left(\mathrm{Bar}_x(R)  + \frac{a-2\varepsilon X+c+2\varepsilon}{4}, \mathrm{Bar}_y(R)+ \frac{b+\varepsilon Y}{2}\right).
\end{multline}
By a change of variables we obtain that
\begin{equation}\label{15052023sera5}
	\begin{split}
		\mathcal{D}_{C}=& \int_{C} d_{\infty}^{\varepsilon}(z, \partial QR)dz= \int_{D} d_{\infty}^{\varepsilon}(z, \partial QR)dz+ 2\int_{E} d_{\infty}^{\varepsilon}(z, \partial QR)dz \\
		&+2 \int_{F} d_{\infty}^{\varepsilon}(z, \partial QR)dz 
		+ 2\int_{G} d_{\infty}^{\varepsilon}(z, \partial QR)dz,
	\end{split}
\end{equation}

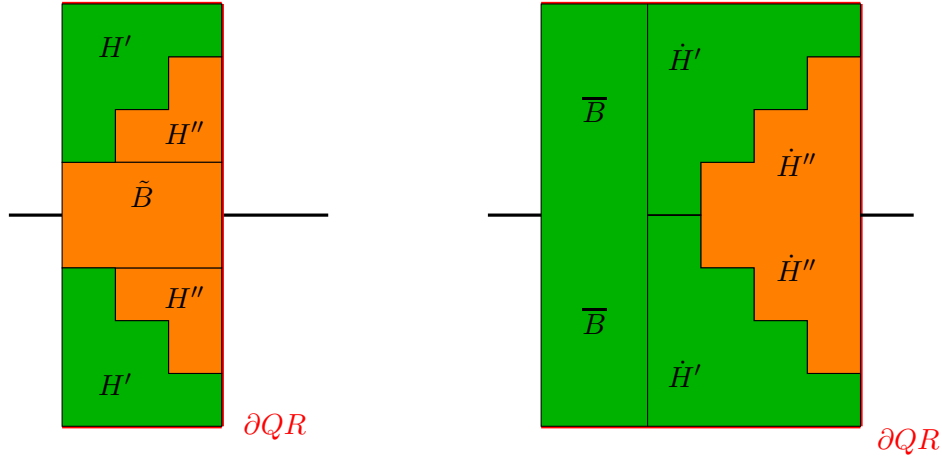
\begin{figure}[h!]
	\begin{center}
		\begin{tikzpicture}[scale=0.7]
			\draw[black, very thick]
			(-12,0) -- (-6,0);
			\draw[red,ultra thick] 
			(-11,-4) -- (-8,-4);
			\draw[red,ultra thick] 
			(-11,4) -- (-8,4);
			\draw[red,ultra thick]
			(-8,-4) -- (-8,4);
			\draw (-11,1) -- (-8,1);
			\draw (-11,-1) -- (-8,-1);
			\fill [orange, opacity=0.2]
			(-10,1)-- (-10,1)--(-10,2)--(-9,2)--(-9,3)--(-8,3)--(-8,1)--cycle;
			\draw 
			(-10,1)-- (-10,1)--(-10,2)--(-9,2)--(-9,3)--(-8,3)--(-8,1)--cycle;
			\fill [orange, opacity=0.2]
			(-10,-1)--(-10,-1)--(-10,-2)--(-9,-2)--(-9,-3)--(-8,-3)--(-8,-1)--cycle;
			\draw
			(-10,-1)--(-10,-1)--(-10,-2)--(-9,-2)--(-9,-3)--(-8,-3)--(-8,-1)--cycle;
			\fill [orange, opacity=0.2]
			(-11,1)--(-8,1)--(-8,-1)--(-11,-1)--cycle;
			\draw 
			(-11,1)--(-8,1)--(-8,-1)--(-11,-1)--cycle;
			\fill [green!70!black, opacity=0.2]
			(-11,4)--(-11,1)--(-10,1)--(-10,2)--(-9,2)--(-9,3)--(-8,3)--(-8,4)--cycle;
			\draw 
			(-11,4)--(-11,1)--(-10,1)--(-10,2)--(-9,2)--(-9,3)--(-8,3)--(-8,4)--cycle;
			\fill [green!70!black, opacity=0.2]
			(-11,-4)--(-11,-1)--(-10,-1)--(-10,-2)--(-9,-2)--(-9,-3)--(-8,-3)--(-8,-4)--cycle;
			\draw
			(-11,-4)--(-11,-1)--(-10,-1)--(-10,-2)--(-9,-2)--(-9,-3)--(-8,-3)--(-8,-4)--cycle;
			\draw [black,very thick]	
			(-10,-3.2) node{$H' $} ;
			\draw [black,very thick]	
			(-8.7,-1.55) node{$H'' $} ;
			\draw [black,very thick]	
			(-10,3.2) node{$H' $} ;
			\draw [black,very thick]	
			(-8.7,1.56) node{$H'' $} ;
			\draw [black,very thick]	
			(-9.5,0.4) node{$\tilde{B} $} ;
			\draw [red,very thick]	
			(-7,-4) node{$\partial QR $} ;
			\draw[red,ultra thick] 
			(-2,-4) -- (4,-4);
			\draw[red,ultra thick] 
			(4,-4) -- (4,4);
			\draw[red,ultra thick]
			(-2,4) -- (4,4);
			\draw[black,very thick]
			(-3,0) -- (5,0);
			 \fill[orange, opacity=0.2] 
			(1,0)--(1,1)--(2,1)--(2,2)--(3,2)--(3,3)--(4,3)--(4,-3)--(3,-3)--(3,-2)--(2,-2)--(2,-1)--(1,-1)--(1,0)--cycle; 
			\draw 
			(1,0)--(1,1)--(2,1)--(2,2)--(3,2)--(3,3)--(4,3)--(4,-3)--(3,-3)--(3,-2)--(2,-2)--(2,-1)--(1,-1)--(1,0)--cycle;
			 \fill[green!70!black, opacity=0.2] 
			(0,0)--(1,0)--(1,1)--(2,1)--(2,2)--(3,2)--(3,3)--(4,3)--(4,4)--(-2,4)--(-2,-4)--(4,-4)--(4,-3)--(3,-3)--(3,-2)--(2,-2)--(2,-1)--(1,-1)--(1,0)--(0,0)--cycle;
			\draw
			(0,0)--(1,0)--(1,1)--(2,1)--(2,2)--(3,2)--(3,3)--(4,3)--(4,4)--(-2,4)--(-2,-4)--(4,-4)--(4,-3)--(3,-3)--(3,-2)--(2,-2)--(2,-1)--(1,-1)--(1,0)--(0,0)--cycle;
			\draw (0,4)--(0,-4);
			\draw [black,very thick]	
			(-1,2) node{$\overline{B} $} ;
			\draw [black,very thick]	
			(-1,-2) node{$\overline{B} $} ;
			\draw [black,very thick]	
			(0.7,3) node{$\dot{H}' $} ;
			\draw [black,very thick]	
			(0.7,-3) node{$\dot{H}' $} ;
			\draw [black,very thick]	
			(2.8,1) node{$\dot{H}'' $} ;\draw [black,very thick]	
			(2.8,-1) node{$\dot{H}'' $} ;
			\draw [red,very thick]	
			(4.9,-4.3) node{$\partial QR $} ;
		\end{tikzpicture}
	\end{center}
	\caption{In the left side the case $ \e X \leq  \frac{b}{2}$ and in the right side the case $ \e X > \frac{b}{2}$ }
	\label{fig24062023pom1}
\end{figure}

where
\begin{align}\label{15052023sera4}
	\qquad& \int_{D} d_{\infty}^{\varepsilon}(z, \partial QR)dz= \sum_{j=1}^{Y} j \varepsilon^2 d= \frac{d \varepsilon^2 Y(Y+1)}{2}, \\
	 & \int_{E} d_{\infty}^{\varepsilon}(z, \partial QR)dz= \sum_{j=1}^{Y-1} \frac{c-d}{2}\varepsilon^2 j= \frac{(c-d)\varepsilon^2(Y-1)Y }{4},  \\
	&  \int_{F} d_{\infty}^{\varepsilon}(z, \partial QR)dz= \sum_{j=1}^{Y} \frac{(a-2\varepsilon X-c-2\varepsilon)}{2} \varepsilon^2 j= \frac{(a-2\varepsilon X-c-2\varepsilon)\varepsilon^2(Y+1)Y}{4}, \\
	& \int_{G} d_{\infty}^{\varepsilon}(z, \partial QR)dz= \sum_{j=1}^{Y} j \varepsilon^3= \varepsilon^3 \frac{Y(Y+1)}{2}.
\end{align}
In order to compute 
\begin{equation}
\mathcal{D}_B:=	2\int_{B} d_{\infty}^{\varepsilon}(z, \partial QR)dz
\end{equation}
we distinguish two cases, namely $\varepsilon X \leq \frac{b}{2}$ or $\varepsilon X > \frac{b}{2}$. \\

\textit{Case $\varepsilon X \leq \frac{b}{2}$ } By the symmetry of the quasi rectangle we can write
\begin{equation}\label{15052023pom1}
	\mathcal{D}_B:=	2\int_{B} d_{\infty}^{\varepsilon}(z, \partial QR)dz= 2 \int_{\tilde{B}}d_{\infty}^{\varepsilon}(z, \partial QR)dz
	+  4 \int_{\hat{B}}d_{\infty}^{\varepsilon}(z, \partial QR)dz  
\end{equation}
where 
\begin{align}
	& \tilde{B}:= \left[ -\frac{\varepsilon X}{2}, \frac{\varepsilon X}{2}\right] \times \left[ -\frac{b-2\varepsilon X}{2}, \frac{b-2\varepsilon X}{2}\right]+ \left(\mathrm{Bar}_x(R)+ \frac{2a -3\varepsilon X}{4}, \mathrm{Bar}_y(R)\right), \\
	& \hat{B}:= \left[ -\frac{\varepsilon X}{2}, \frac{\varepsilon X}{2}\right] \times \left[ -\frac{\varepsilon X}{2}, \frac{\varepsilon X}{2}\right]+ \left(\mathrm{Bar}_x(R)+ \frac{a+\varepsilon X}{2}, \mathrm{Bar}_y(R)+ \frac{b-\varepsilon X}{2}\right).
\end{align}
For the first term we have that
\begin{equation}\label{bitilde}
2	\int_{\tilde{B}}  d_{\infty}^{\varepsilon}(z, \partial QR)dz= 2 \varepsilon^2 (b-2\varepsilon X)\sum_{j=1}^{X}j= (b-2\varepsilon X)\varepsilon X (\varepsilon X+ \varepsilon).
\end{equation}

For the second term we first perform a change of variables and write that
\begin{equation}\label{15052023pom2}
	\int_{\hat{B}}d_{\infty}^{\varepsilon}(z, \partial QR)dz  = \int_{H}d_{\infty}^{\varepsilon}(z, L)dz  
\end{equation}
where 
\begin{equation}
	H:= \left[ 0, \varepsilon X\right] \times \left[ 0, \varepsilon X\right], \quad L:= \left[ 0, \varepsilon X\right] \times \left\{\varepsilon X\right\} \cup  \left\{\varepsilon X \right\} \times \left[ 0, \varepsilon X\right].
\end{equation}
We then simplify the computation by writing 
\begin{equation}\label{15052023pom3}
	\int_{H}d_{\infty}^{\varepsilon}(z, \partial QR)dz  = \int_{H'}d_{\infty}^{\varepsilon}(z, L)dz  + \int_{H''}d_{\infty}^{\varepsilon}(z, L)dz  
\end{equation}
where
\begin{equation}
	H':= \bigcup_{i=1}^{X} \bigcup_{j=1}^{X+1-j}Q_{\varepsilon}(\varepsilon(i,j)), \quad H'':= \bigcup_{j=1}^{X} \bigcup_{i=1}^{X-j}Q_{\varepsilon}(\varepsilon(i,j)).
\end{equation}
We eventually obtain
\begin{align}\label{15052023pom4}
& 4\int_{H}d_{\infty}^{\varepsilon}(z, \partial QR)dz   = 4 \int_{H'}d_{\infty}^{\varepsilon}(z, L)dz  + 4\int_{H''}d_{\infty}^{\varepsilon}(z, L)dz  \\
&  \quad =4 \sum_{i=1}^{X} \sum_{j=1}^{X-i+1} j \varepsilon^3 + 4 \sum_{j=1}^{X} \sum_{i=1}^{X-j} i \varepsilon^3= 4 \varepsilon^3 \sum_{k=1}^{X} \left(2 \sum_{h=1}^{X-k}h +X-k+1\right) \\
& \quad  = 4 \varepsilon^3 \sum_{k=1}^{X}(X-k+1)^2= \frac{2}{3} \varepsilon^3 X (2 X^2 +3 X +1).
\end{align}
Hence, thanks to \eqref{15052023pom1}, \eqref{bitilde}, \eqref{15052023pom2}, \eqref{15052023pom3}, \eqref{15052023pom4} we have
\begin{equation}\label{15052023sera3}
	\mathcal{D}_B= \frac{2}{3} \varepsilon^3 X (2 X^2 +3 X +1)+ (b-2\varepsilon X)\varepsilon X (\varepsilon X+ \varepsilon).
\end{equation}\\

\textit{Case $ \varepsilon X > \frac{b}{2}$ } We need to consider two sub-cases. The first one is when $b= \varepsilon B$ and  $B= 2B' $ with $ B,B' \in \N$, while the second one corresponds to 
$B=2B'+1$ with $B,B' \in \N$. In the first case, using the symmetries of the quasi rectangle we can write that 

\begin{equation}\label{15052023pom5}
 \mathcal{D}_B:=	2\int_{B} d_{\infty}^{\varepsilon}(z, \partial QR)dz= 4 \int_{\overline{B}}d_{\infty}^{\varepsilon}(z, \partial QR)dz
 +  4 \int_{\dot{B}}d_{\infty}^{\varepsilon}(z, \partial QR)dz 
\end{equation}
where
\begin{multline}
	\overline{B}:= \left[-\frac{(B'-X)\varepsilon}{2}, \frac{(B'-X)\varepsilon}{2} \right] \times \left[ - \frac{B' \varepsilon }{2}, \frac{ B' \varepsilon }{2}\right] \\+ \left( \mathrm{Bar}_x(R)+ \frac{a+ (B'-X)\varepsilon}{2}, \mathrm{Bar}_y(R)+ \frac{B' \varepsilon}{2}\right) 
\end{multline}
and
\begin{equation}
 \dot{B}:= \left[- \frac{B' \varepsilon}{2}, \frac{B' \varepsilon}{2}\right] \times \left[- \frac{B' \varepsilon}{2}, \frac{B' \varepsilon}{2}\right] + \left(\mathrm{Bar}_x +\frac{a+3 B' \varepsilon -2X \varepsilon}{2}, \mathrm{Bar}_y+ \frac{B' \varepsilon}{2}\right).
\end{equation}
For the first term we have that
\begin{equation}\label{15052023pom6}
	4 \int_{\overline{B}}d_{\infty}^{\varepsilon}(z, \partial QR)dz= 4 \varepsilon (X-B') \sum_{j=1}^{B'} \varepsilon^2 j= 2 (X-B')\varepsilon^3 B' (B'+1).
\end{equation}
For the second term, by a change of variables, it holds true that
\begin{equation}\label{15052023pom7}
	\int_{\dot{B}}d_{\infty}^{\varepsilon}(z, \partial QR)dz  = \int_{\dot{H}}d_{\infty}^{\varepsilon}(z, \dot{L})dz  
\end{equation}
where
\begin{equation}
	\dot{H}:= \left[0, B' \varepsilon\right] \times \left[0, B' \varepsilon\right] , \quad \dot{L}:= \left[0, B' \varepsilon\right] \times \left\{B' \varepsilon \right\} \cup \left\{B' \varepsilon \right\} \times \left[0, B' \varepsilon\right].
\end{equation}
We can further decompose the integral as follows
\begin{equation}\label{15052023pom8}
	\int_{\dot{H}}d_{\infty}^{\varepsilon}(z, \partial QR)dz  = \int_{\dot{H}'}d_{\infty}^{\varepsilon}(z, L)dz  + \int_{\dot{H}''}d_{\infty}^{\varepsilon}(z, L)dz  
\end{equation}
where
\begin{equation}
	\dot{H}':= \bigcup_{i=1}^{B'} \bigcup_{j=1}^{B'+1-j}Q_{\varepsilon}(\varepsilon(i,j)), \quad \dot{H}'':= \bigcup_{j=1}^{B'} \bigcup_{i=1}^{B'-j}Q_{\varepsilon}(\varepsilon(i,j)).
\end{equation}
We eventually obtain
\begin{align}\label{15052023pom9}
	& 4 \int_{\dot{H}'}d_{\infty}^{\varepsilon}(z, L)dz  + 4 \int_{\dot{H}''}d_{\infty}^{\varepsilon}(z, L)dz   =4 \sum_{i=1}^{B'} \sum_{j=1}^{B'-j+1} j \varepsilon^3 + 4 \sum_{j=1}^{B'} \sum_{i=1}^{b'-j}i \varepsilon^3 \\
	&= 4 \varepsilon^3 \sum_{k=1}^{B'} \left(2 \sum_{h=1}^{B'-k} h + B'-k +1\right)= \frac{2}{3} \varepsilon^3 \left(2 \left(B'\right)^2+3 B'+1\right)B'. 
\end{align}
Gathering together \eqref{15052023pom5},\eqref{15052023pom6}, \eqref{15052023pom7}, \eqref{15052023pom8}, \eqref{15052023pom9} we have
\begin{equation}\label{15052023sera2}
	\mathcal{D}_B=  \frac{b}{3} \left(2 \left(\frac{b}{2}\right)^2+ 3 \varepsilon \frac{b}{2}+ \varepsilon^2\right)+b(\frac{b}{2}+ \varepsilon)(\varepsilon X- \frac{b}{2}).
\end{equation}
We consider the second case, that is when $B=2B'+1$ with $B,B' \in \N$. By the symmetry of the quasi rectangle, we can write
\begin{equation}
	\begin{split}
	\mathcal{D}_B:=&	2\int_{B} d_{\infty}^{\varepsilon}(z, \partial QR)dz= 4 \int_{\ddot{B}}d_{\infty}^{\varepsilon}(z, \partial QR)dz
	 \\
	& +  4 \int_{\breve{B}}d_{\infty}^{\varepsilon}(z, \partial QR)dz + 2 \int_{V}d_{\infty}^{\varepsilon}(z, \partial QR)dz +2 \int_{W}d_{\infty}^{\varepsilon}(z, \partial QR)dz
\end{split}
\end{equation}
where
\begin{multline}
	\ddot{B}:= \left[-\frac{\varepsilon(X-B')}{2},\frac{\varepsilon(X-B')}{2}\right]\times \left[- \frac{\varepsilon B'}{2},\frac{\varepsilon B'}{2}\right]\\+\left(\mathrm{Bar}_x(R)+ \frac{a+X\varepsilon-\varepsilon B'}{2}, \mathrm{Bar}_y(R)+ \frac{\varepsilon+ \varepsilon B'}{2}\right)
\end{multline}
and
\begin{align}
	& \breve{B}:=\left[- \frac{\varepsilon B'}{2},\frac{\varepsilon B'}{2}\right] \times \left[- \frac{\varepsilon B'}{2},\frac{\varepsilon B'}{2}\right]+ \left(\mathrm{Bar}_x(R)+\frac{a+\varepsilon B'+2 \varepsilon X}{2}, \mathrm{Bar}_y(R)+ \frac{\varepsilon+ \varepsilon B'}{2}\right),\\
	& V:=\left[-\frac{\varepsilon(X-B')}{2},\frac{\varepsilon(X-B')}{2}\right]\times \left[- \frac{\varepsilon }{2},\frac{\varepsilon }{2}\right]+\left(\mathrm{Bar}_x(R)+ \frac{a+X\varepsilon-\varepsilon B'}{2}, \mathrm{Bar}_y(R)\right),\\
	& W:= \left[-\frac{\varepsilon B'}{2},\frac{\varepsilon B'}{2}\right]\times \left[- \frac{\varepsilon }{2},\frac{\varepsilon }{2}\right]+\left(\mathrm{Bar}_x(R)+ \frac{a+\varepsilon B'+2\varepsilon X}{2}, \mathrm{Bar}_y(R)\right) .
\end{align}
Arguing as before we show that
\begin{multline}
	\mathcal{D}_B= \frac{2}{3} \varepsilon^3 \left(2 \left(B'\right)^2+ 3 B'+1\right)B'\\+ 2\left(X-B'\right)\varepsilon^3 B' \left(B'+1\right)+ \varepsilon^3 B' \left(B'+1\right)+\left(X-B'\right)\left(B'+1\right)\varepsilon^3.
\end{multline}
Recalling that $ \varepsilon B' = \frac{b-\varepsilon}{2}$ we eventually obtain
\begin{equation}\label{15052023sera1}
	\begin{split}
	\mathcal{D}_B=& \frac{2}{3}  \left(2 \left(\frac{b-\varepsilon}{2}\right)^2+ 3 \varepsilon \frac{b-\varepsilon}{2}+\varepsilon^2\right)\frac{b-\varepsilon}{2}+ 2\left(X\varepsilon- \frac{b-\varepsilon}{2}\right) \frac{b-\varepsilon}{2} \left(\frac{b-\varepsilon}{2}+\varepsilon\right)\\
	&+ \varepsilon  \frac{b-\varepsilon}{2} \left(\frac{b-\varepsilon}{2}+\varepsilon\right)+\left(X\varepsilon-\frac{b-\varepsilon}{2}\right)\left(\frac{b-\varepsilon}{2}+\varepsilon\right)\varepsilon.
	\end{split}
\end{equation}
In order to conveniently write the energy functional we introduce the notation
$$ x:= \varepsilon X ,\quad y:= \varepsilon Y$$
and we rewrite the area constraint as $ y=\frac{c \varepsilon -\varepsilon d +2xb}{2a-4x}$. We are now in position to write the energy \eqref{15052023energia} as a function of $x$ and $d$. We still need to distinguish the two previous cases.

\textit{Case $x <\frac{b}{2}$ } By \eqref{perdiscTHM}, \eqref{15052023sera7}, \eqref{15052023sera6}, \eqref{15052023sera5}, \eqref{15052023sera4}, \eqref{15052023sera3} the dissipation part of the energy is
\begin{equation}
	\begin{split}
	\mathcal{D}(x,y,d):= \,&\mathcal{D}_{\varepsilon}(\widetilde{QR},QR)= \frac{2}{3}x (2x^2+3 \varepsilon x + \varepsilon^2)+(b-2x)x(x+\varepsilon) +(a-2x)\frac{y(y+\varepsilon)}{2} \\&+ \frac{d (y+\varepsilon)y}{2}+ \frac{y(y-\varepsilon)(c-d)}{2}+\varepsilon y (y+ \varepsilon)+ \frac{y(y+\varepsilon)}{2}(a-2x-c-2\varepsilon).
	\end{split}
\end{equation}
The energy functional becomes
\begin{equation}\label{28052023pom4}
	\begin{split}
	E_\varepsilon(x,y,d)=\,& \mathcal{D}(x,y,d)+ \varepsilon \alpha \left( -4x+4y\right) +R(d) 	= a \varepsilon y + a y^2  \\
	&-4 \alpha \varepsilon x+ 4 \varepsilon \alpha y + b \varepsilon x+ b x^2  -c \varepsilon y + d \varepsilon y + \frac{ 2 \varepsilon^2 x}{3}- 2 \varepsilon x y - \frac{ 2}{3} x^3 - 2 x y^2,
\end{split}
\end{equation}  
where $ R(d):= \,2a + 2b+ 2 \varepsilon \chi_{(0,+\infty)}(d)$. Now by the area constraint $ y=y(x)=\frac{c \varepsilon -\varepsilon d +2xb}{2a-4x}$ we obtain that 
\begin{equation}\label{28052023pom5}
	\begin{split}
E_\varepsilon(x,y(x),d):=\,& \frac{1}{12a-24x}\big(16x^4+(-24b-8a)x^3 \\ & +(-16\varepsilon^2+(96\alpha-48b)\varepsilon+12b^2+12ab)x^2\\
&+((12d-12c+8a)\varepsilon^2+((24a+48A)b-48\alpha a)\varepsilon)x \\
&+(-3d^2+(6c-6a-24\alpha)d-3c^2+(6a+24\alpha)c)\varepsilon^2 \big).
	\end{split}
\end{equation}
In what follows we are going to find the minimizers of our energy above, originally defined in the discrete setting, analyzing its extension in the continuum, that is for $x$ and $d$ real variables. We have that
\begin{equation}\label{parderxE}
	\begin{split}
		\frac{\partial E_\varepsilon(x,y(x),d)}{\partial x}=&\frac{-1}{(24x^2-24ax+6a^2)}\big(
		48x^4+(-48b-48a)x^3 \\&+(-16\varepsilon^2+(96\alpha-48b)\varepsilon+12b^2+48ab+12a^2)x^2 \\
		& +(16a\varepsilon^2+(48ab-96\alpha a)\varepsilon-12ab^2-12a^2 b)x \\
		& +(3d^2+(24\alpha-6c)d+3c^2-2\alpha c-4a^2)\varepsilon^2+((-12a^2-24\alpha a)b+24\alpha a^2)\varepsilon \big).
	\end{split}
\end{equation}
\textit{Case $x \geq \frac{b}{2}$ } Also in this step we consider the energy as defined on real numbers. We need to distinguish two sub-cases depending wether $\frac{b}{\varepsilon} \equiv_2 0$ or not. If $\frac{b}{\varepsilon} \equiv_2 0$, thanks to
\eqref{perdiscTHM}, \eqref{15052023sera7}, \eqref{15052023sera6}, \eqref{15052023sera5}, \eqref{15052023sera4}, \eqref{15052023sera2}, we have that
\begin{equation}\label{20052023sera1} 
	\begin{split}
		E_\varepsilon(x,y(x),d)=& \frac{1}{12a-24x} \big(\varepsilon x^2(48b-96\alpha)\\
		&+x((12c -12 d+8b)\varepsilon^2+((-24a-48\alpha)b+48\alpha a) \varepsilon -2b^3 -6 a b^2) \\
		&  + \varepsilon^2 (3 d^2+ (-6c+6a+24\alpha)d+ 3c^2+c(-6a-24 \alpha)-4ab) +ab^3\big)
	\end{split}
\end{equation} 
and 
\begin{equation}\label{20052023sera3}
	\begin{split}
		\frac{\partial E_\varepsilon(x,y(x),d)}{\partial x}=& \frac{1}{8 x^2 -8a x+ 2 a^2} \big((16 b-32 \alpha)\varepsilon x^2+ (32\alpha a-16 ab)\varepsilon x\\
		&+(-d^2+(2c-8 \alpha)d-c^2+8 \alpha c)\varepsilon^2  +((4a^2+8 \alpha a)b-8 \alpha a^2)\varepsilon +a^2 b^2\big).
	\end{split}
\end{equation}
If instead we are in the case $ \frac{b}{\varepsilon} \equiv_{2} 1$ we can use 
\eqref{perdiscTHM}, \eqref{15052023sera7}, \eqref{15052023sera6}, \eqref{15052023sera5}, \eqref{15052023sera4}, \eqref{15052023sera1} to write
\begin{equation}\label{20052023sera2}
	\begin{split}
		E_\varepsilon(x,y(x),d)=& \frac{1}{24 x-12 a} \big(x^2(12 \varepsilon^2+(48b-96\alpha)\varepsilon) \\& + 
		 x((-12d+12c+2b-6a)\varepsilon^2 +((-24a-48\alpha)b+48\alpha a)\varepsilon-2b^3+-6ab^2)\\
		 &+ \varepsilon^2(3d^2+(-6c+6a+24\alpha)d+3c^2+(-6a-24\alpha)c-ab)+ab^3\big)
	\end{split}
\end{equation} 
and
\begin{equation}\label{20052023sera4}
	\begin{split}
		\frac{\partial E_\varepsilon(x,y(x),d)}{\partial x}=& \frac{1}{8x^2-8ax+2a^2}\big(x^2(4\varepsilon^2 +(16b-32\alpha)\varepsilon)\\
		&+ (-4\varepsilon^2+(32\alpha a-16 ab)\varepsilon)x+(-d^2+(2c-8\alpha)d-c^2 +8\alpha c+a^2)\varepsilon^2 \\
		& +((4a^2 +8 \alpha a)b-8\alpha a^2)\varepsilon +a^2 b^2\big).
	\end{split}
\end{equation}
\\
\textit{Step 2)}\\
In this step we compute the minimzers and the minimal values of the function $ E_\varepsilon $ under the 
area constraint assuming that $x$ and $d$ are real variables. We distinguish two cases. \\

 \textit{Case $x \in [\frac{b}{2},\frac{a}{2})$ } In this case we have to consider the two energies in \eqref{20052023sera1} and \eqref{20052023sera2}. We claim that there exists $\varepsilon_1:= \varepsilon_1 (\Lambda,\alpha)$ such that for all $ \varepsilon< \varepsilon_1$
 \begin{equation}
 	\frac{\partial E_\varepsilon(x,y(x),d)}{\partial x}\geq0
 \end{equation}
for all $0\leq d < a$ fixed.
We detail the argument only in the case of formula \eqref{20052023sera3}, the other case being analogous. To this end we observe that the denominator of \eqref{20052023sera3} is positive. Hence the claim is proved provided one can show the positivity of the numerator in \eqref{20052023sera3}. The latter follows by the following chain of inequalities
\begin{equation}
	\begin{split}
	&(16 b-32 \alpha)\varepsilon x^2+ (32\alpha a-16 ab)\varepsilon x+(-d^2+(2c-8 \alpha)d-c^2+8 \alpha c)\varepsilon^2  \\
	&+((4a^2+8 \alpha a)b-8 \alpha a^2)\varepsilon +a^2 b^2> \frac{a^2b^2}{2}=\frac{1+\varepsilon^2 c^2 -2 \varepsilon c}{2}> \frac{1}{4}>0
\end{split}
\end{equation} 
which holds for $ \varepsilon< \varepsilon_1:= (\Lambda+\alpha)^{-k_1}$ with $k_1 \in \N$ sufficiently large and uses the assumption $ab+\e c=1$.
By the previous argument the only minimizer of $x \rightarrow E_\varepsilon(x,y(x),d)$ for $ x \in \left[\frac{b}{2},\frac{a}{2}\right) $ and $ a > d\geq0$ fixed is $ x= \frac{b}{2}$, where the energy takes the value 
\begin{equation}\label{28052023pom2}
	E_\varepsilon\left(\frac{b}{2}, y\left(\frac{b}{2}\right),d\right)=\frac{ab^3+b^4 +O(\varepsilon)}{12(a-b)}>0
\end{equation}
for all $ \varepsilon< \varepsilon_1$. The latter estimate can be extended to the function $(x,d) \rightarrow E_\varepsilon(x,y(x),d)$ thanks to the following observation. In formulas \eqref{20052023sera1}, \eqref{20052023sera2} the quantities $d$ and $d^2$ have $\varepsilon^2$ as a prefactor. Therefore it is possible to  write  $E_\varepsilon(x,y(x),d)= d \varepsilon^2 g(x)+ d^2 \varepsilon^2 h(x)+ k(x)$ with $g,h,k$ being rational functions of $x$. Note that the functions $g$ and $h$ have coefficients depending on $a,\,b,\,c$ only, while the function $k$ has coefficients depending on $a,\,b,\,c$ and $\varepsilon$. As a result there exists $ \varepsilon_2:= \varepsilon_2(\Lambda,\alpha)<\varepsilon_1$ such that for all $ \varepsilon < \varepsilon_2$ the minimal value of the energy $(x,d) \rightarrow E_\varepsilon(x,y(x),d)$ can be written as  $E_\varepsilon\left(\frac{b}{2}, y\left(\frac{b}{2}\right),d\right)=\frac{ab^3+b^4 +O(\varepsilon)}{12(a-b)}$.  \\

\textit{Case $x \in [0,\frac{b}{2})$ } We again look for stationary points of the energy $x \rightarrow E_\varepsilon(x, y(x),d)$ for fixed $ 0 \leq d < a$. To this end we investigate the zeros of the numerator of \eqref{parderxE} that we denote by
\begin{equation}
	\begin{split}
	f(\varepsilon,x):=& -\big(
	48x^4+(-48b-48a)x^3 \\&+(-16\varepsilon^2+(96\alpha-48b)\varepsilon+12b^2+48ab+12a^2)x^2 \\
	& +(16a\varepsilon^2+(48ab-96\alpha a)\varepsilon-12ab^2-12a^2 b)x \\
	& +(3d^2+(24\alpha-6c)d+3c^2-2\alpha c-4a^2)\varepsilon^2+((-12a^2-24\alpha a)b+24\alpha a^2)\varepsilon \big).
	 	\end{split}
\end{equation}
We observe that there exists $C(\Lambda,\alpha)>0$ such that
\begin{equation}\label{20052023pom1}
	\max_{x \in [0,\frac{b}{2}]} \vert f(\varepsilon,x)- f(0,x) \vert \leq \varepsilon C(\Lambda,\alpha)
\end{equation}
where 
\begin{equation}
	f(0,x)= -12x \left( -ab^2 -a^2 b +x(a^2+4ab+b^2)+(-4b-4a)x^2+4x^3 \right).
\end{equation}
Thanks to \eqref{20052023pom1} we find out that there exists $\varepsilon_3:=\varepsilon_3(\Lambda,\alpha) < \varepsilon_2$ such that for all $ \varepsilon< \varepsilon_3$ the points $(\varepsilon,x)$ such that $f(\varepsilon,x)=0$ lie in a neighbourhood of a zero of $f(0,x)$. For this reason we investigate the roots of the polynomial $f(0,x)$.
We have that
$$ f(0,x)= -12x p(x), \,\, p(x):=-ab^2 -a^2 b +x(a^2+4ab+b^2)+(-4b-4a)x^2+4x^3.$$
We observe that for all $x \in [0,\frac{b}{2}]$
$$ \frac{d}{d x}\,p(x)= 12x^2 +(-8b+8a)x+a^2+4ab+b^2>0.$$
Therefore, since $ p(\frac{b}{2})$ is negative we have that $ p(x)<0$ for all $ x \in [0,\frac{b}{2}]$. Hence the only root of $f(0,x)$ in $[0,\frac{b}{2}]$ is $x=0$ and moreover $ f(0,x)\geq0$ for all $ x \in [0,\frac{b}{2}]$. In order to obtain information on the zeros of $f(\e,x)$ from the zeros of $f(0,x)$ we will make use of the implicit function theorem. We first compute 
\begin{equation}\label{28052023pom3}
	\begin{split}
	\frac{\partial f}{\partial x}(\varepsilon,x)=&
	-192x^3+(144b+144a)x^2\\&+(32\varepsilon^2+(96b-192\alpha)-24b^2-96ab-24a^2)x\\&-16a\varepsilon^2+(96\alpha a-48ab)\varepsilon+12ab^2+12a^2b.
\end{split}
\end{equation}
From the expression above we obtain that
\begin{equation}\label{20052023pom2}
	\frac{\partial f}{\partial x}(0,0)= 12(a^2b+ab^2)>24>0
\end{equation}
where we have used that $ ab=1$ and $ a+b>2$. Moreover there exists $ C(\Lambda,\alpha)>0$ such that
\begin{equation}\label{20052023pom3}
	\max_{x \in [0,\frac{b}{2}]} \left| \frac{\partial f}{\partial x}(\varepsilon,x)- \frac{\partial f}{\partial x}(0,x)\right| \leq \varepsilon C(\Lambda,\alpha).
\end{equation}
As consequence of \eqref{20052023pom2}, \eqref{20052023pom3} there exists $ \varepsilon_4:=\varepsilon_4(\Lambda,\alpha)< \varepsilon_3$ such that for all $ \varepsilon<\varepsilon_4$ 
$$ \frac{ \partial f}{\partial x}(\varepsilon,x)>0 \text{ for all } \varepsilon< \varepsilon_4 \text{ and for all $ x \in (-\varepsilon_4,\varepsilon_4)$}.$$
We are now in a position to apply the implicit function theorem to obtain that for every $\alpha,a,b>0$ and $a+b< \Lambda$ there exists a smooth function $\bar{x}: (-\frac{\varepsilon_4}{2},\frac{\varepsilon_4}{2}) \rightarrow \R$ such that $ f(\varepsilon, \bar{x}(\varepsilon))=0$ and $ \bar x(0)=0$. As a consequence we have that for all $ \varepsilon<\varepsilon_4$
$$ f(\varepsilon,x) <0 \text{ for all }x\in (0,\bar{x}(\varepsilon)) \text{ and } f(\varepsilon,x)>0 \text{ for all } x \in \left(\bar{x}(\varepsilon), \frac{b}{2}\right).$$
Hence we obtain that the minimizer of $ x \rightarrow E_\varepsilon(x,y(x),d)$ is either $\bar{x}(\varepsilon)$ or $ \frac{b}{2}$. Now we want to compare the energy $E_\e$ at the points $(\bar x(\e),y(\bar x(\e), d)$ and $(\frac{b}{2}, y(\frac{b}{2}),d)$. To obtain a uniform estimate depending only on $\Lambda$ and $\alpha$ we first need to investigate the function $\varepsilon \rightarrow \bar{x}(\varepsilon)$ in a neighborhood of the origin. To this end we compute the first coefficient of its Maclaurin expansion. We can write that $\bar{x}(\varepsilon)= x' \varepsilon +o(\varepsilon)$ where $x'$ is obtained observing that, by \eqref{parderxE},
\begin{equation}
	\frac{\partial E_\varepsilon(\bar{x}(\varepsilon),y(\bar{x}(\varepsilon)),d)}{\partial x}=0 \,\iff\, o(\varepsilon)+ x' \varepsilon (12ab^2+12a^2b)+\varepsilon((12a^2+24 \alpha a)b-24\alpha a^2)=0,
\end{equation}
hence
\begin{equation}\label{280520223pom1}
	x'=\frac{2 \alpha(a-b)}{b(a+b)}- \frac{a}{a+b}= \lim_{\varepsilon \rightarrow 0^{+}}\frac{2 \alpha(a-b)}{b(a+b)}- \frac{a}{a+b}+ \frac{o(\varepsilon)}{\varepsilon (12ab^2+12a^2b)}.
\end{equation}
From the formula above we have that $\vert x' \vert \leq C(\Lambda,\alpha)$.
Computing the first and the second derivative of $\varepsilon \rightarrow \bar{x}(\varepsilon)$  we have that there exists $\varepsilon_5:= \varepsilon_5(\Lambda,\alpha)< \varepsilon_4$ such that for all $\varepsilon < \varepsilon_5$
\begin{equation*}
	\left| \frac{d}{d \varepsilon} \bar{x}(\varepsilon) \right| = \left| \frac{\frac{\partial f}{\partial \varepsilon}(\varepsilon,\bar{x}(\varepsilon))}{\frac{\partial f}{\partial x}(\varepsilon,\bar{x}(\varepsilon))}\right| \leq  C(\Lambda, \alpha)
\end{equation*}
and
\begin{equation}
	\left| \frac{d^2}{d \varepsilon^2} \bar{x}(\varepsilon) \right| = \left| \frac{-\frac{d}{d \varepsilon}\bar{x}(\varepsilon) \frac{\partial^2 f}{\partial x^2} (\varepsilon,\bar{x}(\varepsilon))+\frac{\partial^2 f}{\partial \varepsilon^2} (\varepsilon,\bar{x}(\varepsilon)) }{\frac{\partial f}{\partial x}(\varepsilon,\bar{x}(\varepsilon))} \right| \leq C(\Lambda,\alpha)
\end{equation} 
where we have used the formula \eqref{20052023pom3} to say that $ \left|\frac{\partial f}{\partial x}(\varepsilon,\bar x (\varepsilon))\right|>C(\Lambda,\alpha)$.
As a result we eventually have that $\bar x (\varepsilon)= x' \varepsilon + O(\varepsilon)$.
We now check that for $\e$ small enough the only minimizer of $ E_{\varepsilon}(x,y(x),d)$ is $\bar{x}(\varepsilon)$ (and not $ \frac{b}{2}$). Indeed, thanks to \eqref{28052023pom2}, we have that there exists $\varepsilon_6:= \varepsilon_6(\Lambda,\alpha)<\varepsilon_5$ such that for all $\varepsilon < \varepsilon_6$
\begin{equation}
	\begin{split}
	E_{\varepsilon}(\bar{x}(\varepsilon),y(\bar{x}(\varepsilon)),d)= & \frac{O(\varepsilon)}{12a+O(\varepsilon)} \\
	&< \frac{ab^3+b^4 }{12(a+b)} < \frac{ab^3+b^4 +O(\varepsilon)}{12(a-b)}=E_{\varepsilon}\left(\frac{b}{2}, y\left(\frac{b}{2}\right),d\right).
\end{split}
\end{equation}
Arguing as at the end of  \textit{Case $x \in [\frac{b}{2},\frac{a}{2})$}, one can find $ \varepsilon_7:= \varepsilon_7(\Lambda,\alpha)< \varepsilon_6$ such that for all $ \varepsilon < \varepsilon_7$ the minimal value of the energy $(x,d) \rightarrow E_\varepsilon(x,y(x),d)$ is  $E_{\varepsilon}(\bar{x}(\varepsilon),y(\bar{x}(\varepsilon)),d)=  \frac{O(\varepsilon)}{12a+O(\varepsilon)} $. \\

\textit{Step 3) } This last step is devoted to the computation of the minimizers of the energy \eqref{15052023energia} in $\mathcal{SQR}_{\varepsilon}(QR)$. Thanks to step 1) we know that this is equivalent to minimize the function
\begin{equation}\label{28052023sera1}
(X,Y,D) \rightarrow E_{\varepsilon}(\varepsilon X, \varepsilon Y, \varepsilon D)
\end{equation} 
where $X,Y,D \in \N$. 
By Step 2) we have that
$$E_{\varepsilon}(\bar x(\varepsilon), y(\bar x(\varepsilon)),d) \leq   E_\varepsilon \Big(\varepsilon X, \frac{c \varepsilon -d \varepsilon+ 2 \varepsilon
 b}{ 2a -4 \varepsilon X},d\Big)$$
for all $X \in \N $ such that $\varepsilon X \in (0,a)$, $\varepsilon Y=  \frac{c \varepsilon -d \varepsilon+ 2 \varepsilon
	b}{ 2a -4 \varepsilon X}$ with $Y \in \N$ and $d = \varepsilon D \leq a$ with $D \in \N$. 
 In step 2) we have proved that the function $ x \rightarrow E_\varepsilon (x,y(x),d)$ is monotone decreasing in $(0,\bar x(\varepsilon))$ and monotone increasing in $ (\bar x(\varepsilon),a)$. Hence the only possible values of $X \in \N$ that can minimize $E_\varepsilon$ are
 $ \lfloor x' \rfloor \text{,} \lceil x' \rceil $, where $x'$ is defined in \eqref{280520223pom1}. For such given $X$ we can exploit the definition of quasi rectangle to determine the values of $Y$ and $d$ which minimize the energy. We recall that, given a quasirectangle $QR=R\cup Q$, the horizontal sidelength of $Q$ is the reminder in the Euclidean division of $|QR|/\e^2$ (the number of lattice points in $QR$) by the number of lattice points in the horizontal sidelength of $R$. In our case this description corresponds to consider as optimal $Y \in \N$ associated to a minimal $X$, those values  for which the remainder $d= \frac{1}{\varepsilon} \left[ c \varepsilon +  2 X \varepsilon b - \varepsilon Y (2a -4 \varepsilon X) \right]$ takes the minimal value, namely
 \begin{equation}\label{28052023sera2}
 	Y_1 \in \arg \min  \left\{ \frac{1}{\varepsilon} \left( c \varepsilon +  2 \lfloor x' \rfloor  \varepsilon b - \varepsilon Y (2a -4 \varepsilon \lfloor x' \rfloor ) \right), \,  \varepsilon Y \in (0,+\infty)    \right\}
 \end{equation}
 and
 \begin{equation}\label{28052023sera3}
 	Y_2 \in \arg \min  \left\{ \frac{1}{\varepsilon} \left( c \varepsilon +  2 \lceil x' \rceil \varepsilon b - \varepsilon Y (2a -4 \varepsilon \lceil x' \rceil) \right), \,  \varepsilon Y \in (0,+\infty)     \right\}.
 \end{equation}
  We denote the candidate minimzers as
 \begin{equation}\label{03062023elenco1}
 	(\lfloor x' \rfloor , Y_1, D_1),\, (\lceil x' \rceil, Y_2, D_2) \in \N^3
 \end{equation} 
 where $d_1= \varepsilon D_1$ is the minimum value of \eqref{28052023sera2} and $d_2 = \varepsilon D_2$ is the minimum value of \eqref{28052023sera3}.
   By \eqref{28052023sera2} and \eqref{28052023sera3} we can also write 
  \begin{equation}\label{03062023elenco2}
  	Y_1 = \bigg\lfloor \frac{ c  + 2  b \lfloor x' \rfloor }{ 2 a -4 \varepsilon \lfloor x' \rfloor } \bigg\rfloor, Y_2 = \bigg\lfloor \frac{ c  + 2  b \lceil
  		 x' \rceil }{ 2 a -4 \varepsilon \lceil x' \rceil } \bigg\rfloor. 
  \end{equation} 
Exploiting again the area constraint we can also write for $ i =1,2$
 \begin{equation}\label{03062023elenco3}
 	D_i= \frac{1}{\varepsilon^2 } \left[ C \varepsilon^2 + 2 X_i \varepsilon^2 B - \varepsilon Y_i (2 \varepsilon A-4 \varepsilon X_i)\right]= C + 2X_i B - Y_i (2A -4 X_i)
 \end{equation} 
  where $ X_1:= \lfloor x' \rfloor $ and $ X_2 := \lceil x' \rceil$, $c=C \varepsilon $, $ a= \varepsilon A$ and $ b = \varepsilon B$ with $ A,\, B, \, C \in \N$ as in the assumptions of the theorem. We recall that in Step 3) we have proved that $\vert x' \vert \leq C(\Lambda,\alpha)$. As a result we have that $  Y_i   \leq C(\Lambda,\alpha)$ for $i=1,2$. Finally we obtain that 
  the minimizer $QR'=R' \cup Q'$ is such that $R'= \left[-\frac{a'}{2}, \frac{a'}{2}\right] \times \left[-\frac{b'}{2}, \frac{b'}{2}\right]+ \mathrm{Bar}(R)$ and
  \begin{equation}
  	\begin{split}
  	& a'= a-2X_i \varepsilon \text{ with } X_1= \lfloor x' \rfloor \text{ or } X_2=\lceil x' \rceil   \text{ and } x'=\frac{2\alpha(-b+a)}{b(a+b)}-\frac{a}{a+b},\\
  	&  b'= b+2 Y_i \varepsilon  \text{ with }  Y_1 = \bigg\lfloor \frac{ c  + 2  b \lfloor x' \rfloor }{ 2 a -4 \varepsilon \lfloor x' \rfloor }  \bigg\rfloor \text{ or } Y_2 = \bigg\lfloor \frac{ c  + 2  b \lceil
  		x' \rceil }{ 2 a -4 \varepsilon \lceil x' \rceil } \bigg\rfloor, \\
  	& \text{the horizontal sidelength of $Q'$ is } \varepsilon D_i=  \varepsilon(C + 2X_i B  + Y_i (2A -4 X_i)) \text{ for $i=1,2$}, \\
  	&   \vert a'-a \vert= \vert 2 X_i \varepsilon \vert \leq \varepsilon C(\Lambda,\alpha),\, \vert b'-b \vert = \vert 2 Y_i \varepsilon \vert  \leq \varepsilon C(\Lambda,\alpha) ,
  	\\
  	&  a'+b'\leq a+b ,\, \vert QR \Delta QR' \vert \leq  2bX_i \varepsilon+ 2(a-2X_i \varepsilon)Y_i \varepsilon \leq \varepsilon C(\Lambda,\alpha).
  \end{split}
  \end{equation}
   
\end{proof}
For all $ QR, \widetilde{QR} \in \mathcal{QR}_\varepsilon $ we define the energy 
$$ \mathcal{F}_{\varepsilon}(\widetilde{QR},QR):=\mathrm{P}(\widetilde{QR})+ \frac{1}{\alpha \varepsilon}D_{\varepsilon}(\widetilde{QR},\, QR). $$
\begin{definition}\label{def5.5}
	Let $\{QR_\varepsilon\}_{\varepsilon \in (0,1)} \subset \mathcal{QR}_{\varepsilon}$ have pseudo-axial symmetry, $\vert QR_\varepsilon \vert =1$ and such that $ QR_\varepsilon \rightarrow \left[-\frac{a}{2}, \frac{a}{2}\right] \times \left[-\frac{b}{2}, \frac{b}{2}\right]$ as $\varepsilon \rightarrow 0$ with respect to the Hausdorff distance. For all $\varepsilon\in(0,1)$ let $\{ QR_{k}^{\varepsilon}\}_{k \in \mathbb{N}}$ be a family of sets defined iteratively as
	$$ QR_0^{\varepsilon}=QR_\varepsilon \quad \text{and} \quad QR_{k }^{\varepsilon} \in \underset{QR \in \mathcal{SQR}_{\varepsilon}(QR_{k-1}^{\varepsilon})}{\arg \min} \left\{\mathcal{F}_{\varepsilon}(QR,\,QR_{k-1}^{\varepsilon})\right\} \quad k\geq 1. $$ 
	We define 
	$$ QR_{t}^{\varepsilon}:= QR_{k }^{\varepsilon} \quad \text{for any } t \in [k \alpha\varepsilon,\, (k+1)\alpha\varepsilon).$$
For all $\varepsilon$ we call $\{QR_t^{\varepsilon}\}_{t \geq 0}$ a symmetric approximate flat solution of the area-preserving  mean-curvature flow in the lattice $\varepsilon\mathbb{Z}^2$ with initial datum $QR_{\varepsilon}$. 
\end{definition}
\begin{theorem}\label{theorem:approxflat-eps}
	Let $R$ be a rectangle with horizontal sidelength $a_0$ and vertical sidelength $b_0$ and such that $a_0 b_0=1$ and $b_0 < a_0$ and $a_0+b_0<\Lambda$.  For any $ 0<\varepsilon< \varepsilon_0$ (where $\varepsilon_0$ is the number obtained in the Theorem \ref{THMesistenzadeiminimidis} for $QR= QR_{\varepsilon}$), let $ \{QR_t^{\varepsilon}\}_{t\geq 0}$ be a symmetric approximate flat
	solution to the area-preserving mean-curvature flow in the lattice $\varepsilon \mathbb{Z}^2$ with initial datum $QR_{\varepsilon}$ defined according to Definition \eqref{def5.5}.
	Then for all $k\geq 0$ we have $ QR_{t}^{\varepsilon}= R_{t}^{\varepsilon} \cup Q_{t}^{\varepsilon}$ where
$ R_t^{\varepsilon}= \left[-\frac{a^{\varepsilon}(k)}{2}, \frac{a^{\varepsilon}(k)}{2}\right] \times \left[-\frac{b^{\varepsilon}(k)}{2}, \frac{b^{\varepsilon}(k)}{2}\right] + \mathrm{Bar}(R^{\varepsilon})$ for all $ t \in [k\alpha\varepsilon,(k+1)\alpha\varepsilon) $.
	Here $a^{\varepsilon}(k)$ and $ b^{\varepsilon}(k)$ and are obtained by recurrence starting from $a^\varepsilon(0), b^\varepsilon(0)$ being the horizontal sidelength and the vertical sidelength of $R_{\varepsilon}$, respectively. We denote by $c^\varepsilon(k)$ the horizontal sidelength of $Q_t^{\varepsilon}$ for all $ t \in [k\alpha\varepsilon,(k+1)\alpha\varepsilon) $. Setting $c^\varepsilon(0)$ to be the horizontal sidelength of $Q_{\varepsilon}$ it holds true that, for all $k\geq 0$
\begin{equation}\label{03062023sera1}
	\begin{split}
		& X_1^\varepsilon(k)= \lfloor x^{\varepsilon}(k) \rfloor \text{, } X_2^{\varepsilon}(k)=\lceil x^{\varepsilon}(k) \rceil   \\
		& x^\varepsilon(k)=\frac{2\alpha(-b^{\varepsilon}(k)+a^{\varepsilon}(k))}{b^{\varepsilon}(k)(a^\varepsilon(k)+b^\varepsilon(k))}-\frac{a^\varepsilon(k)}{a^\varepsilon(k)+b^\varepsilon(k)},\\
		& 
		Y_1^\varepsilon(k) = \bigg\lfloor \frac{ c^\varepsilon(k)  + 2  b^{\varepsilon}(k) \lfloor x^{\varepsilon}(k) \rfloor }{ 2 a^\varepsilon(k) -4 \varepsilon \lfloor x^{\varepsilon}(k) \rfloor } \bigg\rfloor, Y_2^\varepsilon(k) = \bigg\lfloor \frac{ c^\varepsilon(k)  + 2  b^\varepsilon(k) \lceil
			x^{\varepsilon}(k) \rceil }{ 2 a^\varepsilon(k) -4 \varepsilon \lceil x^{\varepsilon}(k) \rceil } \bigg\rfloor, \\
		&
		a^\varepsilon(k+1)\in \{a^\varepsilon(k)-2\e X_1^{\varepsilon}(k),a^\varepsilon(k)-2\e X_2^{\varepsilon}(k) \}, \\
		& b^\varepsilon(k+1)\in \{ b^\varepsilon(k)+2 Y_1^{\varepsilon}(k) \varepsilon, b^\varepsilon(k)+2 Y_2^{\varepsilon}(k) \varepsilon \},   \\
		&  a^\varepsilon(k+1)+b^\varepsilon(k+1)\leq a^\varepsilon(k)+b^\varepsilon(k) ,\, \vert a^\varepsilon(k+1)-a^\varepsilon(k) \vert \leq \varepsilon C(\Lambda,\alpha),\,\\
		& \vert b^\varepsilon(k+1)-b^\varepsilon(k) \vert \leq \varepsilon C(\Lambda,\alpha),\, \vert QR_k^\varepsilon \Delta QR_{k+1}^\varepsilon \vert \leq \varepsilon C(\Lambda,\alpha),\, c^\e(k)\leq \Lambda\\
		&c^\varepsilon(k+1) \in\{ c^\varepsilon(k) + 2X_i^\varepsilon(k) b^\varepsilon(k) - Y_i^\varepsilon(k) (2a^\varepsilon(k) -4 X_i^\varepsilon(k)\varepsilon) \quad\text{for }i=1,2 \}.
	\end{split}
\end{equation}
Setting $a^\varepsilon(t)= a^\varepsilon(\lfloor \frac{t}{\alpha\varepsilon} \rfloor)$, $b^\varepsilon(t)= b^\varepsilon(\lfloor \frac{t}{\alpha\varepsilon} \rfloor)$ and $c^\varepsilon(t)= c^\varepsilon(\lfloor \frac{t}{\alpha\varepsilon} \rfloor)$ we have that $QR_t^\varepsilon$ converges locally uniformly in time as $\varepsilon \rightarrow 0$ to a rectangle $R(t)= \left[-\frac{a(t)}{2}, \frac{a(t)}{2}\right] \times \left[-\frac{b(t)}{2}, \frac{b(t)}{2}\right]$ with $a(t)b(t)=1$. 
\end{theorem}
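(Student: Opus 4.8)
The statement splits into two parts: the inductive description of the discrete flow through the recurrence \eqref{03062023sera1}, and the $\varepsilon\to 0$ convergence to a continuum rectangular flow.

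\textbf{Iterating the incremental problem.} The plan for the first part is to apply Theorem \ref{THMesistenzadeiminimidis} step by step along the scheme of Definition \ref{def5.5}. At the $k$-th step one minimizes $\mathcal{F}_{\varepsilon}(\cdot,\,QR_{k-1}^{\varepsilon})$ over $\mathcal{SQR}_{\varepsilon}(QR_{k-1}^{\varepsilon})$; since $QR_{k-1}^{\varepsilon}$ is by construction a pseudo-axially symmetric quasi-rectangle of unit area with sides $a^{\varepsilon}(k-1)$, $b^{\varepsilon}(k-1)$, $c^{\varepsilon}(k-1)$, Theorem \ref{THMesistenzadeiminimidis} applies verbatim and produces a minimizer which is again a pseudo-axially symmetric quasi-rectangle, together with the explicit formulas for $X_i^{\varepsilon}(k)$, $Y_i^{\varepsilon}(k)$, $a^{\varepsilon}(k+1)$, $b^{\varepsilon}(k+1)$ and $c^{\varepsilon}(k+1)$ collected in \eqref{03062023sera1}. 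I would close the induction by checking that the hypotheses needed at the next step are stable: the area stays equal to $1$, the monotonicity $a^{\varepsilon}(k+1)+b^{\varepsilon}(k+1)\le a^{\varepsilon}(k)+b^{\varepsilon}(k)<\Lambda$ keeps all perimeters below $\Lambda$, and since the threshold $\varepsilon_{0}=\varepsilon_{0}(\Lambda,\alpha)$ of Theorem \ref{THMesistenzadeiminimidis} depends only on $\Lambda$ and $\alpha$, the same $\varepsilon_0$ is admissible at every $k$. The bound $c^{\varepsilon}(k)\le a^{\varepsilon}(k)\le\Lambda$ follows from the constraint $r<n$ built into Definition \ref{quasirettangolo}.

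\textbf{Uniform estimates and compactness.} For the convergence I would first turn the per-step bounds $|a^{\varepsilon}(k+1)-a^{\varepsilon}(k)|\le\varepsilon C(\Lambda,\alpha)$ and $|b^{\varepsilon}(k+1)-b^{\varepsilon}(k)|\le\varepsilon C(\Lambda,\alpha)$ into a discrete equicontinuity estimate for the piecewise constant interpolants. Since the time step is $\alpha\varepsilon$, for $0\le t<s$ the interpolant jumps at most $(s-t)/(\alpha\varepsilon)+1$ times, whence
\begin{equation*}
	|a^{\varepsilon}(t)-a^{\varepsilon}(s)|\le\Big(\frac{s-t}{\alpha\varepsilon}+1\Big)\varepsilon C(\Lambda,\alpha)=\frac{C(\Lambda,\alpha)}{\alpha}(s-t)+\varepsilon C(\Lambda,\alpha),
\end{equation*}
and likewise for $b^{\varepsilon}$; this is exactly the analogue, with $\tau=\alpha\varepsilon$, of the Lipschitz estimate used in the proof of Theorem \ref{thm12092023}. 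Together with the uniform bound $0<a^{\varepsilon}(t),b^{\varepsilon}(t)<\Lambda$, the Arzel\`a--Ascoli theorem yields a (not relabelled) subsequence with $a^{\varepsilon}\to a$ and $b^{\varepsilon}\to b$ locally uniformly in time, the limits being Lipschitz.

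\textbf{Passing to the limit.} It remains to identify the limit geometry. Writing the area constraint as $a^{\varepsilon}(k)b^{\varepsilon}(k)+\varepsilon c^{\varepsilon}(k)=1$ and using $c^{\varepsilon}(k)\le\Lambda$, the correction $\varepsilon c^{\varepsilon}$ vanishes uniformly, so passing to the limit gives $a(t)b(t)=1$ for every $t$; in particular $a(t),b(t)$ are bounded below away from $0$, since $a^\varepsilon b^\varepsilon\le 1$ and $a^\varepsilon,b^\varepsilon<\Lambda$ force $a^\varepsilon,b^\varepsilon\ge(1-\varepsilon\Lambda)/\Lambda$. Finally, since $Q_{t}^{\varepsilon}$ has area $\varepsilon c^{\varepsilon}\to 0$ and vertical side $\varepsilon\to 0$, the quasi-rectangle $QR_{t}^{\varepsilon}=R_{t}^{\varepsilon}\cup Q_{t}^{\varepsilon}$ and the rectangle $R_{t}^{\varepsilon}$ share the same limit, and the uniform convergence of the sidelengths upgrades to local uniform convergence of $QR_{t}^{\varepsilon}$ to $R(t)=[-\tfrac{a(t)}{2},\tfrac{a(t)}{2}]\times[-\tfrac{b(t)}{2},\tfrac{b(t)}{2}]$.

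\textbf{Main obstacle.} I expect the delicate point to be the propagation of the hypotheses of Theorem \ref{THMesistenzadeiminimidis} along the whole flow, rather than the compactness argument, which is routine. One must ensure that the ordering $b^{\varepsilon}(k)\le a^{\varepsilon}(k)$ (or its reverse, covered by the symmetric version of the incremental analysis) survives as the configuration approaches the square, so that the explicit minimizer of Theorem \ref{THMesistenzadeiminimidis} stays valid at every step; here the facts that each increment is $O(\varepsilon)$ and that the continuum drift pushes $a-b$ monotonically toward $0$ are what keep the iteration inside the admissible class uniformly in $\varepsilon$.
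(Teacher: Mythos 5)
Your proposal is correct and follows essentially the same route as the paper: formula \eqref{03062023sera1} is obtained by iterating Theorem \ref{THMesistenzadeiminimidis}, the per-step $O(\varepsilon)$ bounds yield the Lipschitz-type equicontinuity estimate $\vert a^{\varepsilon}(t)-a^{\varepsilon}(s)\vert\leq C(\Lambda,\alpha)\vert t-s+\varepsilon\vert$ for the interpolants, Arzel\`a--Ascoli gives locally uniform Lipschitz limits, and the unit-area property follows by passing to the limit in $a^{\varepsilon}b^{\varepsilon}+\varepsilon c^{\varepsilon}=1$ using $c^{\varepsilon}\leq\Lambda$. Your additional remarks on closing the induction and on the vanishing of $Q_t^{\varepsilon}$ only make explicit what the paper leaves implicit.
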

\begin{proof}
		Formula \eqref{03062023sera1} is a consequence of Theorem \ref{THMesistenzadeiminimidis} applyed iteratively to the sets $QR= QR_{k}^{\varepsilon}$.  By Theorem \ref{THMesistenzadeiminimidis} we have that the function $a^\varepsilon(t):= a^\varepsilon(\lfloor\frac{t}{\varepsilon}\rfloor)$ and $b^\varepsilon(t)= b^\varepsilon(\lfloor\frac{t}{\varepsilon}\rfloor)$ satisfy 
$$
\vert a^\varepsilon(t)-a^\varepsilon(s)\vert\leq C(\Lambda,\alpha)\vert t-s+\e\vert,\quad
\vert b^\varepsilon(t)-b^\varepsilon(s)\vert\leq C(\Lambda,\alpha)\vert t-s+\e\vert
$$
for all $t,s>0$. Therefore there exist Lipschitz functions $a$ and $b$ obtained as locally uniform limit of $a^\varepsilon$ and $b^\varepsilon$. The fact that the limit rectangle has unitary area follows by passing to the limit as $\e\to 0$ in the area constraint formula
\begin{equation}
a^\e(t)b^\e(t)+\e c^\e(t)=1.
\end{equation}
\end{proof}
\begin{remark}\label{conjectured-flow}
It is not clear if one can provide a differential inclusion for the motion of the limit rectangle without assuming additional assumptions on the convergence of the reminders $t\mapsto c^\e(t)$. If one assumes the existence of the limit $c(t):= \lim_{\varepsilon \rightarrow 0} c^\varepsilon(t)$ locally uniformly in time (according to the Ascoli-Arzel\'a theorem this happens if $c^\e$ has a $\e$-uniform modulo of continuity on all compact subsets of $[0,+\infty)$), passing to the limit in \eqref{03062023sera1} as $\e\to 0$ one can prove that the following system of differential inclusions holds true. We set $x(t)= \frac{2\alpha}{b(t)}-\frac{4\alpha+a(t)}{a(t)+b(t)}$. If $ x(t) \notin \N $ then
 \begin{equation}
 	\begin{cases}\displaystyle
 	\frac{d a}{d t}  \in -\frac{2}{\alpha} \left[ \left\lfloor x(t)\right\rfloor \left\lceil x(t)  \right\rceil \right], \\ \\ \displaystyle
 	\frac{d b}{d t}  \in \frac{c(t)}{\alpha\, a(t)}+\frac{2b(t)}{\alpha\, a(t)} \left[ \left\lfloor x(t)\right\rfloor \left\lceil x(t)  \right\rceil \right],
 \end{cases}
 \end{equation}
If $ x(t) \in \N $ then
\begin{equation}
	\begin{cases}\displaystyle
	\frac{d a}{d t}  \in -\frac{2}{\alpha} \left[ x(t)+1, x(t)-1\right], \\ \\ \displaystyle
 	\frac{d b}{d t}  \in \frac{c(t)}{\alpha\, a(t)}+\frac{2b(t)}{\alpha\, a(t)}  \left[ x(t)-1, x(t)+1\right],
\end{cases}
\end{equation}
\end{remark}
In what follows we overcome the existence problem pointed out in the previous remark introducing a different notion of approximate flat solution, that we call rectangle approximate flat solution. The algorithm we propose does not keep the area fixed at each time step, but still produces an area-preserving flow in the limit as $\e\to 0$.
\begin{definition}\label{def05022024}
	Let $\{R^\varepsilon\}_{\varepsilon \in (0,1)} $ by a family of rectangles with $\vert R^\varepsilon \vert =1$ such that $ R^\varepsilon \rightarrow \left[-\frac{a}{2}, \frac{a}{2}\right] \times \left[-\frac{b}{2}, \frac{b}{2}\right]$ as $\varepsilon \rightarrow 0$ with respect to the Hausdorff distance. For all $\varepsilon\in(0,1)$ let $\{ R_{k}^{\varepsilon}\}_{k \in \mathbb{N}}$ be a family of rectangles defined iteratively according to the following scheme. Set $QR_0^{\varepsilon}=R^\varepsilon$ and define
	\begin{equation}\label{05022024pom1}
	R_k^\varepsilon :  \quad Q^\varepsilon_k \cup R^\varepsilon_k=QR_{k }^{\varepsilon} \in \underset{QR \in \mathcal{SQR}_{\varepsilon}(QR_{k-1}^{\varepsilon})}{\arg \min} \left\{\mathcal{F}_{\varepsilon}(QR,\,QR_{k-1}^{\varepsilon})\right\} \quad k\geq 1.
	\end{equation}   
	We finally set  
	$$ R_{t}^{\varepsilon}:= R_{k }^{\varepsilon} \quad \text{for any } t \in [k \alpha\varepsilon,\, (k+1)\alpha\varepsilon)$$
and for all $\varepsilon$ we call $\{R_t^{\varepsilon}\}_{t \geq 0}$ a rectangular approximate flat solution of the area-preserving  mean-curvature flow in the lattice $\varepsilon\mathbb{Z}^2$ with initial datum $R_{\varepsilon}$. 
\end{definition}
\begin{theorem}
	Let $R$ be a rectangle with horizontal sidelenght $a_0$ and vertical sidelenght $b_0$ and such that $a_0 b_0=1$ and $b_0 < a_0$ and $a_0+b_0<\Lambda$.  For any $ 0<\varepsilon< \varepsilon_0$ ($\varepsilon_0$ as in Theorem \ref{THMesistenzadeiminimidis} for $QR= QR_{\varepsilon}$), let $ \{R_t^{\varepsilon}\}_{t\geq 0}$ be a rectangular approximate flat
	solution to the area-preserving mean-curvature flow in the lattice $\varepsilon \mathbb{Z}^2$ with initial datum $R_{\varepsilon}$ defined according to Definition \eqref{def05022024}.
	Then for all $k\geq 0$ we have 
	$ R_t^{\varepsilon}= \left[-\frac{a^{\varepsilon}(k)}{2}, \frac{a^{\varepsilon}(k)}{2}\right] \times \left[-\frac{b^{\varepsilon}(k)}{2}, \frac{b^{\varepsilon}(k)}{2}\right] + \mathrm{Bar}(R^{\varepsilon})$  for all $ t \in [k\alpha\varepsilon,(k+1)\alpha\varepsilon) $.
	Here $a^{\varepsilon}(k)$ and $ b^{\varepsilon}(k)$ are obtained by recurrence starting from $a^\varepsilon(0), b^\varepsilon(0)$ being the horizontal and the vertical sidelenght of $R_{\varepsilon}$, respectively.  For all $k\geq 0$ it holds
	\begin{equation}\label{03062023sera1asd}
		\begin{split}
			& X_1^\varepsilon(k)= \lfloor x^{\varepsilon}(k) \rfloor \text{, } X_2^{\varepsilon}(k)=\lceil x^{\varepsilon}(k) \rceil   \\
			& x^\varepsilon(k)=\frac{2\alpha(-b^{\varepsilon}(k)+a^{\varepsilon}(k))}{b^{\varepsilon}(k)(a^\varepsilon(k)+b^\varepsilon(k))}-\frac{a^\varepsilon(k)}{a^\varepsilon(k)+b^\varepsilon(k)},\\
			& 
			Y_1^\varepsilon(k) = \bigg\lfloor \frac{  2  b^{\varepsilon}(k) \lfloor x^{\varepsilon}(k) \rfloor }{ 2 a^\varepsilon(k) -4 \varepsilon \lfloor x^{\varepsilon}(k) \rfloor } \bigg\rfloor, Y_2^\varepsilon(k) = \bigg\lfloor \frac{  2  b^\varepsilon(k) \lceil
				x^{\varepsilon}(k) \rceil }{ 2 a^\varepsilon(k) -4 \varepsilon \lceil x^{\varepsilon}(k) \rceil } \bigg\rfloor, \\
			&
			a^\varepsilon(k+1)\in \{a^\varepsilon(k)-2\e X_1^{\varepsilon}(k),a^\varepsilon(k)-2\e X_2^{\varepsilon}(k) \}, \\
			& b^\varepsilon(k+1)\in \{ b^\varepsilon(k)+2 Y_1^{\varepsilon}(k) \varepsilon, b^\varepsilon(k)+2 Y_2^{\varepsilon}(k) \varepsilon \},   \\
			&  a^\varepsilon(k+1)+b^\varepsilon(k+1)\leq a^\varepsilon(k)+b^\varepsilon(k) ,\, \vert a^\varepsilon(k+1)-a^\varepsilon(k) \vert \leq \varepsilon C(\Lambda,\alpha),\,\\
			& \vert b^\varepsilon(k+1)-b^\varepsilon(k) \vert \leq \varepsilon C(\Lambda,\alpha),\, \vert QR_k^\varepsilon \Delta QR_{k+1}^\varepsilon \vert \leq \varepsilon C(\Lambda,\alpha),\, c^\e(k)\leq \Lambda.
		\end{split}
	\end{equation}
	Setting $a^\varepsilon(t)= a^\varepsilon(\lfloor \frac{t}{\alpha\varepsilon} \rfloor)$, $b^\varepsilon(t)= b^\varepsilon(\lfloor \frac{t}{\alpha\varepsilon} \rfloor)$  we have that $R_t^\varepsilon$ converges locally uniformly in time as $\varepsilon \rightarrow 0$ to a rectangle $R(t)= \left[-\frac{a(t)}{2}, \frac{a(t)}{2}\right] \times \left[-\frac{b(t)}{2}, \frac{b(t)}{2}\right]$ with $a(t)b(t)=1$. Moreover the following system of differential inclusion holds true. Setting $x(t)= \frac{2\alpha}{b(t)}-\frac{4\alpha+a(t)}{a(t)+b(t)}$, if $ x(t) \notin \N $ then
 \begin{equation}\label{05022024pom2asd}
 	\begin{cases}\displaystyle
 	\frac{d a}{d t}  \in -\frac{2}{\alpha} \left[ \left\lfloor x(t)\right\rfloor \left\lceil x(t)  \right\rceil \right], \\ \\ \displaystyle
 	\frac{d b}{d t}  \in \frac{2b(t)}{\alpha\, a(t)} \left[ \left\lfloor x(t)\right\rfloor \left\lceil x(t)  \right\rceil \right],
 \end{cases}
 \end{equation}
if $ x(t) \in \N $ then
\begin{equation}\label{05022024pom3}
	\begin{cases}\displaystyle
	\frac{d a}{d t}  \in -\frac{2}{\alpha} \left[ x(t)+1, x(t)-1\right], \\ \\ \displaystyle
 	\frac{d b}{d t}  \in \frac{2b(t)}{\alpha\, a(t)}  \left[ x(t)-1, x(t)+1\right],
\end{cases}
\end{equation}
%
%
%
%
\end{theorem}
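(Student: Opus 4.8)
The plan is to mimic, step by step, the proof of Theorem~\ref{theorem:approxflat-eps}, iterating the one-step minimization result Theorem~\ref{THMesistenzadeiminimidis}, and then to extract from the resulting recurrence the systems of differential inclusions \eqref{05022024pom2asd}--\eqref{05022024pom3}; this last passage to the limit is the genuinely new ingredient. First I would observe that $QR_0^\varepsilon=R^\varepsilon$ is a rectangle and that, by \eqref{05022024pom1}, at each step one minimizes $\mathcal{F}_\varepsilon(\,\cdot\,,QR_{k-1}^\varepsilon)$ over $\mathcal{SQR}_\varepsilon(QR_{k-1}^\varepsilon)$ and keeps, as the reported flow, the rectangular component $R_k^\varepsilon$ of the minimizer $QR_k^\varepsilon$. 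Thus Theorem~\ref{THMesistenzadeiminimidis} applies at every step and directly yields the rectangular shape of $R_t^\varepsilon$, the explicit selectors $X_i^\varepsilon(k),Y_i^\varepsilon(k)$, and the one-step estimates $\vert a^\varepsilon(k+1)-a^\varepsilon(k)\vert\le\varepsilon C(\Lambda,\alpha)$, $\vert b^\varepsilon(k+1)-b^\varepsilon(k)\vert\le\varepsilon C(\Lambda,\alpha)$, $a^\varepsilon(k+1)+b^\varepsilon(k+1)\le a^\varepsilon(k)+b^\varepsilon(k)$ and $c^\varepsilon(k)\le\Lambda$, i.e.\ \eqref{03062023sera1asd}. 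The monotonicity of $a^\varepsilon+b^\varepsilon$ keeps the trajectory inside $\{a+b\le\Lambda\}$, so all constants delivered by Theorem~\ref{THMesistenzadeiminimidis} stay uniform along the whole evolution.

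From these one-step bounds the piecewise-constant interpolants satisfy $\vert a^\varepsilon(t)-a^\varepsilon(s)\vert\le C(\Lambda,\alpha)\vert t-s+\varepsilon\vert$ and the analogous bound for $b^\varepsilon$, exactly as in the proof of Theorem~\ref{theorem:approxflat-eps}. Hence $\{a^\varepsilon\},\{b^\varepsilon\}$ are asymptotically equi-Lipschitz and locally bounded, and Ascoli--Arzel\`a gives, up to a subsequence, locally uniform limits $a,b$ which are Lipschitz. To identify the area of the limit I would carry along the bookkeeping identity $a^\varepsilon(t)\,b^\varepsilon(t)+\varepsilon\,c^\varepsilon(t)=1$, which follows from $\vert QR_k^\varepsilon\vert=1$ (the area constraint is built into $\mathcal{SQR}_\varepsilon$) together with the fact that the rectangular part $R_k^\varepsilon$ carries area $a^\varepsilon(k)b^\varepsilon(k)$ while the tail $Q_k^\varepsilon$ carries area $\varepsilon\,c^\varepsilon(k)$; since $c^\varepsilon(t)\le\Lambda$, the term $\varepsilon\,c^\varepsilon(t)$ vanishes uniformly on compact time intervals and therefore $a(t)b(t)=1$.

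For the inclusions, I would first pass to the limit in the recurrence for $a^\varepsilon$. Writing $a^\varepsilon(k+1)-a^\varepsilon(k)=-2\varepsilon X^\varepsilon(k)$ with $X^\varepsilon(k)\in\{\lfloor x^\varepsilon(k)\rfloor,\lceil x^\varepsilon(k)\rceil\}$ and $\tau=\alpha\varepsilon$, the difference quotient of $a^\varepsilon$ over an interval equals $-\tfrac{2}{\alpha}$ times a Ces\`aro average of the integers $X^\varepsilon(k)$. Because $x^\varepsilon(k)$ is the continuous expression of $a^\varepsilon(k),b^\varepsilon(k)$ in \eqref{03062023sera1asd} and converges locally uniformly to $x(t)=\tfrac{2\alpha}{b(t)}-\tfrac{4\alpha+a(t)}{a(t)+b(t)}$, for small $\varepsilon$ each $X^\varepsilon(k)$ lies in $\{\lfloor x(t)\rfloor,\lceil x(t)\rceil\}$ when $x(t)\notin\N$ and in $\{x(t)-1,x(t),x(t)+1\}$ when $x(t)\in\N$; any limit of the averages then belongs to $[\lfloor x(t)\rfloor,\lceil x(t)\rceil]$, respectively $[x(t)-1,x(t)+1]$, which is the first line of \eqref{05022024pom2asd}, respectively \eqref{05022024pom3}. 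Rather than passing to the limit in the floored quantity $Y_i^\varepsilon$ --- which is what forces the uncontrolled tail term $c(t)/(\alpha a(t))$ into the conjectural system of Remark~\ref{conjectured-flow} --- I would deduce the equation for $b$ from the area identity: differentiating $a(t)b(t)=1$ gives $\tfrac{db}{dt}=-\tfrac{b(t)}{a(t)}\tfrac{da}{dt}$ at a.e.\ $t$, and substituting the inclusion for $\tfrac{da}{dt}$ with the same selection yields $\tfrac{db}{dt}\in\tfrac{2b(t)}{\alpha a(t)}[\lfloor x(t)\rfloor,\lceil x(t)\rceil]$, respectively $\tfrac{2b(t)}{\alpha a(t)}[x(t)-1,x(t)+1]$. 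This is precisely the mechanism by which the rectangular flow closes into a genuine system of inclusions, in contrast with Theorem~\ref{theorem:approxflat-eps}.

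The hard part will be the rigorous description of the set-valued limit at the exceptional times with $x(t)\in\N$: there the selector $X^\varepsilon(k)$ can oscillate among three consecutive integers, so the admissible velocity set widens from $[\lfloor x\rfloor,\lceil x\rceil]$ to $[x-1,x+1]$, and one must show both that it widens no further and that the whole interval is attainable. I expect this to require a quantitative stability analysis of the sign of the polynomial $f(\varepsilon,\cdot)$ near its relevant root, of the type already carried out in the proof of Theorem~\ref{THMesistenzadeiminimidis}, together with the observation that $\{t:x(t)\in\N\}$ is, for the limit trajectory, either a discrete set or an interval on which $x$ is constant. Since the right-hand sides are merely upper semicontinuous multifunctions, no uniqueness is expected, so I would only claim existence of a limit flow solving the inclusions; the explicit pinning condition of Remark~\ref{ultimopomeriggio} would then follow by specializing to the case in which $0$ belongs to the right-hand side.
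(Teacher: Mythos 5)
Your proposal is correct and follows the same overall structure as the paper's proof: iterate Theorem \ref{THMesistenzadeiminimidis} to get \eqref{03062023sera1asd}, use the uniform one-step bounds to obtain asymptotically equi-Lipschitz interpolants and hence Lipschitz limits $a,b$, and recover $a(t)b(t)=1$ from the identity $a^\e(t)b^\e(t)+\e c^\e(t)=1$ together with $c^\e(t)\le\Lambda$. Where you go beyond the paper is in the last step: the paper disposes of \eqref{05022024pom2asd}--\eqref{05022024pom3} with a one-line ``passing to the limit in \eqref{03062023sera1asd}'', while you spell out the mechanism (difference quotients of $a^\e$ as Ces\`aro averages of the integer selectors $X^\e(k)$, with the admissible set widening to three consecutive integers when $x(t)\in\N$). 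Your choice to derive the $b$-inclusion from differentiating $a(t)b(t)=1$ rather than from the recurrence for $Y_i^\e$ is a genuine (small) improvement: a direct limit of the floored quantities $Y_i^\e=\bigl\lfloor 2b^\e\lfloor x^\e\rfloor/(2a^\e-4\e\lfloor x^\e\rfloor)\bigr\rfloor$ would only give a slightly larger velocity set than $\frac{2b}{\alpha a}\left[\lfloor x\rfloor,\lceil x\rceil\right]$, so the area identity is in fact the cleanest way to match the stated inclusion. Your closing worry about whether the whole interval $[x-1,x+1]$ is \emph{attained} is not needed: the theorem only asserts membership of the derivative in that set, not surjectivity, so no further analysis of the exceptional times is required beyond upper semicontinuity of the selector.
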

\begin{proof}
	By the definition \ref{def05022024} we have that the formula \eqref{03062023sera1asd} is a consequence of Theorem \ref{THMesistenzadeiminimidis} applyed iteratively to the sets $QR= QR_{k}^{\varepsilon}$.  By Theorem \ref{THMesistenzadeiminimidis} we have that the function $a^\varepsilon(t):= a^\varepsilon(\lfloor\frac{t}{\varepsilon}\rfloor)$ and $b^\varepsilon(t)= b^\varepsilon(\lfloor\frac{t}{\varepsilon}\rfloor)$ satisfy 
	$$
	\vert a^\varepsilon(t)-a^\varepsilon(s)\vert\leq C(\Lambda,\alpha)\vert t-s+\e\vert,\quad
	\vert b^\varepsilon(t)-b^\varepsilon(s)\vert\leq C(\Lambda,\alpha)\vert t-s+\e\vert
	$$
	for all $t,s>0$. Therefore there exist Lipschitz functions $a$ and $b$ obtained as locally uniform limit of $a^\varepsilon$ and $b^\varepsilon$. Let
$c^\varepsilon(t):= \mathrm{P}_1(Q_k^\varepsilon)$ for all $ t \in [k\alpha \varepsilon, (k+1)\alpha \varepsilon) $ where $Q_k^\varepsilon$ is define in \eqref{05022024pom1} and $\mathrm{P}_1$ is the function of the definition \eqref{P12}.
The fact that the limit rectangle has unitary area is a consequence of
$a^\e(t)b^\e(t)+\e c^\e(t)=1$ and then
\begin{equation}
	a^\e(t)b^\e(t)=\lim_{\varepsilon \rightarrow 0} a^\e(t)b^\e(t)=	\lim_{\varepsilon \rightarrow 0}a^\e(t)b^\e(t)+\e c^\e(t)=1
\end{equation}
where we have used that $c^\varepsilon (t) \leq \Lambda$. In the end passing to the limit in \eqref{03062023sera1asd} as $\e\to 0$ one can prove \eqref{05022024pom2asd} and \eqref{05022024pom3}.
\end{proof}

\begin{remark}\label{ultimopomeriggio}
It is worth observing that our system of differential inclusions never reduces to a single system of ODEs. As a result, the pinning phenomenon described in \cite{BGN}, according to which the flow is given by the family of sets identically equal to the initial datum, can only be one of the possible motions. This phenomenon is characterized by the following condition on the initial data $a$ and $b$
\begin{eqnarray*}
0\leq x(0)<1 \text{ and }a\cdot b=1\iff \frac{b^3}{2(1-b^2)}<\alpha.
\end{eqnarray*}
\end{remark}



\end{document}